\theoremstyle{plain}
\newtheorem*{thrm}{Theorem}
\newtheorem{thm}{Theorem}[section]
\newtheorem{prop}[thm]{Proposition}
\newtheorem{cor}[thm]{Corollary}
\newtheorem{lemma}[thm]{Lemma}
\newtheorem*{conj}{Conjecture}
\theoremstyle{definition}
\newtheorem{defn}[thm]{Definition}
\newtheorem{defns}[thm]{Definitions}
\theoremstyle{remark}
\newtheorem{remark}[thm]{Aside}
\newtheorem{eg}[thm]{Example}
\newcommand{\proofof}[1]{\end{#1}\begin{proof}}
\renewcommand\section{\@startsection {section}{1}{\z@}%
  {-3.5ex \@plus -1ex \@minus -.2ex}{2.3ex \@plus.2ex}%
  {\normalfont\large\bfseries}}
\renewcommand\subsection{\@startsection{subsection}{2}{\z@}%
  {-3.25ex\@plus -1ex \@minus -.2ex}{1.5ex \@plus .2ex}%
  {\normalfont\bfseries}}
\newcommand{\lie}[1]{\mathfrak{#1}}
\newcommand{\sh}[1]{\mathcal{#1}}
\newcommand{\N}{{\mathbb N}}
\newcommand{\Z}{{\mathbb Z}}
\newcommand{\Q}{{\mathbb Q}}
\newcommand{\R}{{\mathbb R}}
\newcommand{\C}{{\mathbb C}}
\newcommand{\F}{{\mathbb F}}
\newcommand{\G}{{\mathbb G}}
\renewcommand{\P}{{\mathbb P}}
\newcommand{\A}{{\mathbb A}}
\newcommand{\tens}{\mathbin{\otimes}}
\newcommand{\Hom}{\mathrm{Hom}}
\newcommand{\GL}{\mathrm{GL}}
\newcommand{\SL}{\mathrm{SL}}
\newcommand{\Aff}{\mathrm{Aff}}
\DeclareMathOperator*\colim{colim}
\DeclareMathAlphabet{\mathrmsl}{OT1}{cmr}{m}{sl}
\newcommand{\rssymb}[2]{\newcommand{#1}{\mathrmsl{#2}} }
\newcommand{\oper}[3][n]{\newcommand{#2}{\mathop{\mathrm{#3}}%
\ifx n#1\nolimits\else\limits\fi} }
\newcommand{\rsoper}[3][n]{\newcommand{#2}{\mathop{\mathrmsl{#3}}%
\ifx n#1\nolimits\else\limits\fi} }
\oper\Ad{Ad}
\oper\ad{ad}
\oper\val{val}
\oper\coker{coker}
\oper\mult{mult}
\oper\Iso{Iso}
\oper\End{End}
\oper\Aut{Aut}
\oper\Sub{Sub}
\oper\Alt{Alt}
\oper\Ext{Ext}
\oper\Pic {Pic}
\oper\Sym{Sym}
\oper\Spec{Spec}
\oper\Spf{Spf}
\oper\Sp{Sp}
\oper\Spa{Spa}
\oper\Proj{Proj}
\rsoper\divg{div}
\rsoper{\sym}{sym}
\rsoper{\alt}{alt}
\rsoper\trace{tr}
\rssymb\id{id}
\newcommand{\thismonth}{\ifcase\month\or
  January\or February\or March\or April\or May\or June\or
  July\or August\or September\or October\or November\or December\fi
  \space\number\year}
\newcommand{\Poly}{\mathbf{Poly}}
\newcommand{\Alg}{\mathrm{Alg}}
\newcommand{\Pro}{\mathrm{Pro}}
\newcommand{\Sh}{\mathrm{Sh}}
\newcommand{\rigless}{\setminus\hspace{-5pt}\setminus}
\title{Affine manifolds are rigid analytic spaces in characteristic one \\ II: Analytic geometry and overconvergence}
\author{Andrew W. Macpherson}
\begin{document}

\maketitle
\begin{abstract}I extend the framework of rigid analytic geometry to the setting of algebraic geometry relative to monoids, and study the associated notions of separated, proper, and overconvergent morphisms.

The category of affine manifolds embeds as a subcategory defined by simple algebraic (normal) and topological (overconvergent) criteria. The affine manifold of a rigid space can be recovered either as a set of `Novikov field' points or as a universal Hausdorff quotient. After base change to any topological field, one obtains a `toric' analytic space that fibres over the affine manifold.
\end{abstract}

\tableofcontents

\section{Introduction}
The idea that degenerations of complex manifolds can be studied using torus fibrations over an affine manifold originates in the work of Hitchin \cite{Hitchin} and the SYZ conjecture \cite{SYZ} in mirror symmetry. In \cite{GrSi1}, the authors of the Gross-Siebert programme made this notion precise using logarithmic geometry. 

The purpose of this paper is to begin the development of the same ideas instead in the setting of \emph{non-Archimedean} geometry, as proposed in \cite{KoSo2}. Our approach will be, in continuation of the methods of part I \cite{part1}, to define a version of rigid analytic geometry purely in terms of multiplicative monoids. The punchline of the paper is that the resulting category actually \emph{includes} the category of affine manifolds; the monomials in the co-ordinate monoids are, symbolically, the exponentials of the affine functions.

In fact, the Raynaud-style approach to rigid analytic geometry taken here is sufficiently modular that the theory we obtain is strictly a generalisation of ordinary rigid analytic geometry over $\Z$: the latter can be recovered by simply plugging in the category of rings where, in this paper, we put the category of monoids with zero. The same is true for the theory of overconvergence introduced in \cite{part1} and continued in this paper. For a more detailed discussion of these features, see \S\ref{STRUCT} and \cite[\S\zref{I_STRUCT}]{part1}.

This modularity also allows us to obtain a family of well-behaved base change functors from the category of analytic spaces over a `valuation $\F_1$-field' to any topological field, parametrised by the open unit disc of that field.

\subsection*{Collages}
Let $\Delta\subseteq N$ be a rational, strongly convex polyhedron in a $\Z$-affine space. In \S\ref{POLY}, we make the elementary observation that the structures carried by the monoid $\Aff_\Delta(N,\Z)$ of affine functions on $N$ that are bounded above on $\Delta$ are essentially the data of a \emph{normal Banach algebra} $\F_1(\!(t)\!)\{\Delta\}$ of finite type over the discrete valuation $\F_1$-field $\F_1(\!(t)\!)$. 

This produces an invertible correspondence
\[ \{\text{polyhedra}\} \quad\leftrightarrow\quad \{\text{normal affine rigid analytic spaces of finite type over $\F_1(\!(t)\!)$}\}. \]
After base changing to a non-Archimedean field $K$ with uniformiser $t$, we obtain a rigid analytic space $\Spec K\{\Delta\}$ which, in usual terminology, is a \emph{rational domain} in affine space with polyhedron of convergence given by $\Delta$. It also carries a natural action by a unitary group $\mathrm{U}_1\tens N$.

Any normal analytic space, locally of finite type over $\F_1(\!(t)\!)$ is therefore obtained by glueing together the spectra of various algebras of the form $\F_1(\!(t)\!)\{\Delta\}$. The global combinatorial object must be obtained by glueing together overlapping embedded polyhedra. I have called these objects \emph{collages} in embedded polyhedra. The immediate conclusion is then (cf. \cite[cor. \zref{I_PFAN_EQUIVALENCE}]{part1}):
\begin{thrm}[\ref{AFF_COLL_THM}]The convergence complex contruction induces an equivalence 
\[ \Delta_{-/\Z}: \mathbf{Rig}_{\F_1(\!(t)\!)}^\mathrm{ltf/n/nb}\tilde\longrightarrow \mathrm{C}\mathbf{Poly}_\Z^N \]
between the category of normal rigid analytic spaces locally of finite type over $\F_1(\!(t)\!)$ and the category of collages in embedded lattice polyhedra.

A family of open subsets $U_i\subseteq X$ is a covering if and only if on every polyhedron $\Delta$ of $\Delta_{X/\Z}$ there is a finite refinement such that $\Delta(\Q)=\bigcup_i\Delta\cap\Delta_{U_i/\Z}(\Q)$.\end{thrm}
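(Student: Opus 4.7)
The approach is to bootstrap the affine correspondence $\Delta \leftrightarrow \F_1(\!(t)\!)\{\Delta\}$ established in \S\ref{POLY} to a global equivalence by gluing, in direct parallel to the analogous result \cite[cor.~\zref{I_PFAN_EQUIVALENCE}]{part1} for $\F_1$-schemes from part~I. The category $\mathrm{C}\mathbf{Poly}_\Z^N$ is engineered precisely to record the combinatorial data of overlapping embedded polyhedra, so what needs proof is that this combinatorial structure encodes exactly the analytic gluing data.

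To define $\Delta_{-/\Z}$ I would use the convergence complex construction: choose an affine cover of $X$ by pieces of the form $\Spec \F_1(\!(t)\!)\{\Delta_i\}$ and assemble the $\Delta_i$, together with the embedded polyhedra attached to their pairwise overlaps, into a collage. Well-definedness on objects requires showing that such affine pieces form a basis for the Grothendieck topology on $\mathbf{Rig}_{\F_1(\!(t)\!)}^{\mathrm{ltf/n/nb}}$, which should follow from the locally-of-finite-type and normality hypotheses, and that passage to a refinement of the affine cover produces a canonically equivalent collage.

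For morphisms and the inverse I would use that a morphism of rigid spaces, locally on source and target, is the same as a morphism of normal affine Banach algebras of the form $\F_1(\!(t)\!)\{\Delta\} \to \F_1(\!(t)\!)\{\Delta'\}$, which by the affine equivalence is precisely the data of an affine map of embedded lattice polyhedra compatible with the overconvergence datum. Patching these together produces the morphism of collages. Conversely, any collage assembles via its defining diagram into a rigid space by taking spectra of the associated algebras and glueing along the prescribed overlaps; local-finiteness and normality of the result are inherited from the affine building blocks. That the two functors are mutually inverse reduces to the affine case already handled.

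The covering criterion then follows from the same local analysis: a family $\{U_i\to X\}$ is a covering iff its restriction to each affine piece $\Spec \F_1(\!(t)\!)\{\Delta\}$ is a covering, and (by the affine correspondence and the description of covers of rational domains as polyhedral subdivisions) this holds exactly when $\Delta(\Q)$ is covered by the family $\{\Delta\cap\Delta_{U_i/\Z}(\Q)\}$ in a finitely-refinable manner. The main technical obstacle I anticipate is verifying that the overconvergence and normality ("nb") conditions match the combinatorial setup correctly: one must check that the Banach-algebra structure on $\F_1(\!(t)\!)\{\Delta\}$ sees the whole polyhedron $\Delta$ rather than only its relative interior, and that the $\Q$-point covering condition genuinely characterises admissible covers in the rigid Grothendieck topology rather than merely set-theoretic surjectivity.
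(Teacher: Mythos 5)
Your proposal takes essentially the same route as the paper: bootstrap the affine adjunction from \S\ref{POLY} and the beginning of \S\ref{AFF} to a global equivalence by matching the Grothendieck topologies on the two sides. The one step you gesture at but do not actually establish is the paper's lemma \ref{AFF_COVERING}, which is the crux: it is proved by choosing a relatively normal formal model $X^+$ on which each $U_i$ is an open immersion and translating the covering condition into the punctured cone complex $\Sigma_{X^+}$, where surjectivity on $\Q H$-rational points is exactly the statement that $\Sigma_{X^+}$ is a union of the subfans $\Sigma_{U_i^+}$ --- your invocation of ``the description of covers of rational domains as polyhedral subdivisions'' is correct in spirit but this reduction to models and cone complexes is what actually makes it go. A minor point: the phrase ``compatible with the overconvergence datum'' in your morphism discussion should be ``compatible with the ring of integers'' (i.e.\ with $\Aff^+_\Delta$); overconvergence in this paper is a separate notion used for theorem \ref{AFF_AFF}, not a hypothesis here.
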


A $\Z$-affine manifold has a natural notion of embedded lattice polyhedron. Using this, one can easily realise affine manifolds as particularly nice collages, and therefore as rigid analytic spaces. Thus we reach the title result of the series:

\begin{thrm}[\ref{AFF_AFF}]The category of $\Z$-affine manifolds embeds as the full subcategory of the category of rigid analytic spaces over $\F_1(\!(t)\!)$ whose objects are boundaryless, overconvergent, and locally of finite type. Affine open subsets of the rigid space correspond to compact polyhedra inside the affine manifold.\end{thrm}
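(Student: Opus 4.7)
The plan is to leverage Theorem~\ref{AFF_COLL_THM}, which identifies normal rigid analytic spaces locally of finite type over $\F_1(\!(t)\!)$ with collages in embedded lattice polyhedra, and then show that among these the boundaryless overconvergent ones are exactly those arising from $\Z$-affine manifolds.

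First I would define the functor in the natural direction. To a $\Z$-affine manifold $M$, I assign the collage $\Delta_{M/\Z}$ whose constituent polyhedra are the compact embedded lattice polyhedra of $M$, with coverings given by families of such polyhedra exhausting $M$. Functoriality in $M$ is routine since morphisms of affine manifolds send embedded lattice polyhedra into embedded lattice polyhedra. Composing with a quasi-inverse of Theorem~\ref{AFF_COLL_THM} then yields a functor $M\mapsto X_M$ into normal rigid analytic spaces locally of finite type over $\F_1(\!(t)\!)$. I would next verify that $X_M$ is boundaryless and overconvergent: at every point of $M$, an ambient manifold chart allows one to enlarge any polyhedron in all directions, so the collage admits no `non-removable' boundary face, which is the collage-level translation of boundarylessness; and any compact polyhedron of $M$ sits in the topological interior of a slightly larger compact polyhedron, which furnishes overconvergence in the sense of part~I.

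Full faithfulness then reduces, via Theorem~\ref{AFF_COLL_THM}, to the statement that a morphism of collages each of which comes from an affine manifold is induced by a unique morphism of the underlying manifolds. I would build this chart-by-chart, using that an affine-linear map from one lattice polyhedron into another is uniquely determined by its associated monoid morphism on $\F_1(\!(t)\!)\{\Delta\}$. The correspondence between affine opens of $X_M$ and compact polyhedra of $M$ is then immediate: by the construction above, individual polyhedra of $\Delta_{M/\Z}$ are precisely the compact embedded lattice polyhedra of $M$, and under the equivalence of Theorem~\ref{AFF_COLL_THM} these match the affine opens of $X_M$.

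The main obstacle is essential surjectivity. Given a boundaryless, overconvergent, locally of finite type collage $C$, I need to build an affine manifold $M_C$ realising it. The underlying set I would construct either as the universal Hausdorff quotient alluded to in the abstract, or equivalently as the space of Novikov-field points of the corresponding rigid space; the collage then furnishes an atlas of polyhedral charts on $M_C$, boundarylessness ensures every point is interior to some chart, and overconvergence ensures that transition maps extend to open neighbourhoods of their domains of definition. The delicate step will be showing that the transition functions are genuinely $\Z$-affine rather than merely piecewise-affine; this should be, at bottom, a rigidity statement that any morphism between the coordinate monoid algebras $\F_1(\!(t)\!)\{\Delta\}$ and $\F_1(\!(t)\!)\{\Delta'\}$ restricts on lattice points to an affine-linear map, because it must preserve both the multiplicative and valuation-filtration structures. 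Once this is established, a cofinality argument shows that $\Delta_{M_C/\Z}$ recovers $C$ as a collage, and hence by Theorem~\ref{AFF_COLL_THM} as a rigid analytic space.
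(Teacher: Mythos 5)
Your plan follows the same broad route as the paper: reduce to collages via Corollary~\ref{AFF_COLL_THM}, then identify the image of affine manifolds among boundaryless, overconvergent, locally of finite type objects. The paper packages the second step as a spatial equivalence of topoi $\Sh\Poly_H^{\not\partial/\mathrm{sur}}\simeq\Sh\mathbf{Aff}_H$, whereas you argue essential surjectivity directly; that difference is cosmetic.

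But there are two genuine misdiagnoses in the essential-surjectivity step. First, the step you flag as ``delicate'' --- that transition functions are $\Z$-affine rather than merely piecewise-affine --- is a non-issue: a morphism in $\mathbf{Poly}^N_H$ is \emph{by definition} an affine map of the ambient affine spaces, so the atlas furnished by a collage is automatically affine. What actually requires work is something you treat as routine: that the polyhedral pieces of a boundaryless overconvergent collage patch into a topological \emph{manifold}, i.e.\ that every point of the realization has a Euclidean open neighbourhood with an affine chart. The paper proves this via Proposition~\ref{AFF_SUR}: a locally compact collage is overconvergent if and only if its developing map $\delta:\Delta\to N$ is an overconvergent-local homeomorphism. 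This is the real content, and your sketch does not reproduce it; ``overconvergence ensures transitions extend to open neighbourhoods'' is the statement to be proved, not an observation.

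Second, you misassign the division of labour between the two hypotheses. You write that ``boundarylessness ensures every point is interior to some chart,'' but the paper explicitly notes, just after the definition of boundaryless, that this condition ``does not say anything about the topological boundary of $\Delta(\R)$.'' Boundarylessness amounts to the collage being built from \emph{bounded} polyhedra (equivalently, that the affine co-ordinate algebras are locally convex $\F_1$-\emph{fields}), and in particular implies normality; the interiority/open-neighbourhood statement is entirely a consequence of overconvergence via the developing map criterion. With these two corrections --- dropping the spurious piecewise-affine worry, and invoking the developing-map criterion where you currently gesture at overconvergence --- your argument aligns with the paper's.
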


Much like for punctured cone complexes, there is also a natural notion of developing map $\delta:\Delta\rightarrow N$ for collages. The new feature here is that $N$, being an affine space, is in particular a collage, and the developing map is actually a local immersion of collages. If $\Delta$ is also an affine manifold, then this recovers the classical notion of development.

It is not too hard to combinatorially classify the points of the analytic spaces associated to polyhedra, and hence, by extension, collages. The classification involves a division into `types' generalising Berkovich's language for describing the points of non-Archimedean curves. The calculation can be found in the appendix \ref{APPEND}.

\subsection*{Overconvergence}
Passing from the world of real or complex analytic geometry to that of rigid analytic geometry, one quickly encounters an alarming profligation of new phenomena: even under local finiteness conditions, rigid analytic spaces can manifest all kinds of pathological topological features. 

Remarkably, a single extra stipulation - that of \emph{overconvergence} - when applied everywhere, simplifies matters to the point that for many purposes, we may pretend we are once again doing complex topology. For example, Deligne \cite{Deligne} introduced the notion of overconvergent open immersion to recover Berkovich's Hausdorff topology on a rigid analytic space. This definition was later generalised to arbitrary morphisms, under the name `partially proper', by Huber \cite[\S8]{Hubook}. Overconvergence, in a slightly different guise, is also essential to the study of $p$-adic cohomology of non-compact geometries.



On a slightly more down-to-earth level, in the first part \cite{part1} of this sequence we saw that the overconvergence condition on a formal scheme implies that the parametrising punctured cone complex is actually a \emph{manifold} without boundary. The same logic applies to collages: overconvergence over $\F_1(\!(t)\!)$ forces the real points of the collage to be an affine manifold, and this can be used to recover theorem \ref{AFF_AFF}. 

However, in the analytic regime we even have a slightly more direct route available: following Deligne, we may re-topologise $X$ using only the overconvergent open immersions to obtain, under fairly general circumstances, a Hausdorff topological space $X^\mathrm{sur}$ and universal separation map
\[ b:X\rightarrow X^\mathrm{sur}.\] Given its definition, it can hardly be surprising that the properties of $X^\mathrm{sur}$ are related to absolute overconvergence of $X$. 

The non-trivial - though by no means difficult to prove - observation is that the topological realisation of the collage associated to $X$ is then actually \emph{homeomorphic} with $X^\mathrm{sur}$. In light of this fact, the arguments leading up to theorem \ref{AFF_AFF} become rather tautological.
\begin{thrm}[\ref{AFF_OVERTHM}]Let $X$ be a normal rigid space, locally of finite type over $\F_1(\!(t)\!)$, with associated collage $\Delta_X$. There is a natural map $\Delta_X(\R_\infty)\rightarrow X$, and the composition
\[ \Delta_X(\R_\infty)\rightarrow X\rightarrow X^\mathrm{sur} \]
is a homeomorphism.\end{thrm}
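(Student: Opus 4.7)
The plan is to reduce the statement to the case of a single embedded lattice polyhedron and identify both sides with the polyhedron equipped with its standard (tropical) topology. First I would describe the natural map $\Delta_X(\R_\infty)\to X$ functorially: an $\R_\infty$-point of the collage is a monoid homomorphism from an affine coordinate monoid into the Novikov semifield $(\R\cup\{\infty\},\min,+)$, and evaluating the elements of $\F_1(\!(t)\!)\{\Delta\}$ against such a homomorphism yields an $\F_1(\!(t)\!)$-algebra homomorphism, hence a point of $X$. Continuity is automatic, since the analytic topology on $X$ is initial for these evaluations. Using theorem \ref{AFF_COLL_THM}, I would then cover $X$ by affine pieces $X_i=\Spec\F_1(\!(t)\!)\{\Delta_i\}$ indexed by the polyhedra of the collage, and verify that both $\Delta_{(-)}(\R_\infty)$ and the separation quotient $(-)^\mathrm{sur}$ are formed locally: the former by the Zariski-local character of collage points, the latter because overconvergent open immersions restrict cleanly to open subspaces. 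This reduces the problem to $X=\Spec\F_1(\!(t)\!)\{\Delta\}$ for a single polyhedron $\Delta$.

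For the affine model, the desired statement is that the composition
\[ \Delta(\R_\infty)\longrightarrow X\stackrel{b}{\longrightarrow} X^\mathrm{sur} \]
is a homeomorphism onto $\Delta$ with its standard polyhedral topology. Leveraging the classification of points of $X$ carried out in appendix \ref{APPEND}, I would introduce an explicit tropicalisation map $X\to\Delta$ sending each point to its `valuation centre' in $N\tens\R$, and identify $\Delta(\R_\infty)\to X$ as an a priori section. It then suffices to show that tropicalisation agrees with the universal separation $b$, or equivalently that two points of $X$ are identified by every overconvergent open immersion if and only if they have the same tropicalisation.

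The ``only if'' direction follows from the characterisation of overconvergent rational subdomains of $X$ as preimages under tropicalisation of strictly contained (relatively compact) subpolyhedra: any two distinct polyhedral points are separated by such a subdomain. Conversely, if two points of $X$ have the same tropicalisation, the same description forces them to lie in exactly the same overconvergent rational subdomains, hence to become identified in $X^\mathrm{sur}$. Finally, continuity of the inverse $X^\mathrm{sur}\to\Delta(\R_\infty)$ is a reformulation of the statement that these overconvergent subdomains induce exactly the standard polyhedral topology on $\Delta$.

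The main obstacle will lie in the third step: securing the description of overconvergent rational subdomains of $\Spec\F_1(\!(t)\!)\{\Delta\}$ as preimages of interior subpolyhedra, and verifying that these separate any two distinct tropical points. I expect this to fall out of the relative compactness criterion characterising overconvergent open immersions, combined with the explicit point classification in appendix \ref{APPEND}; but this is where the essential work is concentrated. Once it is in place the remainder of the argument is essentially formal, with the analytic, combinatorial, and topological pieces lining up cleanly.
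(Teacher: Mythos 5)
Your proposal is broadly sound and lands on the right key ingredient, but it takes a different route from the paper's proof and contains one factual misstep worth flagging.

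The paper does not argue pointwise via a tropicalisation map and the Appendix point classification. Instead it works entirely at the level of frames: it observes that $c^{-1}:\sh U_{/X}\to\sh P(\Delta_X(\R_\infty))$, $U\mapsto U(\R_\infty)$, admits a right adjoint $c_*$ sending $S$ to the union of all rational polyhedra whose realisation is contained in $S$, and that $c^{-1}c_*=\mathrm{id}$ on Euclidean opens. The crux is proposition \ref{AFF_MODIFICATION} --- deduced from prop.~I.\zref{I_PFAN_MODIFICATION} applied to a model of $X$ --- which says $V$ is an overconvergent neighbourhood of $U$ iff $\Delta_V(\R_\infty)$ is a neighbourhood of $\Delta_U(\R_\infty)$. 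Combined with corollary \ref{SEP_SUR_OPEN}, this gives corollary \ref{AFF_OVERCOR}: $U$ is overconvergent iff $\Delta_U(\R_\infty)$ is Euclidean open. The adjunction $c^{-1}\dashv c_*$ then restricts to an isomorphism $\sh U^\mathrm{sur}_{/X}\cong\sh U_{\Delta_X(\R_\infty)}$ of locales, and since the target is a sober (indeed metrisable) space this is automatically a homeomorphism. Your strategy instead introduces an explicit tropicalisation $X\to\Delta$, invokes the Appendix classification of points, and argues that $b$ coincides with it by showing overconvergent subdomains separate exactly those pairs of points with distinct tropicalisation. This works, and your ``essential work'' --- characterising overconvergent subdomains as preimages of interior subpolyhedra --- is precisely the content of \ref{AFF_OVERCOR}. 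The trade-off: your route yields a concrete description of the fibres of $b$, but is heavier (it needs all of appendix \ref{APPEND}); the paper's frame-level argument never has to mention an individual point and globalises for free because $c^{-1}$ is left exact.

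One concrete error: you assert that continuity of $\Delta_X(\R_\infty)\to X$ is ``automatic, since the analytic topology on $X$ is initial for these evaluations.'' This is false, and the paper says so explicitly in the final section (``a (discontinuous) map $c:B\to B^\mathrm{rig}$''). For an open $U\subseteq X$ that is not overconvergent, $U(\R_\infty)$ is typically not Euclidean open --- a quasi-compact subpolyhedron of $\Delta$ pulls back a non-open set. The map $c$ is continuous only after composing with $b$, i.e.\ for the overconvergent topology; indeed, this failure of continuity of $c$ is exactly the reason one needs to pass to $X^\mathrm{sur}$ in the first place. Your subsequent steps do not actually rely on continuity of $c$ itself, so the argument survives, but the reasoning you gave for it should be replaced by: $c^{-1}$ carries overconvergent opens to Euclidean opens (cor.~\ref{AFF_OVERCOR}), hence $b\circ c$ is continuous.
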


In particular, if $X$ is overconvergent, then $X^\mathrm{sur}$ is actually a manifold.
In other words, starting from an algebraically-defined category and applying principles of pure rigid analytic geometry, we obtain a class of Hausdorff topological manifolds that has been studied by manifold topologists since time immemorial \cite{Goldman1,Goldman2}.

Well-understood principles that govern the latter may therefore shed light on the former (and, perhaps, vice versa). For instance, compact affine manifolds that are \emph{complete} - a condition conjecturally equivalent to a Calabi-Yau property - have been classified in dimensions up to three.

Finally, in \S\ref{C} we produce a base change from the overconvergent site of rigid analytic spaces over $\F_1(\!(t)\!)$ to the category of complex analytic spaces that recovers the classical construction
\[ \mu:TB/\Lambda^\vee\rightarrow B \]
of torus fibrations over affine manifolds. For the simple geometries that this statement concerns, it makes precise the idea that overconvergent topology `looks like' the topology of a complex analytic space.

\subsection*{B-model torus fibration}
Many ideas here have been motivated, if indirectly, by the mirror dual construction in symplectic geometry, features of which I expect will continue to inspire future work.

The starting point is the symplectic theory of toric manifolds, which revolves around the Delzant construction. This construction parametrises a symplectic manifold $(X,\omega)$ with a Hamiltonian action of a compact torus $T$ in terms of a completely integrable system
\[ X\rightarrow \Delta \]
with $\Delta$ a polyhedron inside an affine space modelled on the dual Lie algebra $\lie t^\vee$ of $T$. One can pass to a `large radius limit' in which $\Delta$ is a partial compactification of $\lie t^\vee$. If $X$ carries a complex structure, then this large polyhedron is controlled by the fan of $X$ - alternatively, it is another avatar for the $\F_1$-structure given by the torus embedding.

A more general situation that arises in SYZ mirror symmetry is when $X$ has the structure of a Lagrangian torus fibration over a manifold $B$. The $T$-action and embedding into an affine space under $\lie t^\vee$ are now only defined locally on $B$. Globally, the base attains a reduction of structure group to $\SL_n(\Z)\ltimes\R^n$, making it into an $\R$-\emph{affine manifold}. With a rational symplectic form and choice of pre-quantum structure on $X$, this can be refined to a $\Q$-affine structure by considering the `Bohr-Sommerfeld' Lagrangians.

The SYZ conjecture predicts that via a `Legendre dual' construction, $B$ can also be thought of as parametrising a certain maximal degeneration of Calabi-Yau varieties, and in particular, a rigid analytic space $X^\vee$ over $\C(\!(t)\!)$. The modern form of this conjecture has this dual `parametrisation' a kind of \emph{non-Archimedean torus fibration}
\[ \mu:X^\vee\rightarrow B \]
as in \cite{KoSo2} (where the concept is made precise using the Berkovich visualisation of $X^\vee$). This fibration carries a locally defined action by the dual torus with Lie algebra $\lie t^\vee$.

The thesis of this work is that non-Archimedean torus fibrations are the natural generalisation of toric geometry to the rigid analytic world. Though we focussed on $\F_1$ in this paper, I will return to the geometry of $X^\vee$ itself in a future work.

By further analogy with the symplectic story, one might also imagine generalisations to integrable systems with singularities; this would allow us to study a much larger class of analytic spaces. Indeed, certain singularities of `focus-focus' type are already allowed in the Gross-Siebert programme, which is concerned with fairly general maximally degenerate Calabi-Yau manifolds. However, a discussion of these ideas is far beyond the scope of the present work.

\subsection{On defining rigid analytic geometry}\label{STRUCT}
According to Raynaud \cite{Raynaud}, a rigid analytic space is what you get when you puncture a formal scheme along a closed subscheme. Following this principle, we define rigid analytic geometry by a certain localisation procedure applied to a category ${}_Z\mathbf{FSch}$ of formal schemes marked with a family of closed subschemes $Z$ that was discussed in \S I.\zref{I_FSCH_MARKING}.

The morphisms that get inverted are, intuitively, those birational maps that are isomorphisms `away from $Z$'. Such maps are generated by what are usually termed `admissible' modifications. Our procedure \S\ref{RIG_RAYNAUD} differs from simply inverting these morphisms (like in \cite{Raynaud}) only in that we force it to be compatible with \emph{glueing}; that is, with the structure of what will be the \emph{rigid topos} $\Sh\mathbf{Rig}$. Literally, all this means is that our localisation functor
\[ \Sh{}_Z\mathbf{FSch}\rightarrow \Sh\mathbf{Rig} \]preserves \emph{colimits}. This localisation procedure is almost identical to that of \cite{FujiKato}, though I have plumped for a more standard language to describe it. 

On the other hand, our input category ${}_Z\mathbf{FSch}$ is a generalisation of that considered in \emph{op. cit.}, the primary improvements it provides being that $\mathbf{Rig}$ contains the category of formal schemes, and that the map \[ j:X\rightarrow X^+ \] exhibiting $X^+$ as a formal model of $X$ is actually a morphism of rigid spaces. Analytic spaces in the traditional sense - that is, for which $Z$ is locally a reduction of a formal model - are called \emph{purely} analytic.

One can reproduce many of the basic definitions and arguments of rigid geometry by means of a `lift' to ${}_Z\mathbf{FSch}$ (\S\ref{RIG_MODELS}).

\

The Raynaud-style definition has the advantage of clearly producing the category of objects we want to study, but it lacks a certain concreteness. It will also be useful to have local descriptions of rigid analytic spaces in terms of \emph{topological commutative algebra}; in particular, this will be essential to get any kind of module theory (though we don't pursue that in this paper). In this paper, I only discuss topological $\F_1$-algebras.

This local algebra is where the $\F_1$-regime enjoys considerable simplifications relative to the $\Z$-regime, the main point of departure being that as soon as a module is \emph{Hausdorff}, it is \emph{complete}. We are therefore able to switch between a point-set-topological and pro-object description of topological $\F_1$-algebras and modules.\footnote{In general, the former description breaks down over $\Z$, leading authors \cite{Abbes,FujiKato} to impose fundamental finiteness conditions at an early stage.} In \S\ref{RIG_LC}, I review the various elementary assumptions one needs to get the theory off the ground, along with some remarks their geometric provenance. One arrives at a certain category of `Tate' algebras, that is, pairs $(A;A^+)$ consisting of a pro-discrete ring $A^+$ and a localisation $A$ thereof. 

The spectrum of $A^+$ is a formal scheme, and one marks the divisors cut out by the functions inverted by the localisation $A^+\rightarrow A$. The rigid space obtained by puncturing this marking is labelled $\Spec A$ (\S\ref{RIG_AFFINE}). This produces a functor
\[\Spec:\{\text{topological rings}\}\rightarrow\mathbf{Rig}.\]
The only thing remaining to recover the picture of `affine objects' familiar from algebraic geometry is to invert the construction. It turns out that this is not quite possible: there are non-trivial admissible modifications between affine marked formal schemes.

We can salvage the situation by identifying exactly which morphisms are inverted at the level of algebra. The algebraic detail you have to know to get this to work is:
\begin{itemize}\item[] What happens to $\Gamma\sh O_{X^+}$ under an admissible blow-up (of a model $X^+$ of $\Spec A$)?\end{itemize}
By finiteness of global sections over projective morphisms, the answer is that you get an \emph{integral algebra extension} inside $A$ (\S I.\zref{I_RIG_LEMMA}). Thus the full, localising subcategory of pairs $(A;A^+)$ such that $A^+$ is \emph{integrally closed} inside $A$ embeds fully faithfully via Spec into $\mathbf{Rig}$.


\

General nonsense (cf. \S I.\zref{I_TOPOS}) also provides us with an underlying topological space for our rigid analytic spaces. This is the famous \emph{Riemann-Zariski space}, defined here \S\ref{RIG_RZ} as a limit over all formal models (with a more explicit description, in a special case, in the appendix \ref{APPEND}). If one didn't already know to look at this space, one could still find its definition by inspecting the nuts and bolts of the localisation construction.


\section{Remarks on polyhedra}\label{POLY}

Let $H\subseteq\R$ be a totally ordered additive group, and write $H^\circ:=H\cap\R_{\leq0}$. An \emph{$H$-rational polyhedron} is a subset of an $H$-affine space $N$ defined by a finite list of inequalities with coefficients in $H$.

We will be interested in the filtered monoids that arise as sets of affine functions bounded above on a polyhedron, and how they can be used to recover the combinatorial structure of the same polyhedra.

Let $N$ be an affine space over $H$, and let $\Delta\subseteq N$ be an $H$-rational polyhedron. We introduce the partially ordered monoids
\[ \Aff^+_\Delta(N,H) \quad\subseteq\quad    \Aff_\Delta(N,H) \quad\subseteq\quad \Aff(N,H) \]
of, in reverse order, affine functions on $N$ with integral slopes, affine functions \emph{bounded above} on $\Delta$, and affine functions bounded above by zero on $\Delta$. For the moment, I will be deliberately vague about what kind of objects $N$ and $\Delta$ are, for the most part considering them as formally dual to the partially ordered monoids in which they are encoded.

\subsection{The ambient affine space}

The monoid $\Aff(N,H)$ is actually a torsion-free Abelian group, which fits into an exact sequence
\[0 \rightarrow H\rightarrow \Aff(N,H) \rightarrow \Lambda_{N/H} \rightarrow 0 \]
with $\Lambda_{N/H}$ a lattice: the \emph{character lattice} of $N$. The image in $\Lambda_{N/H}$ of a function $F\in\Aff(N,H)$ is its \emph{differential} $dF$.

We can recover the $H$-rational points of $N$ from its affine functions by the formula
\[ N(H)=\Hom_H(\Aff(N,H),H), \]
where $\Hom_H$ denotes the set of group homomorphisms that commute with the structural maps from $H$. This set is a torsor for $\Hom(\Lambda_{N/H},H)$; in other words, $\Lambda_{N/H}^\vee\tens H$ is the model space for $N(H)$. More generally, we may take points in any additive extension $H\subseteq H^\prime\subseteq \R$ of $H$ with the formula
\[ N(H^\prime)=\Hom_H(\Aff(N,H),H^\prime). \]

If $N_1\rightarrow N_2$ is an affine map of $H$-affine spaces, then we can form the exact sequence
\[ 0\rightarrow \nu^\vee_{N_1/N_2} \rightarrow \Aff(N_2,H) \rightarrow \Aff(N_1,H) \rightarrow \Lambda_{N_1/N_2} \rightarrow 0 \]
with $\Lambda_{N_1/N_2}^\vee\tens H$ acting simply transitively on the fibres.
If $\nu^\vee_{N_1/N_2}=\Lambda_{N_1/N_2}^\vee=0$, then $N_1\rightarrow N_2$ is a \emph{lattice refinement} of affine spaces. An extension $H^\prime$ of $H$ induces a natural refinement \[N\rightarrow N\tens H^\prime ,\quad \Aff(N\tens H^\prime,H^\prime)=\Aff(N,H)\oplus_HH^\prime \] of any $H$-affine space $N$ (provided $H$ is non-trivial). Of course, $N\tens H^\prime(H^\prime)=N(H^\prime)$.

We can form the quotient $N_2/\Lambda^\vee$ by a primitive distribution $\Lambda^\vee\subseteq\Lambda_{N_2/H}^\vee$ with affine functions the $\Lambda^\vee\tens H$-invariants. They fit into a Cartesian square
\[ \xymatrix{ 
\Aff(N_2/\Lambda^\vee,H) \ar[r]\ar[d] & \ker[\Lambda_{N_2/H}\rightarrow\Lambda]\ar[d] \\ 
\Aff(N_2,H) \ar[r] & \Lambda_{N_2/H}
}\]
If the distribution $\Lambda=\Lambda_{N_1/H}$ comes from an affine subspace $N_2\subseteq N_1$, we get a further Cartesian square
\[ \xymatrix{ 
\Aff(N_2/N_1) \ar[r]\ar[d] & \Aff(N_2,H)\ar[d] \\ 
H \ar[r] & \Aff(N_1,H)
}\]
whose left-hand vertical arrow splits the usual cotangent sequence. Thus $\Aff(N_2/N_1)\cong H \oplus \nu^\vee_{N_1/N_2}$ and $N_2/N_1(H)$ is in bijection with its model $\nu_{N_1/N_2}\tens H$.

Any map of affine spaces can be factored into a surjection (purely transcendental submersion), a refinement (\'etale map), and a primitive embedding.

\subsection{The cone of bounded functions}

Let $\Delta\subseteq N$ be a polyhedron. The set $\Aff_\Delta(N,H)$ is a submonoid of $\Aff(N,H)$, with equality if $\Delta$ is of finite extent. When $\Delta$ is $H$-rational, $\Aff_\Delta(N,H)$ is a \emph{cone over $H$} - a saturated monoid generated by $H$ and finitely many additional elements. It fits into an exact sequence
\[ 0\rightarrow H\rightarrow \Aff_\Delta(N,H)\rightarrow\Lambda_{\Delta/H}\rightarrow 0\]
whose right-hand term $\Lambda_{\Delta/H}$ is a polyhedral cone (finitely generated, saturated subgroup) inside $\Lambda_{N/H}$. Its polar $\Lambda^\diamond_{\Delta/H}\subseteq\Lambda_{N/H}^\vee$ is usually called the \emph{recession cone} of $\Delta$. It is the set of tangent vectors to rational rays contained within $\Delta$.

The cone $\Aff_\Delta(N,H)$ is capable of separating points only up to the action of the \emph{lineality space}
\[ \Delta^|:=\left(\frac{\Aff(N,H)}{\Aff_\Delta(N,H)} \right)^\vee \subseteq \Lambda_{N/H}^\vee, \]
which is the largest linear subspace of $\Lambda_{\Delta/H}^\diamond$. Alternatively,
\[ \Hom_H(\Aff_\Delta(N,H)\tens\Z,H)= N/\Delta^|(H). \]
The lineality space is zero if and only if $\Aff_\Delta(N,H)\tens\Z=\Aff(N,H)$, if and only if the recession cone (equivalently $\Delta$) is \emph{strongly convex}.

If $N_1\rightarrow N_2$ maps $\Delta_1\subseteq N_1$ into $\Delta_2\subseteq N_2$, we once again have an exact sequence
\[ 0\rightarrow \nu^\diamond_{\Delta_1/\Delta_2} \rightarrow \Aff_{\Delta_2}(N_2,H) \rightarrow \Aff_{\Delta_1}(N_1,H)\rightarrow \Lambda_{\Delta_1/\Delta_2} \rightarrow 0 \] of saturated monoids whose outer terms $\nu^\diamond_{\Delta_1/\Delta_2},\Lambda_{\Delta_1/\Delta_2}$ are cones.

We may also form a quotient $\Delta_2/\Lambda^\vee$ of $\Delta_2$ by a distribution $\Lambda^\vee\subseteq\Lambda^\vee_{N_2/H}$, whose $H$-points form the image of $\Delta_2(H)$ in $N_2/\Lambda^\vee(H)$, via the fibre square
\[ \xymatrix{ 
\Aff_{\Delta_2/\Lambda^\vee}(N_2/\Lambda^\vee,H) \ar[r]\ar[d] &\Aff(N_2/\Lambda^\vee,H)\ar[d] \\ 
\Aff_{\Delta_2}(N_2,H) \ar[r] & \Aff(N_2,H)
}\]
If the distribution comes from an embedding $N_1\subseteq N_2$ of affine spaces, the affine functions on the quotient are $H\oplus\nu^\diamond_{\Delta_2/N_1}$.

Taking the quotient by the lineality space allows us to replace any polyhedron $\Delta$ with a strongly convex one $\Delta/\Delta^|$.
All of the polyhedra in this document will be strongly convex.

\subsection{The cone of non-positive functions}\label{POLY_NONPOS}

If $\Aff_\Delta(N,H)$ is generated by $H$ and affine functions $F_1,\ldots,F_k\in\Aff^+_\Delta(N,H)$, then $\Delta$ is an intersection of half-spaces
\[ \Delta(H)=\bigcap_{i=1}^kF^{-1}H^\circ. \] 
The recession cone $\Lambda^\diamond_{\Delta/H}$ is the largest submonoid of $\Lambda^\vee_{N/H}$ such that the action of $\Lambda^\diamond_{\Delta/H}\tens H^\circ$ on $N(H)$ preserves $\Delta(H)$. 
It follows that
\[ \Delta(H)=\Hom_{H^\circ}(\Aff^+_\Delta(N,H),H^\circ).\]
To put it another way, $\Delta(H)$ is the set of homomorphisms of \emph{pairs} \[\left(\Aff_\Delta(N,H);\Aff^+_\Delta(N,H)\right)\rightarrow (H;H^\circ)\]  that respect the $H$-structure.

The monoid $\Aff_\Delta(N,H)$ comes equipped with an $H$-indexed filtration
\[ \Aff^+_\Delta(N,H)+\lambda\hookrightarrow \Aff_\Delta(N,H), \quad \lambda\in H \]
by $\Aff^+_\Delta(N,H)$-invariant subsets. It is automatically preserved by homomorphisms of pairs over $H$. Since every element of $\Aff^+_\Delta(N,H)$ is be bounded above by some constant, the filtration is \emph{exhaustive}; since no function is bounded above by \emph{every} constant, it is also \emph{separated}.

\subsection{Morphisms over $\F_1$ and the boundary at infinity}\label{POLY_BOUNDARY}

The set $\Delta(H)$ sits inside a natural `compactification' $\Delta(H_\infty)$ in which certain strata, indexed by the faces of the recession cone, are added at infinity. Tthese strata are defined by allowing bounded functions to take the value $-\infty$. To describe this algebraically, we need to introduce \emph{absorbing elements} to our monoids - in other words, move over $\F_1$.

Let $Q$ be a monoid, $\F_1[z^Q]$ the associated $\F_1$-algebra. Let $I\trianglelefteq\F_1[z^Q]$ be an ideal. The quotient of $\F_1[z^Q]$ by $I$ is, as a set, obtained by identifying the elements of $I\trianglelefteq\F_1[z^Q]$ with $0$. In particular, \[\F_1[z^Q]\setminus I\rightarrow(\F_1[z^Q]/I)\setminus 0\] is \emph{bijective}.
If in particular $I=\lie p$ is prime, then $\sigma_\lie{p}:=\F_1[z^Q]\setminus\lie p$ is a submonoid of $Q$; in fact, it is a \emph{face} in the sense that \[X+Y\in\sigma_\lie{p} \quad \Leftrightarrow \quad X,Y\in\sigma_\lie{p}. \]
In particular, $\sigma_\lie{p}$ contains every subgroup of $Q$. The quotient is uniquely identified with
\[ \F_1[z^Q]\rightarrow \F_1[z^{\sigma_{\lie p}}],\quad z^X \mapsto \left\{\begin{matrix} z^X \quad\text{if }X\in\sigma_\lie{p} \\ 0\quad\text{otherwise.} \end{matrix}\right. \]

Specialise now to the case of the $\F_1$-algebra $\sh O\{\Delta\}$ associated to $\Aff_\Delta(N,H)$. Since $H\subset\sh O\{\Delta\}$ is a group, it is contained in the complement of $\lie p$. This complement is therefore the preimage of its image in the corecession cone $\Lambda_{\Delta/H}$, of which it is a face $\mathrm{asy}^\diamond_\lie{p}$. The quotient is identified with $\sh O\{\Delta/\mathrm{asy}_{\lie p}\}$, where $\mathrm{asy}_\lie{p}$ is the polar cone to $\mathrm{asy}^\diamond_\lie{p}$. 

The cone $\mathrm{asy}_\lie{p}$ could be called the \emph{asymptotic cone} of $\Delta$ with limits in $\Delta/\mathrm{asy}_{\lie p}$.

The sense in which $\Delta(H_\infty)$ is a `compactification' of $\Delta(H)$ is as follows: let $\{x_n\}_{n\in\N}$ be a sequence that escapes every bounded subset. Then if $\mathrm{asy}_{\lie p}$ is the minimal asymptotic cone to $x_n$, the sequence can be said to \emph{limit} into the infinite face $\Delta/\mathrm{asy}_{\lie p}$ with affine functions $\sh O(\Delta)/\lie p$; to be precise, its limit is the (eventually constant) image of the tail end of the sequence in $\Delta/\mathrm{asy}_{\lie p}(H)$.
This argument also shows that $\Delta(\R_\infty)$, equipped with the order topology coming from $\R$, is compact in the usual sense.

\subsection{Faces}\label{POLY_FACE}

Let $v\in\Lambda_{\Delta/H}$. We can associate to $v$ a \emph{finite face} $\Delta_v$ of $\Delta$ along which any affine function integrating $v$ is maximised. The function can be normalised to vanish along $\Delta_v$; it is then in particular a member of $\Aff^+_\Delta(N,H)$. There is a pushout square
\[\xymatrix{
\nu^\diamond_{\Delta_v/\Delta} \ar[r]\ar[d] & \Aff^+_\Delta(N,H) \ar[d] \\
\nu^\diamond_{\Delta_v/\Delta}\tens\Z \ar[r] & \Aff^+_{\Delta_v}(N,H)
}\]
with $\nu^\diamond_{\Delta_v/\Delta}$ a cone in $\Aff^+_\Delta(N,H)$.

Looking at it another way, the finite faces of $\Delta$ are exactly the sub-polyhedra that are cut out by localisations of the $\F_1$-algebra $\sh O^+\{\Delta\}$ associated to $\Aff^+_\Delta(N,H)$. The character cone $\Lambda_{\Delta/H}$ is naturally subdivided by the corresponding conormal cones. By taking the cone over $\Delta$, this recovers the familiar classification of open subsets of toric varieties discussed in I.\zref{I_FAN_CONE}. 

The usual convention is that faces in dimensions zero and one are called \emph{vertices} and \emph{edges}, and those in codimensions one and two are called \emph{facets} and \emph{ridges}, respectively.

\paragraph{Variant - Faces as closed strata at $t=0$}
Since $\nu^\diamond_{\Delta_v/\Delta}$ is a face of $\Aff^+_\Delta(N,H)$, its complement is a prime ideal in the corresponding $\F_1$-algebra $\sh O^+\{\Delta\}$. The quotient is dual to a closed subvariety of finite type over the residue field $\F_1$. The open subset defined by the localisation above is the minimal open subset of $\Delta$ that contains this closed subvariety.

\subsection{Affine manifolds}\label{POLY_AFF}

An \emph{$H$-affine manifold} is a manifold equipped with a reduction of structure group to the $H$-affine group $\GL_n(\Z)\ltimes H^n$. In other words, it is a manifold locally modelled on subsets of an $H$-affine space with affine transition maps.

Such a manifold comes equipped with a locally constant sheaf $\Aff(B,H)$ of \emph{$H$-affine functions}. If $H^\prime$ is an extension of $H$, it is possible to make sense of the \emph{$H^\prime$-points} $B(H^\prime)$ of $B$ as the set of points on which $H$-affine functions take values in $H^\prime$. In particular, $B(\R)$ is just the underlying topological space of $B$, equipped with the weak topology.

Let $\widetilde B$ be a universal cover of $B$. The global sections of $\Aff(\widetilde B,H)$ are dual to an $n$-dimensional affine space $N$, and local charts patch together to yield a natural local affine diffeomorphism
\[ \delta:\widetilde B\rightarrow N, \]
called the \emph{developing map} of $B$. The obstruction to this map descending to $B$ is the \emph{monodromy representation}
\[ \rho:\pi_1(B,p)\rightarrow\Aut(\Aff(N,H)) \]
for any $p\in B(\R)$. So this representation is trivial if and only if $B$ is a covering space of an open subset of $N$. Of course, we can always make sense of the developing map locally on $B$.

Suppose that $B$ is connected. Then the developing map is surjective if and only if all geodesics on $B$ are parametrised by the entire real line, i.e. if $B$ is \emph{complete}. In this case, $\widetilde B\cong N$ and $B$ is a $K(\pi,1)$.

The significance of the completeness hypothesis to analytic geometry is the conjecture of Markus:
\begin{conj}[Markus]An affine manifold is complete if and only if it has parallel volume.\end{conj}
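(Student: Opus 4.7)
This is the celebrated Markus conjecture, a long-standing open problem in affine geometry dating to 1962, so the following can only be a speculative outline rather than a bona fide proof plan: a complete argument is not known and would constitute a major breakthrough.

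The tractable direction is completeness $\Rightarrow$ parallel volume, at least in the compact case. If $B$ is complete, then $\widetilde B\cong N$ and $B=N/\Gamma$ for a discrete subgroup $\Gamma\subseteq\Aut(\Aff(N,H))$ acting freely and properly discontinuously. A parallel volume form on $B$ is the same as a translation-invariant volume form on $N$ that is $\Gamma$-invariant, that is, the condition that the linear part of $\Gamma$ land in $\SL_n(\Z)$. For compact $B$ this is the standard consequence of the fact that a discrete cocompact group of affine transformations must lie in the unimodular affine group, since otherwise the pullback of translation-invariant volume would scale nontrivially under $\Gamma$ and could not descend to a finite-volume quotient.

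The hard direction -- parallel volume $\Rightarrow$ completeness -- is the substance of the conjecture. A natural strategy is to exploit the parallel volume form to control the defect of the developing map $\delta:\widetilde B\to N$: its image is an open submanifold of $N$, and one wants to show its complement is empty. The plan would be to argue that any ``dark'' point in $N\setminus\delta(\widetilde B)$ produces a geodesic ray in $\widetilde B$ of finite affine length terminating at the boundary, and to derive a contradiction by comparing the parallel volume near that end with the affine measure on a neighbourhood in $N$. One would then conclude $\delta$ is a covering of $N$, hence $\widetilde B=N$.

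The principal obstruction -- and the reason the conjecture has resisted sixty years of attack -- is controlling the global behaviour of $\delta$ in terms of the holonomy $\rho:\pi_1(B)\to\Aff(N,H)$. Partial results are known: Fried-Goldman-Hirsch handled the nilpotent-holonomy case, Carri\`ere settled discompacity at most one, and the conjecture is known for compact affine manifolds of dimension $\leq 3$. Beyond these regimes, no general technique for ruling out dark points is available, which is why the present paper quotes the statement as a \emph{conjecture} rather than attempting to prove it.
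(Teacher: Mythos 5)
You have correctly recognised that the statement is not proven in the paper: it is quoted as a conjecture, and the surrounding text explicitly says that ``every complete orientable affine manifold has parallel volume; the converse is still open.'' Your sketch of the easy direction (linear holonomy of a compact complete quotient must be unimodular) and your survey of the known partial results (Fried--Goldman--Hirsch, Carri\`ere, low dimensions) are consistent with the paper's brief remarks and the reference to Goldman's survey. One small caveat worth flagging: the paper's phrasing omits the compactness hypothesis that is standard in the Markus conjecture, and, since in the paper's convention the linear part of the structure group of an $H$-affine manifold is $\GL_n(\Z)$, parallel volume is automatic once the manifold is orientable -- so the conjecture as literally printed is really a shorthand for the classical $\R$-affine statement about closed manifolds, which you have in any case implicitly restored.
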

Certainly every complete orientable affine manifold has parallel volume; the converse is still open. Affine manifolds with parallel volume correspond to \emph{Calabi-Yau} analytic spaces. 


Closed, complete affine manifolds are classified by groups acting cocompactly and properly discontinuously by affine transformations on Euclidean space. Such subgroups of $\Aff(N)$ are also known as \emph{affine crystallographic groups}. A classification is known in dimensions up to three; see \cite{Goldman2}.

It is also not difficult to formulate a notion of affine manifold \emph{with corners at infinity}; I provide a sketch-definition here. An affine manifold with corners is a manifold with corners $(B,\partial B)$ with an affine structure on $B\setminus\partial B$ such that
\begin{itemize}
\item a geodesic limits into the boundary only when it is complete;
\item each boundary stratum \[\partial B_k \stackrel{i}{\hookrightarrow} B \stackrel{j}{\hookleftarrow} B\setminus\partial B_k\] is an $H$-affine manifold with respect to the subsheaf $\mathrm{Aff}(\partial B_k,H)\subseteq i^*j_*\mathrm{Aff}(B,H)$ of locally bounded sections.
\end{itemize}
Affine functions on an affine manifold with corners are allowed to take the value $-\infty$. If the function is not everywhere $-\infty$, it can only take this value on the boundary.

\section{Rigid analytic spaces}\label{RIG}
This presentation of analytic spaces is valid \emph{verbatim} for ordinary rigid analytic spaces over $\Z$. For that reason, I have suppressed the subscript $\F_1$s for this and the next two sections.

The main examples having already been exposed in section \ref{POLY}, I have kept the discussion here mostly theoretical. I invite the reader also to keep in mind his favourite rigid analytic spaces over non-Archimedean fields as geometric motivation for this development.

\subsection{Raynaud presentation}\label{RIG_RAYNAUD}

In \S I.\zref{I_FSCH_MARKING} we defined a category ${}_Z\mathbf{FSch}$ of \emph{marked formal schemes}, whose objects are pairs $(X^+;Z)$ consisting of a formal scheme $X^+$ and a finitely presented closed formal subscheme $Z\subseteq X^+$; morphisms of pairs are morphisms of formal schemes that, up to nilpotents, pull back target markings into source markings. The isomorphism class of a pair depends only on $X^+$ and the underlying reduced formal scheme of $Z$.

We will construct the rigid topos $\Sh\mathbf{Rig}$ so that it is universal with respect to the existence of a quasi-compact, quasi-separated geometric morphism
\[ \Sh\mathbf{Rig} \stackrel{\zeta}{\longrightarrow} \Sh{}_Z\mathbf{FSch} \]
such that $\zeta^*=:(-)\rigless Z$ inverts isomorphisms `away from $Z$'. We will work on the principle that any such morphism can be dominated by a blow-up along a subscheme of $Z$, or more generally, a subscheme of $X^+$ whose reduction is contained in $Z$. Note that the definition of such morphisms is not local in $\Sh\mathbf{Sch}$, and so it will not be possible to consider formal schemes as embedded in that category.

The fact that $\zeta^*$ must be the pullback of a geometric morphism means that it is a localisation of $\Sh{}_Z\mathbf{FSch}$ only as a \emph{stack} on itself, rather than as a plain (external) category. That is to say, it must preserve glueings, a.k.a. colimits. Moreover, the hypothesis that $\zeta$ be qcqs - by definition, meaning that $\zeta^*$ preserves compact objects - is essential if we are to have a reasonable theory of compactness for rigid analytic spaces. 

Let us first define the category on compact objects - more precisely, on the site ${}_Z\mathbf{FSch}^\mathrm{qcqs}$ of qcqs marked formal schemes. Let us define $W$ to be the class of admissible modifications (def. I.\zref{I_RIG_ADMIT_DEF}). It is stable for composition. By lemma I.\zref{I_MORP_ADMISSIBLE}, the saturation of $W$ is stable for base change and descent.


We may now construct the category of \emph{qcqs rigid spaces} as a localisation
\[ {}_Z\mathbf{FSch}^\mathrm{qcqs}\rightarrow{}_Z\mathbf{FSch}^\mathrm{qcqs}[W^{-1}]=:\mathbf{Rig}^\mathrm{qcqs}. \]
The following construction works for any left exact localisation. Let $\Pro_Z\mathbf{FSch}^\mathrm{qcqs}$ denote the category of (`admissible') pro-objects of $\mathbf{FSch}^\mathrm{qcqs}$ with transition maps in $W$. Since the saturation of $W_\mathrm{loc}$ is stable for base change, this category is closed under finite limits in the category of all pro-objects. The inclusion of ${}_Z\mathbf{FSch}^\mathrm{qcqs}$ as the constant pro-objects is therefore left exact.

We will let $\mathbf{Rig}^\mathrm{qcqs}$ be the full subcategory of $\Pro_Z\mathbf{FSch}^\mathrm{qcqs}$ spanned by the \emph{colocal} objects, that is, the admissible pro-objects $F$ for which $\Hom(F,X)\rightarrow \Hom(F,Y)$ is a bijection for any $X\rightarrow Y$ in $W$. 

The inclusion of $\mathbf{Rig}^\mathrm{qcqs}$ has a right adjoint, which sends an admissible pro-object $\lim_iX_i$ to the cofiltered limit of all admissible blow-ups of the $X_i$.
\[\xymatrix{ & \mathbf{Rig}^\mathrm{qcqs}\ar@/_/[d] \\ {}_Z\mathbf{FSch}^\mathrm{qcqs} \ar[r] & \Pro_Z\mathbf{FSch}^\mathrm{qcqs}\ar@/_/[u] }\]
The composite functor ${}_Z\mathbf{FSch}^\mathrm{qcqs} \rightarrow \Pro_Z\mathbf{FSch}^\mathrm{qcqs}$, which sends a marked formal scheme $(X^+,Z)$ to the limit of blow-ups with centres in $Z$, is left exact.

\begin{lemma}Let $F:{}_Z\mathbf{FSch}^\mathrm{qcqs}\rightarrow\mathbf C$ be a functor inverting admissible blow-ups. It factors uniquely through $\mathbf{Rig}^\mathrm{qcqs}$.\end{lemma}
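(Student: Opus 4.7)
The plan is to exploit the explicit pro-object description of $\mathbf{Rig}^{\mathrm{qcqs}}$ in order to build the extension $\widetilde F$ level-by-level. First I would note that although the hypothesis only requires $F$ to invert admissible blow-ups, this automatically extends to the whole class $W$: by the definition of admissible modification, every $f \in W$ is dominated by an admissible blow-up $g = h \circ f$ (with $h$ itself in $W$), so $F(f)$ is cancellable against isomorphisms on both sides and is therefore itself invertible in $\mathbf{C}$.

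For existence, denote the composite by $\iota : {}_Z\mathbf{FSch}^{\mathrm{qcqs}} \to \mathbf{Rig}^{\mathrm{qcqs}}$. Given $X \in \mathbf{Rig}^{\mathrm{qcqs}} \subseteq \Pro_Z\mathbf{FSch}^{\mathrm{qcqs}}$ with presentation $X = \lim_i X_i$ (transitions in $W$), I set $\widetilde F(X) := F(X_i)$ for any chosen index $i$; two choices are canonically identified via $F$ of the transition maps, now isomorphisms. A morphism $f : X \to Y$ is a morphism of pro-objects, classified by $\lim_j \colim_i \Hom(X_i, Y_j)$; picking a representative $f_{i,j} : X_i \to Y_j$ and applying $F$ gives, after the canonical identifications, a morphism $\widetilde F(X) \to \widetilde F(Y)$. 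Independence of the representative in each $\colim_i$ and compatibility across the $\lim_j$ both reduce to $F$ inverting $W$. Functoriality falls out, and the relation $\widetilde F \circ \iota = F$ is immediate since $\iota(X^+)$ contains the identity on $X^+$ as one of its indices.

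For uniqueness, the key observation is that every object of $\mathbf{Rig}^{\mathrm{qcqs}}$ is isomorphic to $\iota$ of one of its constituent formal schemes: if $X = \lim_i X_i$ is colocal, then the pro-system of admissible blow-ups of any single $X_i$ is cofinal in the defining system (by the colocal property applied to the transitions), so $X \cong \iota(X_i)$ in $\mathbf{Rig}^{\mathrm{qcqs}}$. Therefore any second extension $\widetilde F'$ is forced to satisfy $\widetilde F'(X) \cong F(X_i)$ on objects. On morphisms, colocality of $\iota(X^+)$ collapses $\Hom_{\mathbf{Rig}^{\mathrm{qcqs}}}(\iota(X^+), \iota(Y^+))$ to $\colim_{X' \to X^+} \Hom(X', Y^+)$ — i.e., equivalence classes of spans $X^+ \leftarrow X' \to Y^+$ with $X' \to X^+$ admissible — and $\widetilde F'$ must send such a span to $F(X' \to Y^+) \circ F(X' \to X^+)^{-1}$, which is determined by $F$ alone.

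The main obstacle is not any individual step but the pro-object bookkeeping: verifying independence of the representative in each colimit, compatibility under the transitions in the limit, and that these choices assemble into a genuine functor. All of this ultimately rests on $F$ inverting $W$, together with the stability of $W$ under the saturation operations recalled from lemma~\zref{I_MORP_ADMISSIBLE} of part~I, which is what makes the colocal property bite on both the object and morphism level.
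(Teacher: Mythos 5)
Your proof is correct and follows essentially the same route as the paper's: the paper works with an intermediate extension $\Pro_Z F$ to all of $\Pro_Z\mathbf{FSch}^\mathrm{qcqs}$ and then restricts along the coreflector, whereas you construct the extension directly on the colocal objects, but both rest on the same pro-object bookkeeping and on the cofinality of blow-ups among modifications. One small slip worth correcting: in dominating a modification $f$ by a blow-up the composition should be $g=f\circ h$ (one blows up the \emph{source} of $f$), not $g=h\circ f$; the two-out-of-six argument that follows is unaffected.
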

\begin{proof}There is a unique extension $\Pro_ZF$ of $F$ to $\Pro_Z\mathbf{FSch}^\mathrm{qcqs}$ that preserves limits of admissible blow-ups. This extension factors uniquely through the coreflective subcategory $\mathbf{Rig}^\mathrm{qcqs}$. Conversely, any factorisation of $F$ through $\mathbf{Rig}^\mathrm{qcqs}$ restricts along the coreflector to a functor $\Pro_Z\mathbf{FSch}^\mathrm{qcqs}\rightarrow\mathbf C$, which sends limits of admissible blow-ups to constant diagrams and hence is uniquely identified with $\Pro_ZF$.\end{proof}

\begin{lemma}[Local localisation]\label{RIG_TOPOS}Let $\mathbf S$ be a topos, $W$ a composable class of qcqs morphisms in $\mathbf S$. Let $\mathbf S^c$ be a finitely complete site for $\mathbf S$ whose objects are compact in $\mathbf S$ (so that $\mathbf S$ is coherent). Let $\zeta^*:\mathbf S^c\rightarrow\mathbf S^c[W^{-1}]$ be a left exact localisation.\footnote{To be precise, $\zeta^*$ is universal among functors to any category that invert the morphisms of $W$, and it is left exact. The existence of such a localisation depends only on the shape of $W$.}

There exists a coherent topos $\mathbf S_W$ and a qcqs geometric morphism $\zeta:\mathbf S_W\rightarrow\mathbf S$ whose pullback inverts morphisms represented by $W$, and is universal with respect to these properties. Moreover, there is a unique (up to unique isomorphism) commuting diagram
\[\xymatrix{\mathbf S^c \ar[r]^-{\zeta^*}\ar[d] & \mathbf S^c[W^{-1}] \ar[d]\\
\mathbf S \ar[r]^{\zeta^*} & \mathbf S_W}\]
that makes $\mathbf S^c[W^{-1}]$ into a finitely complete site of compact objects of $\mathbf S_W$.

If $\mathbf S^c$ is subcanonical and the morphisms in $W$ are local, then $\mathbf S^c[W^{-1}]$ is subcanonical.\end{lemma}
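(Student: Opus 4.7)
The approach is to construct $\mathbf{S}_W$ as the topos of sheaves on the localised category $\mathbf{S}^c[W^{-1}]$ equipped with a naturally induced topology, with $\zeta^*: \mathbf{S}^c \to \mathbf{S}^c[W^{-1}]$ playing the role of a morphism of sites. Throughout I will exploit the left exactness of $\zeta^*$: finite limits in $\mathbf{S}^c[W^{-1}]$ can be computed by lifting cones to $\mathbf{S}^c$, so in particular $\mathbf{S}^c[W^{-1}]$ is itself finitely complete, which is needed for the target site to be reasonable.

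First I would define the Grothendieck topology $\tau_W$ on $\mathbf{S}^c[W^{-1}]$ by declaring a sieve on $X$ to cover precisely when it contains the image under $\zeta^*$ of a covering sieve on some lift $X' \in \mathbf{S}^c$. Independence of the choice of lift follows from the calculus of fractions: two lifts are related by a zigzag in $W$, and covers pull back along $W$-morphisms because the latter are qcqs. The Grothendieck axioms then transfer from $\mathbf{S}^c$, with transitivity depending on the composability of $W$.

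Setting $\mathbf{S}_W := \mathrm{Sh}(\mathbf{S}^c[W^{-1}], \tau_W)$, coherence is inherited from $\mathbf{S}$: covers may be chosen finite because the objects of $\mathbf{S}^c$ are compact in $\mathbf{S}$. The morphism of sites $\zeta^*: \mathbf{S}^c \to \mathbf{S}^c[W^{-1}]$, being continuous and left exact, yields a qcqs geometric morphism $\zeta: \mathbf{S}_W \to \mathbf{S}$ whose pullback tautologically inverts $W$ on representables and hence on all compact objects. Universality is then a standard manipulation: any qcqs $\xi: \mathbf{T} \to \mathbf{S}$ with $\xi^*$ inverting $W$ restricts to a left exact functor on compact objects which factors uniquely through $\zeta^*$ as a morphism of sites, inducing the required $\mathbf{T} \to \mathbf{S}_W$. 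For the subcanonicity addendum, if $\mathbf{S}^c$ is subcanonical and the morphisms of $W$ are local (i.e.\ are themselves covers for the topology), then matching families for a $\tau_W$-cover lift to matching families in $\mathbf{S}^c$ up to $W$-refinement, and subcanonicity downstairs together with $W$-invariance of $\Hom$-sets out of representables supplies the unique gluing.

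The main obstacle I anticipate is the well-definedness and base-change stability of $\tau_W$ in the first step. Morphisms in $\mathbf{S}^c[W^{-1}]$ are equivalence classes of roofs, so pulling back a covering family entails lifting the roof to $\mathbf{S}^c$, computing the pullback there, and checking that the result still represents the intended pullback in the localisation. This hinges on the base change of a $W$-morphism remaining in $W$ up to composition with further $W$-morphisms; in the concrete application to admissible modifications this is essentially lemma I.\zref{I_MORP_ADMISSIBLE}, and in the abstract setting of the present lemma it must be teased out of left-exactness of $\zeta^*$ combined with the composability of $W$.
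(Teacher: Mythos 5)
Your proposal takes the same route as the paper: $\mathbf{S}_W$ is constructed as the topos of sheaves on $\mathbf{S}^c[W^{-1}]$ for the coverage induced by $\zeta^*$, and $\zeta$ is obtained from this morphism of sites; universality then falls out of the construction. The difference is one of emphasis. You spend most of your words checking that the induced coverage is well-defined and base-change stable (via the calculus of fractions and qcqs-ness of $W$), which the paper passes over in silence by simply declaring the ``coverage induced by $\zeta^*$.'' Conversely, the two points the paper actually argues are precisely the ones your sketch asserts without justification: (1) coherence of $\mathbf{S}_W$, which the paper gets by showing that the local isomorphisms in $\mathrm{PSh}(\mathbf{S}^c[W^{-1}])$ remain qcqs (because those in $\mathrm{PSh}\,\mathbf{S}^c$ are, and $W$ consists of qcqs morphisms), so that the representables stay compact after localisation --- your remark that ``covers may be chosen finite'' gestures at this but is not by itself the needed statement about quasi-separatedness; and (2) that $\zeta$ itself is qcqs, which in the paper's terminology means $\zeta^*$ preserves compact objects, argued by noting compacts are exactly the finite colimits of representables and that $\zeta^*$ preserves both. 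Neither omission is a wrong turn, but to complete the proof you would want to supply these two arguments rather than treat them as automatic.
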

\begin{proof}This is probably well-known, but I include a proof here to save myself sifting through the literature. Put $\mathbf S_W$ the category of sheaves on $\mathbf S^c[W^{-1}]$ with respect to the coverage induced by $\zeta^*$. It is initial among topoi fitting into a commuting square as above. 

That the objects of $\mathbf S^c$ are compact in $\mathbf S$ is to say that the local isomorphisms  in $\mathrm{PSh}\mathbf S^c$ are qcqs, that is, the base change of any such to a compact presheaf is compact. Since the morphisms of $W$ are qcqs, so are the induced system of local isomorphisms in $\mathrm{PSh}(\mathbf S^c[W^{-1}])$. Hence  the objects of $\mathbf S^c[W^{-1}]$ are compact in $\mathbf S_W$, which is therefore coherent. Moreover, since the compact objects of $\mathbf S$ (resp. $\mathbf S_W$) are the finite colimits of representable objects, and $\zeta^*$ preserves these, the latter is qcqs.\end{proof}

Applying lemma \ref{RIG_TOPOS}, we obtain the \emph{rigid topos} $\Sh\mathbf{Rig}$. The definition of (represented) \emph{open immersion} in $\Sh{}_Z\mathbf{FSch}$ induces a corresponding notion in $\Sh\mathbf{Rig}$. For a more explicit description of this notion, see the end of \S\ref{RIG_MODELS}.

\begin{defn}[Rigid spaces as sheaves]A \emph{rigid analytic space} is a locally representable sheaf on $\mathbf{Rig}^\mathrm{qcqs}$. The category of rigid analytic spaces is denoted $\mathbf{Rig}$. \end{defn}

The above arguments show that a rigid analytic space can locally be understood as a \emph{pro-formal scheme}; for further remarks on this perspective, see \S\ref{RIG_RZ}.


In traditional rigid analytic geometry, one usually restricts attention to formal schemes which are punctured along the entirety of their reduction.

\begin{defn}[Purely analytic]\label{RIG_PURELY}We call a rigid analytic space \emph{purely analytic} if it admits no morphism from any formal scheme, that is, if $\zeta_*X=\emptyset$. A rigid space is purely analytic if and only if it locally has a formal model on which all algebraic subschemes are marked.\end{defn}

\paragraph{Base change} Recall that in \S I.\zref{I_RIG_FSCH} we have defined a spatial geometric morphism
\[ \pi:\Sh\mathbf{FSch}_\Z\rightarrow\Sh\mathbf{FSch}_{\F_1} \]
so that $\F_1$-formal schemes may be base changed to ordinary formal schemes over $\Z$. By part \emph{i)} of lemma I.\zref{I_MORP_ADMISSIBLE}, up to saturation, this pullback functor also preserves $W$. It therefore descends to a spatial geometric morphism
\[\pi:\Sh\mathbf{Rig}_\Z\rightarrow\Sh\mathbf{Rig}_{\F_1},\qquad \pi^*:\mathbf{Rig}_{\F_1}\rightarrow\mathbf{Rig}_{\Z} \]
so that rigid analytic spaces may be base changed from $\F_1$ to $\Z$.

A case of particular interest is the following. Let $\F_1(\!(t)\!)$ be the Laurent series field with the $t$-adic topology, $K$ a non-Archimedean field (in the usual sense) equipped with a topological nilpotent $t$. There is a unique map $\F_1(\!(t)\!)\rightarrow K$ that preserves $t$. This gives a family of base change operations
\[\mathbf{Rig}_{\F_1(\!(t)\!)}\rightarrow\mathbf{Rig}_{K} \]
parametrised by the open unit disc of $K$.


\subsection{Models}\label{RIG_MODELS}

The local existence of formal schemes modelling rigid spaces is fundamental to all aspects of their study.

A \emph{model} for an object $X$ of $\Sh\mathbf{Rig}$ is an object of $\Sh_Z\mathbf{FSch}$ together with an isomorphism $X^+\setminus Z\cong X$. If $X\in\mathbf{Rig}$, we will want to assume that models of $X$ are formal schemes. Models of $X$ form a category $\mathbf{Mdl}_X$, the fibre of $\zeta^*$ over $X$. We say that $X$ has \emph{enough models} if $\mathbf{Mdl}_X$ is cofiltered.

If $X$ is qcqs, it has enough models. More generally:

\begin{prop}A paracompact rigid space has enough models.\end{prop}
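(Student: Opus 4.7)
The plan is to exploit paracompactness to reduce to the qcqs case and then glue local models. By hypothesis, $X$ admits a locally finite open cover $\{U_i\}_{i\in I}$ by qcqs open subspaces. Each $U_i$, being qcqs, has enough models—more precisely, $\mathbf{Mdl}_{U_i}$ is cofiltered, essentially by construction of $\mathbf{Rig}^\mathrm{qcqs}$ in \S\ref{RIG_RAYNAUD} (its objects are the admissible blow-ups of any given chosen model). The same holds for every pairwise and finite intersection of the $U_i$, since these are again qcqs.

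To produce a single model, I first pick an arbitrary $U_i^+\in\mathbf{Mdl}_{U_i}$ for each $i$. On a non-empty overlap $U_i\cap U_j$, the restrictions $U_i^+|_{U_i\cap U_j}$ and $U_j^+|_{U_i\cap U_j}$ both live in the cofiltered $\mathbf{Mdl}_{U_i\cap U_j}$, hence are dominated by a common admissible modification. By local finiteness, each $U_i$ participates in only finitely many such constraints, so the finite collection of required modifications of $U_i^+$ can itself be dominated by a single admissible modification of $U_i^+$ (using the cofilteredness of $\mathbf{Mdl}_{U_i}$). Solving this finite system of refinements simultaneously on each $U_i$ yields a new family of models compatible on double overlaps; a further round of the same argument enforces the cocycle condition on triples. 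These modified models then glue (using the open immersion structure on $\Sh{}_Z\mathbf{FSch}$) to a formal scheme $X^+$ modelling $X$.

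For cofilteredness: given $X_1^+, X_2^+\in\mathbf{Mdl}_X$, I restrict both to each $U_i$ and take a local common refinement by cofilteredness of $\mathbf{Mdl}_{U_i}$; the same local-finiteness argument then welds these into a global admissible modification dominating $X_1^+$ and $X_2^+$. Parallel morphisms $f,g\colon X_1^+\rightrightarrows X_2^+$ that agree on the rigid space can similarly be equalised on each $U_i$ by an admissible modification of $X_1^+|_{U_i}$, and these local equalisers are glued by the same procedure.

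The main obstacle is the combinatorial bookkeeping required to coherently match local models on double (and then triple) overlaps; this is precisely where local finiteness is essential, since without it the sequence of refinements demanded of a given $U_i^+$ could fail to be bounded by any single admissible modification. I would also want to verify that admissible modifications are stable under restriction to qcqs open subspaces, so that the notions on $U_i$ and $U_i\cap U_j$ match up—this is a direct consequence of their base-change stability (lemma I.\zref{I_MORP_ADMISSIBLE}).
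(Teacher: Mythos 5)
Your plan---locally finite qcqs cover, cofilteredness on each piece, glue via local finiteness---is the right shape. The paper states this proposition without proof, so there is no argument to compare against; but as written, the gluing step has a genuine gap that you only half-acknowledge under the heading of ``combinatorial bookkeeping.''

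The problem is cascading refinements. When you modify $U_i^+$ by an admissible blow-up so that its restriction to $U_i\cap U_j$ matches a chosen refinement $V_{ij}$, the blow-up centre need not avoid the closures of the \emph{other} overlaps $U_i\cap U_k$ inside $U_i^+$, so the restriction of the new $U_i^+$ to $U_i\cap U_k$ changes too. A matching already achieved with $U_k^+$ on that overlap is thereby broken, forcing a further refinement of $U_k^+$, which propagates outward. Local finiteness bounds the number of constraints attached to a single $U_i$, but it does not by itself bound the number of rounds of re-refinement that $U_i^+$ will undergo, because the constraints are coupled. The sentence ``solving this finite system of refinements simultaneously on each $U_i$ yields a new family of models compatible on double overlaps'' is therefore unjustified as stated, and ``a further round of the same argument enforces the cocycle condition on triples'' inherits the same difficulty. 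One standard repair (cf.\ \cite{FujiKato}) is to well-order the index set and run a transfinite recursion, extending a compatible family of formal models over $\bigcup_{i<\alpha}U_i$ to one over $\bigcup_{i\le\alpha}U_i$; local finiteness is then what guarantees that each $U_i^+$ stabilises after finitely many stages, so that at limit ordinals one still has an honest formal scheme rather than merely a pro-object. Absent such a device, or some additional control over the supports of the admissible modifications (e.g.\ arranging the blow-up centres to be disjoint from the relevant overlap closures), the claim that the local refinements admit a coherent simultaneous solution does not follow from what you have written.
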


The natural (counit) map $(X^+,Z)\rightarrow(X^+,\emptyset)$ in $\Sh_Z\mathbf{FSch}$ pulls back to a morphism \[j:X\rightarrow X^+\] in $\Sh\mathbf{Rig}$; conversely, this morphism characterises the model $(X^+,Z)$, as we set $Z$ equal to the union of all closed subschemes of $X^+$ whose pullback to $X^+$ is empty. We will usually denote the data of the formal model in terms of this morphism $j$.



\begin{defns}[Representability conditions]\label{RIG_REPS}A morphism $f:X\rightarrow Y$ in $\Sh\mathbf{Rig}$ is said to be \emph{representable by formal schemes} if, for any qcqs rigid space $Z$ and morphism $Z\rightarrow Y$, there exists a model of the base change $X\times_YZ\rightarrow Z$ such that the square
\[\xymatrix{
X\times_YZ \ar[r]\ar[d] & (X\times_YZ)^+\ar[d]^{f^+} \\
Z \ar[r]    & Z^+
}\]
is Cartesian. In particular, $f$ admits models locally on $Y$. As a morphism in $\Sh_Z\mathbf{FSch}$, $f^+$ is represented by a formal scheme in the sense of \S\ref{RIG_RAYNAUD}. If, more generally, $X$ is a union of open subobjects that are representable by formal schemes over $Y$, we say that $f$ is \emph{locally representable by formal schemes}.

Let $\mathbf P$ be a property (resp. source-local property) of morphisms in $\Sh\mathbf{FSch}$. We say that a morphism in $\Sh\mathbf{Rig}$ \emph{has property ${}^+\mathbf P$} if it is representable (resp. locally representable) by formal schemes, and moreover the (local) models $f^+$ can be chosen having property $\mathbf P$.

With $\mathbf P$ equal to one of the following properties, we suppress the superscript plus:
\begin{enumerate}\item representable by schemes;
\item open immersion;
\item locally of finite type, presentation
\item (formal) embedding;
\item (formally) finite, integral
\item (formally) projective.\end{enumerate}So, for example, a morphism of rigid spaces is projective if it admits a projective model. We specifically discuss the cases of separated, overconvergent, and proper morphisms in \S\ref{SEP}.
\end{defns}

Suppose that $X$ admits enough models. Any quasi-compact open immersion $U\hookrightarrow X$ admits a global model, and is thus given by the data of a model $X^+$ of $X$ and a quasi-compact open subset $U^+$ of $X^+$. Note that even if $X$ is ${}^+$Noetherian, so that $X^+$ is a Noetherian topological space, $X$ itself typically has many non-quasi-compact open subsets.

\subsection{Topologies on $\F_1$-algebras}\label{RIG_LC}
The remarks of this section and the next are valid in principle for $\F_1$ and $\Z$, but to avoid technicalities I am only claiming precision for the $\F_1$ case. The study of the $\Z$ case is in any case well-established \cite{Abbes,FujiKato}.

As in \S I.\zref{I_MORP_NORM}, we will consider pairs $(A;A^+)$ consisting of an $\F_1$-algebra $A^+$ and an $A^+$-algebra $A$. Subsets of $A$ invariant under the \emph{ring of integers} $A^+$ will be called \emph{discs}.

Except where otherwise specified, our pairs will satisfy the following assumption:
\begin{itemize}\item $A^+$ is a saturated submonoid of $A$. \hfill \emph{relatively normal} \end{itemize}
In other words, if $f\in A$ and $f^n\in A^+$, then $f\in A^+$. By lemma I.\zref{I_RIG_THELEMMA}, any affine $A$-admissible modification of $\Spec A^+$ sits between $A^+$ and its saturation in $A$.

\

A \emph{locally convex topology} on $(A;A^+)$ is a separated filtration by discs, which are declared \emph{open}, which as in \S I.\zref{I_RIG_PROD} we close under intersection and enlargement of discs. In particular, $A^+$ attains under the induced topology the structure of a linearly topologised ring. In fact, \emph{in light of lemma I.\zref{I_RIG_PRO}}, $A$ is a pro-discrete $A^+$-module.\footnote{Emphasis added here to highlight the fact that this is \emph{not} true over $\Z$.}

As before, we will always want to assume
\begin{itemize}
\item the product of two open discs is open. \hfill \emph{weak adicity}\end{itemize}
This implies that $A^+$ is an adic ring in the sense of \S I.\zref{I_RIG_PROD}. 

\begin{remark}\label{RIG_ADIC}The reason I have added the prefix \emph{weak} is that we might strengthen it to the condition
\begin{itemize}\item $A^+$ - the empty product of discs - is open in $A$. \hfill \emph{adicity}\end{itemize}
which is traditional, and perhaps aesthetically pleasing as a complement to the previous axiom, but not strictly necessary for what follows. The geometric significance of the generalisation from adicity to `weak' adicity is that we will be allowed to puncture formal schemes along closed, but not necessarily algebraic, formal subschemes. The only cost is that the completed localisations are not necessarily open homomorphisms; I do not know anything that would need such a hypothesis.\end{remark}

A pair with these additional data fixed is called a \emph{locally convex $\F_1$-algebra}. A ring homomorphism $A\rightarrow B$ preserving the ring of integers induces a pullback map on discs; this map is required to preserve the filtration (i.e. be continuous).

\

The structure sheaves in rigid geometry also have the following:
\begin{itemize}\item $A$ is a localisation of $A^+$. \hfill \emph{Tate}\end{itemize}
The nomenclature is an approximation to Huber's terminology `Tate ring' \cite{Hubook}. This condition is not preserved by limits, and so in sheaves of Tate rings the property holds only locally. We do at least have

\begin{lemma}\label{RIG_TATEPROD}Any product of Tate rings is Tate.\end{lemma}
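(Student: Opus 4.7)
The plan is to construct $\prod_i A_i$ explicitly as a localisation of $\prod_i A_i^+$. For each Tate pair $(A_i;A_i^+)$, let $S_i\subseteq A_i^+$ denote the multiplicative subset of elements whose image in $A_i$ is invertible, so that the structural morphism $A_i^+\to A_i=A_i^+[S_i^{-1}]$ is universal for inverting $S_i$. I would then set $S:=\prod_iS_i\subseteq\prod_iA_i^+$, namely the collection of tuples all of whose components become invertible in their respective $A_i$.

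Every element of $S$ is invertible in $\prod_iA_i$ (with inverse given by the tuple of componentwise inverses), so the universal property of localisation yields a canonical comparison morphism
\[ \varphi:\Bigl(\prod_iA_i^+\Bigr)[S^{-1}]\longrightarrow\prod_iA_i. \]
The remainder of the proof consists in showing that $\varphi$ is a bijection.

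For surjectivity, any element $(a_i)_i\in\prod_iA_i$ may be represented componentwise as $a_i=b_i/s_i$ with $b_i\in A_i^+$ and $s_i\in S_i$, so that $(a_i)_i=(b_i)_i/(s_i)_i$ manifestly lies in the image of $\varphi$. For injectivity, two fractions $(b_i)_i/(s_i)_i$ and $(c_i)_i/(t_i)_i$ coincide in $\prod_iA_i$ iff they coincide componentwise; by the monoid localisation relation this yields, for each $i$, a witness $u_i\in S_i$ satisfying $u_ib_it_i=u_ic_is_i$ in $A_i^+$. Assembling these witnesses into the tuple $(u_i)_i\in S$ already identifies the two fractions in $(\prod_iA_i^+)[S^{-1}]$.

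There is no serious obstacle: the lemma is essentially the observation that in the monoid (i.e.\ $\F_1$) setting, a product of localisations is again a localisation—namely at the componentwise product of the respective multiplicative subsets. The one subtlety worth flagging is the correct choice of $S$: it is not the set of all tuples in $\prod_iA_i^+$ that happen to be invertible in the product, nor an arbitrary product of multiplicative subsets, but precisely $\prod_iS_i$ with the $S_i$ identified as above. It is also worth noting in passing that the relatively normal hypothesis is preserved by the product—if $(a_i)_i^n\in\prod_iA_i^+$ then each $a_i^n\in A_i^+$ and so each $a_i\in A_i^+$—so the construction stays inside the pair category.
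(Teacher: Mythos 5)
Your proof is correct and follows the same overall strategy as the paper's: exhibit $\prod_iA_i$ as a localisation of $\prod_iA_i^+$ at an explicit multiplicative subset. The difference is in the choice of that subset, and it is a genuinely meaningful one.

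The paper localises at the set $T$ of tuples of the form $(1,\ldots,f,\ldots,1)$ with $f\in S_i$ in the $i$th slot and $1$ everywhere else. The multiplicative closure of $T$ consists of tuples that differ from the identity in only \emph{finitely} many coordinates. For a finite index set this coincides with your $S=\prod_iS_i$, but for an infinite index set it is strictly smaller, and $\bigl(\prod_iA_i^+\bigr)[T^{-1}]$ is then genuinely smaller than $\prod_iA_i$: an element like $(s_i^{-1})_i$ with each $s_i\in S_i$ a non-unit cannot be written as a fraction whose denominator is $1$ in almost every slot. So your choice of $S=\prod_iS_i$ is the right one, and your surjectivity and injectivity checks (each using the componentwise witness from the monoid localisation relation) close the argument cleanly. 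In short, your proof is a small but real improvement: it establishes the lemma as stated, for arbitrary products, whereas the multiplicative set named in the paper's proof would need to be enlarged to $\prod_iS_i$ (or the lemma restricted to finite products) for the argument to go through. The aside about relative normality being preserved is accurate but not needed for the statement, since the lemma is explicitly about Tate rings without the relatively normal hypothesis.
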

\begin{proof}Let $(A_i=A_i^+[S_i^{-1}];A_i^+)$ be a family of Tate rings. Their product $\prod_iA_i$ is a localisation of $\prod_iA_i^+$ at all elements of the form $(1,\ldots,f,\ldots)$ with $f\in S_i$ in the $i$th position.\end{proof}

\begin{eg}\label{POLY_VAL}If $H$ is a totally ordered group, the special $\F_1$-algebra $\F_1[z^H]$ could be called the `equal characteristic one valuation field with value group $H$', and denoted accordingly $\F_1(\!(t^{-H})\!)$. This notation presupposes that we consider it equipped with the `valuation ring' $\F_1[\![t^{-H}]\!]$ associated to $H^\circ$, and the evident $H$-indexed filtration by $\F_1[\![t^{-H}]\!]$-submodules. The sign convention is such that $t^{-\lambda}$ converges to $0$ as $\lambda\rightarrow-\infty$ in $H$.

The point of the remarks of \S\ref{POLY_NONPOS} is that the $\F_1$-algebra $\sh O\{\Delta\}$ associated to $\Aff_\Delta(N,H)$ is Hausdorff with respect to the $t$-adic topology, that is, it is a Banach $\F_1(\!(t^H)\!)$-algebra.\end{eg}

\subsection{Affine presentation}\label{RIG_AFFINE}

The passage from the categorical to the algebraic picture of rigid analysis rests on a simple proposition:

\begin{prop}\label{RIG_SAT}Let $X$ be a rigid space with model $j:X\rightarrow X^+$. Then $j_*\sh O_X^+$ is the integral closure of the image of $\sh O_{X^+}$ inside $j_*\sh O_X$.\end{prop}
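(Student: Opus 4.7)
The statement is local on $X^+$, so reduce to the affine case $X^+=\Spec A^+$ and $X=\Spec A$ for a Tate pair $(A;A^+)$; the goal becomes to identify $\Gamma\sh O_X^+$ with the integral closure $\tilde A^+$ of (the image of) $A^+$ inside $A$. The main tool is the Raynaud presentation of \S\ref{RIG_RAYNAUD}, which expresses $X$ as a cofiltered limit of models $Y^+\to X^+$ with transition maps admissible modifications. Taking global sections of the ring-of-integers sheaf yields a filtered colimit
\[ \Gamma\sh O_X^+ \;=\; \varinjlim_{Y^+\to X^+}\Gamma\sh O_{Y^+}\;\subseteq\;A. \]

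The containment $\Gamma\sh O_X^+\subseteq\tilde A^+$ is then immediate from lemma I.\zref{I_RIG_THELEMMA}, which states that every affine $A$-admissible modification of $\Spec A^+$ is of the form $\Spec B^+$ with $A^+\subseteq B^+\subseteq\tilde A^+$. Applied affine-locally on each $Y^+$, this forces each $\Gamma\sh O_{Y^+}$ to lie inside $\tilde A^+$, and the bound persists after passing to the colimit.

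The reverse containment is the harder direction. Given $f\in A$ with $f^n\in A^+$, the natural candidate model is the finite integral extension $A^+\hookrightarrow A^+[f]\subseteq A$. Because $f$ already defines a function on $\Spec A$, the morphism $\Spec A^+[f]\to\Spec A^+$ is an isomorphism over the unmarked open $X^+\rigless Z=\Spec A$. The main obstacle is now to verify that this finite birational morphism is admissible, i.e.\ is dominated by (a composite of) admissible blow-ups. For this, one identifies $\Spec A^+[f]$ with a chart of an admissible blow-up of $\Spec A^+$ along an ideal supported inside $Z$---the natural choice being the ideal generated by $f^n$ together with a suitable element of the multiplicative set whose inversion defines $A$ as a Tate localisation of $A^+$, so that the centre of the blow-up meets $X^+\rigless Z$ in the empty set. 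Once established, this produces an admissible modification of $X^+$ on which $f$ is a section of $\sh O^+$, placing $f$ in the colimit $\Gamma\sh O_X^+$ and completing the argument.
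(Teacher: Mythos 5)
The paper disposes of this proposition in a single line, citing lemma~I.\zref{I_RIG_THELEMMA} (for $\F_1$) and lemma~I.\zref{I_A_PROJ_FINITE} (for $\Z$); both directions of the equality are packaged into those references, the upper bound via finiteness of global sections over projective morphisms, and the lower bound via realising the saturation itself by an admissible modification. Your upper bound is fine and agrees with the intended argument. The problem is in the lower bound, where your sketch has two genuine gaps.

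First, the ideal you propose to blow up along is wrong. If $f\in A$ with $f^n\in A^+$, write $f=a/s$ with $a\in A^+$ and $s\in S$. Blowing up the ideal generated by $f^n$ together with some $s'\in S$ produces, on the chart where $s'$ is a generator, the section $f^n/s'$ (or a power of $f/s'$, depending on choices), not $f$ itself. The ideal that actually produces $f=a/s$ as a local section is $(a,s)$; the hypothesis $f^n\in A^+$ plays no role in forming the blow-up, only in guaranteeing that $f$ is integral over $A^+$.

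Second, and more seriously, even after blowing up along $(a,s)$, the element $f$ is a section only on the chart where $s$ generates the exceptional ideal; on the complementary chart the natural section is $s/a=1/f$, and there is no reason for $f$ to extend across it. What you actually need is that $f$ be a \emph{global} section of some admissible modification. The clean way to arrange this is not via a blow-up at all but by exhibiting the finite integral extension $\Spec A^+[f]\rightarrow\Spec A^+$ directly as an admissible modification (it is affine, hence $\Gamma\sh O = A^+[f]\ni f$, it is integral because $f^n\in A^+$, and it is an isomorphism after inverting $S$); one then has to check this lands in the class $W$, which is precisely where the appeal to lemma~I.\zref{I_RIG_THELEMMA} (or, over $\Z$, I.\zref{I_A_PROJ_FINITE}) is doing its work. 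Your phrase ``once established'' flags exactly this unclosed step, so the argument as written does not complete the containment $\tilde A^+\subseteq\Gamma\sh O_X^+$.
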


This is an immediate consequence of lemma I.\zref{I_RIG_THELEMMA} for $\F_1$ and I.\zref{I_A_PROJ_FINITE} for $\Z$. 

Let us denote by ${}_Z\mathbf{FSch}^\mathrm{aff}$ the full subcategory of ${}_Z\mathbf{FSch}$ whose objects $(X^+,Z)$ are such that $X^+$ is affine and $Z$ is a union of principal divisors. It generates ${}_Z\mathbf{FSch}^\mathrm{div}$, and since one can always admissibly blow up a marked formal scheme to get $Z$ a union of Cartier divisors, it is also a site for the rigid topos $\Sh\mathbf{Rig}$.

Also, for this section, $\mathrm{Alg}$ will denote the category of (not necessarily relatively normal) Tate algebras (\S\ref{RIG_LC}).\footnote{One could alternatively take for ${}_Z\mathbf{FSch}^\mathrm{aff}$ the category on which $Z$ is formed of Cartier divisors. This would correspond on the algebraic side to the assumption that $A^+\subseteq A$.}

Pushing forward the structure sheaf from $\mathbf{FSch}^\mathrm{aff}$, one obtains a sheaf $\sh O^+$ on ${}_Z\mathbf{FSch}^\mathrm{aff}$ of adic algebras. Let $S\subseteq\sh O^+$ denote the multiplicatively closed subsheaf of local sections $f\in\sh O^+$ such that $\sh O^+/f$ is supported on $Z$. Then $\sh O:=\sh O^+[S^{-1}]$ is a sheaf of Tate algebras on ${}_Z\mathbf{FSch}^\mathrm{aff}$ with ring of integers $\sh O^+$.

Let $(A;A^+)$ be a Tate algebra. The set of elements of $A^+$ that become invertible in $A$ define a collection of Cartier divisors on $\Spec A^+$, and hence an object of ${}_Z\mathbf{FSch}^\mathrm{aff}$. This construction is contravariantly functorial
\[ \Spec:\Alg\rightarrow{}_Z\mathbf{FSch}^\mathrm{aff} \]
and it is a spectrum functor in the sense of ringed toposes, that is, it is \emph{left adjoint} to the structure sheaf
\[ \sh O:{}_Z\mathbf{FSch}^\mathrm{aff} \rightarrow\Alg. \]
In fact, the two functors are inverse equivalences of categories.

\begin{defn}A rigid analytic space is said to be \emph{affine} if it admits a model in ${}_Z\mathbf{FSch}^\mathrm{aff}$. The full subcategory of $\mathbf{Rig}$ whose objects are affine is denoted $\mathbf{Rig}^\mathrm{aff}$.\end{defn}

The composite of the Spec functor with the localisation
\[ {}_Z\mathbf{FSch}^\mathrm{aff} \rightarrow \mathbf{Rig}^\mathrm{aff} \]
remains left adjoint to $\sh O$, but it is no longer an equivalence: the counit $A\rightarrow\sh O^+(\Spec A)$ need not be an isomorphism. To correct this, we must invert the morphisms in $\Alg$ dual to affine admissible blow-ups. 

We have identified these homomorphisms: by proposition \ref{RIG_SAT}, they are precisely the \emph{$Z$-admissible integral extensions}. In particular, $\sh O(\Spec A)$ is exactly the \emph{relative normalisation} of $A$. This is why we usually - indeed, henceforth - make the assumption that Tate rings are relatively normal.

\begin{prop}$\mathrm{Spec}$ and $\sh O$ form inverse anti-equivalences between $\mathbf{Rig}^\mathrm{aff}$ and the category of relatively normal Tate rings.\end{prop}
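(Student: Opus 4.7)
The plan is to bootstrap the known equivalence $\Spec:\Alg\tilde\to{}_Z\mathbf{FSch}^\mathrm{aff}$ through the localisation ${}_Z\mathbf{FSch}^\mathrm{aff}\to\mathbf{Rig}^\mathrm{aff}$ by identifying the class of morphisms inverted with a reflection on the algebraic side, whose essential image is precisely the relatively normal Tate rings.

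First I would spell out which morphisms of Tate rings correspond under $\Spec$ to affine admissible blow-ups. By definition the latter generate the class $W$ restricted to ${}_Z\mathbf{FSch}^\mathrm{aff}$; under the equivalence with $\Alg$ they become the $Z$-admissible integral extensions $(A;A^+)\to(A;B^+)$ identified in the text just before the proposition. The key input here is proposition \ref{RIG_SAT} (i.e.\ lemma I.\zref{I_RIG_THELEMMA}): after an affine admissible blow-up of $\Spec A^+$ the global sections of the integral structure sheaf land in $A$ and sit between $A^+$ and its integral closure $\widetilde{A^+}\subseteq A$. Conversely any such intermediate integral extension is dominated by an affine admissible blow-up.

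Next I would construct the reflection. For any Tate ring $(A;A^+)$ let $\widetilde A^+\subseteq A$ be the integral closure of $A^+$ in $A$; the canonical map $(A;A^+)\to(A;\widetilde A^+)$ is an admissible integral extension, $(A;\widetilde A^+)$ is relatively normal by construction (integral closures in a fixed ambient ring are saturated), and any morphism $(A;A^+)\to(B;B^+)$ into a relatively normal Tate ring factors uniquely through it: a second iteration of integral closure is trivial, and a homomorphism of pairs must send integral elements to integral elements of $B^+=\widetilde{B^+}$. Thus the inclusion of relatively normal Tate rings into $\Alg$ is reflective, and every admissible integral extension of $(A;A^+)$ fits inside the maximal one $A^+\hookrightarrow\widetilde A^+$; in particular between two relatively normal Tate rings the only admissible integral extension is the identity. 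This exhibits $\Alg[W^{-1}]$ as the category of relatively normal Tate rings.

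Finally I would package everything: $\Spec$ descends to a functor $\{\text{rel.\ normal Tate rings}\}^{\mathrm{op}}\to\mathbf{Rig}^\mathrm{aff}$ which is essentially surjective (every affine rigid space has a model in ${}_Z\mathbf{FSch}^\mathrm{aff}$, whose image corresponds under $\Spec$ to some Tate ring that may then be normalised) and fully faithful (the already-fully-faithful $\Spec:\Alg^{\mathrm{op}}\to{}_Z\mathbf{FSch}^\mathrm{aff}$ descends to the two matching localisations). The inverse is computed by $\sh O$: proposition \ref{RIG_SAT} says directly that $\sh O(\Spec A)=(\widetilde A;\widetilde A^+)$, which equals $(A;A^+)$ on relatively normal inputs, giving the required counit isomorphism. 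The main obstacle is really just the bookkeeping of the second paragraph, namely checking that the admissible integral extensions admit a terminal object inside $A$ and are stable in a way compatible with the two-variable localisation — but once one allows proposition \ref{RIG_SAT} this reduces to the standard fact that integral closure is idempotent.
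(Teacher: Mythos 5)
Your proposal is correct and follows exactly the route the paper sketches in the paragraphs immediately preceding the proposition: identify (via Proposition~\ref{RIG_SAT}, i.e.\ Lemma~I.\zref{I_RIG_THELEMMA}) the morphisms in $\Alg$ dual to affine admissible blow-ups as the $Z$-admissible integral extensions, observe that relative normalisation $A^+\mapsto\widetilde{A^+}$ is a reflector onto the relatively normal Tate rings inverting precisely these, and then descend the existing equivalence $\Spec:\Alg\tilde\to{}_Z\mathbf{FSch}^\mathrm{aff}$ and the adjunction $\Spec\dashv\sh O$ through the two matching localisations. The paper leaves this as an implicit consequence of the surrounding discussion; you have simply written it out in full, with no gaps and no deviation in method.
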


Dually, this result reflects the fact that the localisation on affine objects has a fully faithful right adjoint $\mathbf{Rig}^\mathrm{aff}\hookrightarrow{}_Z\mathbf{FSch}^\mathrm{aff}$. In other words,

\begin{cor}An affine rigid space $X$ has a \emph{unique} affine, relatively normal model given by the spectrum of $\sh O^+(X)$.\end{cor}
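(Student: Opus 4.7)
The plan is to read the preceding proposition as equipping $\mathbf{Rig}^\mathrm{aff}$ with a fully faithful right adjoint $R$ into ${}_Z\mathbf{FSch}^\mathrm{aff}$, whose essential image is the full subcategory of relatively normal affine marked formal schemes, and then to identify $R(X)$ explicitly as $\Spec\sh O^+(X)$. Existence and uniqueness then fall out of the adjunction and the fact that $R$ is fully faithful.

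For existence, I would combine the pre-localisation equivalence $\Spec:\Alg\stackrel{\sim}{\to}{}_Z\mathbf{FSch}^\mathrm{aff}$ with the anti-equivalence $\sh O:\mathbf{Rig}^\mathrm{aff}\stackrel{\sim}{\to}\Alg^{\mathrm{n}}$ furnished by the proposition (here $\Alg^{\mathrm{n}}$ denotes the subcategory of relatively normal Tate algebras). Setting $R(X):=\Spec\sh O^+(X)\in{}_Z\mathbf{FSch}^\mathrm{aff}$, the object $R(X)$ is relatively normal by construction (its Tate algebra of global sections lies in $\Alg^{\mathrm{n}}$); and because the proposition's anti-equivalence is precisely the descent of $\sh O$ along the localisation, the image of $R(X)$ in $\mathbf{Rig}^\mathrm{aff}$ is $X$. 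Hence $R(X)$ is an affine, relatively normal model of $X$.

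For uniqueness, suppose $Y^+\in{}_Z\mathbf{FSch}^\mathrm{aff}$ is any affine, relatively normal model of $X$. Applying proposition \ref{RIG_SAT} to the model morphism $j:X\to Y^+$ identifies $\sh O^+(X)$ with the integral closure of $\sh O^+(Y^+)$ inside $\sh O(X)$; since $Y^+$ is relatively normal this closure is already $\sh O^+(Y^+)$, so the pair $(\sh O(Y^+);\sh O^+(Y^+))$ is isomorphic to $(\sh O(X);\sh O^+(X))$ as Tate algebras. The pre-localisation equivalence $\Spec:\Alg\stackrel{\sim}{\to}{}_Z\mathbf{FSch}^\mathrm{aff}$ then yields $Y^+\cong\Spec\sh O^+(X)$.

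There isn't really a substantive obstacle: the only non-formal ingredient is proposition \ref{RIG_SAT}, and that has already been proved. It is what forces relative normality to be the correct saturation condition and makes the algebraic description of the unit of the localisation adjunction transparent; once that is in hand, everything else is a matter of chasing the two nested (anti-)equivalences $\Alg\cong{}_Z\mathbf{FSch}^\mathrm{aff}$ and $\Alg^{\mathrm{n}}\cong\mathbf{Rig}^\mathrm{aff}$.
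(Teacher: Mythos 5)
Your proposal is correct and matches the paper's approach: the paper presents this corollary as a restatement of the fact that the affine localisation has a fully faithful right adjoint $\mathbf{Rig}^\mathrm{aff}\hookrightarrow{}_Z\mathbf{FSch}^\mathrm{aff}$, which is exactly what you unpack via the two equivalences $\Alg\cong{}_Z\mathbf{FSch}^\mathrm{aff}$ and $\Alg^{\mathrm n}\cong\mathbf{Rig}^\mathrm{aff}$ together with proposition \ref{RIG_SAT}. The only step left tacit in your uniqueness argument is that $\sh O(X)=\sh O(Y^+)$ for any model $Y^+$ (the Tate ring itself is unchanged by admissible modifications, only the ring of integers grows), which is needed so that the integral closure of $\sh O^+(Y^+)$ in $\sh O(X)$ is computed in the same ring in which $Y^+$ is assumed relatively normal.
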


We conclude by describing the presentation of $\Sh\mathbf{Rig}$ by means of $\mathbf{Rig}^\mathrm{aff}$. An open immersion in $\mathbf{Rig}^\mathrm{aff}$ with target $\Spec A$ is given by the base change to $A$ of a principal affine subset of an admissible modification of $\Spec A^+$, which up to relative normalisation is just an admissible blow-up. It has co-ordinate algebra of the form
\[ A\rightarrow A\{T/s\} = \left(A\tens_{A^+}A^+\{T/s\};A^+\{T/s\} \right) \]
with $T$ of finite type, $s\in T$ and $TA=A$. Since $T/s$ is an open disc, $A\{T/s\}$ is a Banach module over $A$. 

Conversely, any \emph{completed localisation} of $A$ - that is, algebra of the form $A\{T/s\}$ with $T$ finitely generated - comes from an affine open immersion. In other words, a morphism $\Spec B\rightarrow \Spec A$ in $\mathbf{Rig}^\mathrm{aff}\subset\Alg^\mathrm{op}$ is an open immersion if and only if $B$ is the relative normalisation of a completed localisation of $A$.

The following are equivalent for a finite family of completed localisations $A\rightarrow A\{T_i/s_i\}$:
\begin{enumerate}
\item $\Spec A = \bigcup_i \Spec A\{T_i/s_i\}$;
\item there exist admissible blow-ups $X^+\rightarrow\Spec A^+$ and $U_i\rightarrow\Spec A^+\{T_i/s_i\}$  such that $U_i^+\hookrightarrow X^+$ and $X^+= \bigcup_iU_i^+$;
\item $A\rightarrow \prod_iA\{T_i/s_i\}$ is a universally effective monomorphism in $\Alg_{\F_1}$.
\end{enumerate}
These criteria can be used to present the rigid topos and $\mathbf{Rig}$ in terms of topological algebra.

\subsection{Summary of categories}

I introduced a lot of categories in this section, so let me draw most of the main players out here as what I hope to be a handy reference utility.
\[\xymatrix{
\mathbf{Sch}_{\F_1}^\mathrm{aff}\ar[r]\ar[dd] & \mathbf{FSch}^\mathrm{aff}_{\F_1}\ar[dd] & {}_Z\mathbf{FSch}^\mathrm{aff}_{\F_1} \ar[d]\ar[r]\ar[l] & \mathbf{Rig}^\mathrm{aff}_{\F_1}\ar[d] \\
&& {}_Z\mathbf{FSch}^\mathrm{qcqs}_{\F_1} \ar[d]\ar[r] & \mathbf{Rig}^\mathrm{qcqs}_{\F_1}\ar[d] \\
\mathbf{Sch}_{\F_1}\ar[r]\ar[dr] & \mathbf{FSch}_{\F_1}\ar[d] & {}_Z\mathbf{FSch}_{\F_1} \ar[d]\ar[r]\ar[l] & \mathbf{Rig}_{\F_1}\ar[d] \\
& \Sh\mathbf{FSch}_{\F_1} & \Sh{}_Z\mathbf{FSch}_{\F_1} \ar[r]^-{\zeta^*}\ar[l] & \Sh\mathbf{Rig}_{\F_1}
}\]
I remind the reader of the following:
\begin{itemize}\item The categories in the top row are opposite to the category of $\F_1$-algebras, pro-discrete $\F_1$-algebras, Tate $\F_1$-algebras, and relatively normal Tate $\F_1$-algebras, respectively.
\item The topoi in the bottom row are each generated by any category vertically above, \emph{except} for ${}_Z\mathbf{FSch}^\mathrm{aff}_{\F_1}\subset \Sh{}_Z\mathbf{FSch}_{\F_1}$ (\S\ref{RIG_AFFINE}). The arrows in the bottom row are the pullback functors for geometric morphisms.
\item $\Sh\mathbf{FSch}_{\F_1}$ is actually a presheaf category on $\mathbf{FSch}^\mathrm{aff}_{\F_1}$.
\item The horizontal arrow in the top-right and the one immediately below are left exact categorical localisations.
\end{itemize}
We also have used intermediate marking categories \[{}_Z\mathbf{FSch}^\nu_{\F_1}\subset{}_Z\mathbf{FSch}^\mathrm{inv}_{\F_1}\subset{}_Z\mathbf{FSch}_{\F_1}^\mathrm{div}\subset{}_Z\mathbf{FSch}_{\F_1}\] whose markings are relatively normal Cartier, Cartier, and divisorial, respectively.

\subsection{Riemann-Zariski space}\label{RIG_RZ}

We have constructed the category of qcqs rigid analytic spaces as a subcategory of the category of pro-formal schemes; by definition, the rigid space associated to a marked rigid space $(X^+,Z)$ is a formal limit \[ X=\lim \widetilde X^+ \] over admissible blow-ups $\widetilde X^+$ of $X^+$. By understanding this limit instead in the category of locally linearly-topologised-monoidal topological spaces, we can define a space $\mathrm{RZ}(X)$, the \emph{Riemann-Zariski space} of $X$ (or $(X^+,Z)$). The structure of the terms $\widetilde X^+$ as objects of ${}_Z\mathbf{FSch}$ equips the limit with a sheaf $(\sh O_X;\sh O_X^+)$ of Tate $\F_1$-algebras (though recall that the Tate condition does not persist on the spaces of sections over large open sets of $\mathrm{RZ}(X)$).

The topology on $\mathrm{RZ}(X)$ is that of the limit; it is therefore generated by the quasi-compact open sets of models $X^+$, with two such sets being equal if and only if they agree on a coinitial family of models. It is equal to the lattice of open subobjects of $X$ in $\Sh\mathbf{Rig}$. In particular, $\mathrm{RZ}(X)$ is a quasi-compact and quasi-separated topological space.

It follows that an open immersion $U\hookrightarrow X$ of (qcqs) rigid spaces gives rise to an open immersion of their Riemann-Zariski space, and hence that this construction can be globalised to a functor
\[ \mathbf{Rig}_{\F_1}\rightarrow\mathbf{Top} \]
into the category of topological spaces. As usual, this functor can be upgraded to take values in the category of topological spaces equipped with a sheaf of Tate $\F_1$-algebras. We will usually not distinguish between a rigid analytic space and its Riemann-Zariski space.

One can give a point-set-topological definition of rigid spaces in this manner.

\begin{defn}[Rigid spaces as Tate-ringed topological spaces]Let $(X^+,Z)$ be an object of ${}_Z\mathbf{FSch}_{\F_1}$, that is, a formal scheme $X^+\in\mathbf{FSch}_{\F_1}$ equipped with a closed subset $Z$. Define a Tate-ringed topological space $\mathrm{RZ}(X^+,Z)$ by the preceding formula, limiting over all blow-ups of $X^+$ along $Z$.

A \emph{rigid analytic space} is a Tate-ringed topological space $X$ locally modelled by $\mathrm{RZ}(X^+,Z)$ for some $(X^+,Z)\in {}_Z\mathbf{FSch}_{\F_1}^\mathrm{qcqs}$. Note that on sufficiently small open sets $U\subseteq X$, one always has a canonical reprentative $\Spec\sh O(X)$.\end{defn}

It is quite likely that the underlying set of a Riemann-Zariski space can be described in terms of valuations; for a special case, see the appendix \ref{APPEND}.

\section{Overconvergence in rigid analytic geometry}\label{SEP}

In \cite[\S\zref{I_SEP}]{part1}, we defined notions of overconvergence via the existence of solutions to certain \emph{extension problems}. In applying this to rigid geometry, we should keep in mind that the primary desired property for proper morphisms of rigid spaces is that they should admit proper models; this is proposition \ref{SEP_PROPER_MODEL}.

An alternative approach would be to simply define proper morphisms to be those admitting a proper model. This would remove some of the difficulties encountered below. At least for paracompact morphisms, it would even be possible to extend this approach to defining overconvergence; cf. \ref{SEP_SUR_MODEL}.





\subsection{Rigid analytic closure operator}\label{SEP_CLOSURE}
To avoid constantly having to pick models, it will be convenient to be able to pass to the \emph{formally embedded closure} at the level of rigid spaces. This is a simple special case of the construction in \S I.\zref{I_SEP_NHOOD} of the overconvergent germ with respect to $\P$ the class of formal embeddings. This class evidently satisfies (P1-4), and (SC) is trivial since the property of being a formal embedding is local on the target.

Let $U\hookrightarrow V$ be an open immersion in $\mathbf{Rig}$, and suppose for now that $V$ is paracompact and, in the $\F_1$ case, that $U/V$ is modelled by an \emph{affine} open immersion $U^+\hookrightarrow V^+$ of formal schemes. Then
\[ \mathrm{cl}(U^+/V^+)\rigless Z\hookrightarrow V \]
is formally embedded. If $\tilde V^+\rightarrow V^+$ is a $Z$-admissible modification with base change $\tilde U^+$ to $U^+$, then
\[ \mathrm{cl}(\tilde U^+/\tilde V^+)\hookrightarrow\mathrm{cl}(U^+/V^+)\times_{V^+}\tilde V^+ \]
is an affine formal embedding. 

By ranging over models $U^+\subseteq V^+$ of $U/V$, we may therefore define a \emph{rigid analytic closure} as a pro-object
\[\mathrm{cl}(U/V)=\lim_{U^+/V^+}\mathrm{cl}(U^+/V^+)\rigless Z \in\Pro(\mathbf{Rig}_V)\]
in the category of rigid spaces over $V$. 

\begin{remark}Since the transition maps are modelled by affine morphisms of formal schemes, this pro-object can actually be realised as a rigid analytic space over $V$, though it is not formally embedded.\end{remark}

In other words, $\mathrm{cl}(U/V)$ is actually a sheaf of pro-objects on $\sh U^\mathrm{qcqs}_{/V}$, which we confuse with its global sections. We can use this fact to extend the definition to arbitrary $V$. 

For general $V$, we cannot guarantee that $\mathrm{cl}(U/V)$ is contained in any formally embedded subspace. Despite this, the statement $\mathrm{cl}(U/V)\subseteq Z$ is still well-formed for any immersed $Z\hookrightarrow X$; in particular, for open subsets.


\subsection{Extension problems for rigid spaces}\label{SEP_EXT}
We define $f\P$, $f\!i/\P$ in the topos of marked formal schemes to be the classes of morphisms represented by formally projective, resp. formally integral/projective, morphisms. The forgetful functor
\[  \Sh{}_Z\mathbf{FSch}\hookrightarrow \Sh\mathbf{FSch} \]
both preserves and detects overconvergence. It follows that both its adjoints do as well. The proof being more a problem of notation than anything else, I omit it. 

The essential image of the classes $f\P$, $f\!i/\P$, in the rigid topos are also called formally projective (resp. integral/projective) morphisms, cf. def. \ref{RIG_REPS}. Since admissible modifications are formally integral/projective, the existence of one $f\!i/\P$ model for a morphism implies that of a coinitial family (locally on the target).


\begin{lemma}\label{SEP_RIG_TO_FSCH}The localisation
\[\Sh{}_Z\mathbf{FSch}\rightarrow\Sh\mathbf{Rig}. \]
preserves overconvergence.\end{lemma}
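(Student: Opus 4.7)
The plan is to pull the extension test back from $\Sh\mathbf{Rig}$ to $\Sh{}_Z\mathbf{FSch}$ via the coreflective description of qcqs rigid spaces as admissible pro-formal schemes (\S\ref{RIG_RAYNAUD}), solve the extension on the formal side, and push the solution forward through $\zeta^*$.

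Let $f:X\to Y$ be overconvergent in $\Sh{}_Z\mathbf{FSch}$ with respect to $f\P$ or $f\!i/\P$, and consider a test diagram
\[ \xymatrix{ U \ar[r] \ar@{^{(}->}[d]_{g} & \zeta^* X \ar[d]^{\zeta^* f} \\ V \ar[r] & \zeta^* Y } \]
in $\Sh\mathbf{Rig}$ with $g$ in the image class. Locality of overconvergence on the target, condition (SC), which is inherited from the formal case invoked in the paragraph preceding the lemma, reduces us to the qcqs situation. The hypothesis that $g$ lies in the image class supplies a formal model $g^+:U^+\hookrightarrow V^+$ in ${}_Z\mathbf{FSch}^\mathrm{qcqs}$ of the correct type (formally projective, resp. formally integral/projective). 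Since qcqs rigid spaces are colocal admissible pro-objects, the morphisms $U\to\zeta^*X$ and $V\to\zeta^*Y$ represent, after suitable admissible blow-ups of $U^+$ and $V^+$, formal-side morphisms $U^+\to X$ and $V^+\to Y$, whose composites into $\zeta^*Y$ agree on the nose.

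Dominating the necessary modifications by a common admissible blow-up of $V^+$, and using that $g^+$ and admissible modifications together descend to a coherent lifted square, we reduce to an extension problem in $\Sh{}_Z\mathbf{FSch}$ against the overconvergent $f$. This produces a unique $V^+\to X$, whose image under $\zeta^*$ extends the original diagram. Uniqueness in the rigid topos passes through the same argument: two candidate rigid extensions lift, after a further common model refinement, to two formal extensions that must coincide by uniqueness on the formal side, hence agree in $\Sh\mathbf{Rig}$.

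The main delicate point is bookkeeping: after the admissible modifications needed to align the lifts of the three rigid morphisms in the test square, one must still have a representative of $g$ lying in $f\P$ or $f\!i/\P$. This is guaranteed by the stability of (the saturation of) the class $W$ of admissible modifications under base change and composition (lemma I.\zref{I_MORP_ADMISSIBLE}), together with the observation that admissible modifications themselves belong to $f\!i/\P$, so precomposing with them stays inside the relevant class.
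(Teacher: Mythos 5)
Your overall strategy --- pull the test diagram back to formal models, solve the extension problem there, push forward through $\zeta^*$ --- is the right instinct and is in the same spirit as the paper, but the proposal as written has two concrete problems.

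First, you never engage with the overconvergent germ $\mathrm{Sur}_{U/V}$, which is exactly the object that makes this lemma nontrivial. Overconvergence, unlike propriety, only demands extensions from $\mathrm{Sur}_{U/V}$, not from all of $V$. When you write ``this produces a unique $V^+\to X$, whose image under $\zeta^*$ extends the original diagram,'' you are claiming to solve the \emph{propriety} extension problem, which mere overconvergence of $f$ on the formal side does not grant. At best you obtain a map $\mathrm{Sur}_{U^+/V^+}\to X^+$ over $S^+$, and you must still explain why these, as $V^+$ ranges over all models, assemble to a map out of the rigid-side germ $\mathrm{Sur}_{U/V}$. This is precisely what the paper's proof handles: it identifies $\mathrm{Sur}_{U/V}$ as the cofiltered limit, over models $U^+/V^+$ of $U/V$, of the objects $\mathrm{Sur}_{U^+/V^+}\rigless Z$, and deduces the isomorphism
\[ \colim_{V^+}\Hom_{S^+}(\mathrm{Sur}_{U^+/V^+},X^+)\;\tilde\longrightarrow\;\Hom_{S}(\mathrm{Sur}_{U/V},X^+\rigless Z), \]
which is exactly criterion \emph{iv)} of the comparison lemma from Part I. Your argument has no substitute for this step, and without it the passage to the limit over models is not justified.

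Second, the roles in the test square are mislabeled. You write that ``the hypothesis that $g$ lies in the image class supplies a formal model $g^+:U^+\hookrightarrow V^+\dots$ of the correct type (formally projective, resp.\ formally integral/projective),'' but $g:U\hookrightarrow V$ in your diagram is an open immersion, not a formally projective morphism. The class $f\P$ (or $f\!i/\P$) enters the extension-problem setup through the structural morphism $V\to S$, not through $U\hookrightarrow V$. This confusion is a signal that the bookkeeping you defer to the end --- verifying that after admissible modifications one still has a representative in $f\P$ or $f\!i/\P$ --- has not actually been carried out, and in the form you describe it cannot be: you are checking the wrong morphism.
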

\begin{proof}Let $V$ be any qcqs rigid space, $U\hookrightarrow V$ an open immersion. Then
\[ \mathrm{Sur}_{U/V}\rightarrow\mathrm{Sur}_{U^+/V^+}\rigless Z \]
is, as a pro-object of $\Sh\mathbf{Rig}$, a cofiltered limit over models $U^+/V^+$ of $U/V$. It follows that for a marked formal scheme $(S^+,Z)$ and structural map $V\rightarrow S:=S^+\rigless Z$,
\[ \colim_{V^+}\Hom_{S^+}(\mathrm{Sur}_{U^+/V^+},X^+)\tilde\rightarrow\Hom_{S}(\mathrm{Sur}_{U/V},X^+\rigless Z) \]
for any marked formal scheme $X^+/S^+$. We have shown criterion \emph{iv)} of lemma I.\zref{I_SEP_COMPARE}.\end{proof}

For the rigid analytic puncturing to \emph{detect} overconvergence, we will certainly need at least to restrict to the category ${}_Z\mathbf{FSch}^\mathrm{inv}$ of formal schemes marked along Cartier divisors. Under this restriction, we may try to apply criterion \emph{iii)} of lemma I.\zref{I_SEP_COMPARE} by showing that the square
\[\xymatrix{ \Hom_{S^+}(\mathrm{Sur}_{U^+/V^+},-)\ar[r]\ar[d] & \Hom_S(\mathrm{Sur}_{U/V},-) \ar[d] \\ \Hom_{S^+}(U^+,-)\ar[r] & \Hom_S(U,-) }\]
is Cartesian for any $U^+/V^+/S^+$.

The problem is that the existence of a lift $\mathrm{Sur}_{U/V}\rightarrow X$ of a morphism $U\rightarrow X$ only, \emph{a priori}, guarantees that we can find a morphism into $X^+$ from a modification $\tilde V^+\rightarrow V^+$ restricting to an \emph{admissible} modification $\tilde U^+$ of $U^+$:\[\xymatrix{ \widetilde U^+\ar[r]\ar[d]_{Z-\text{adm.}} & \widetilde V^+\ar@{-->}[ddl]\ar[d]^{f\P}  \\ U^+\ar[r]\ar[d] & V^+\ar[d] \\ X^+\ar[r]^f & S^+ }\] Typically $\widetilde V^+$ will not admit a section over $U^+$. 

We would like, nonetheless, for these data to give rise to a unique morphism $\mathrm{Sur}_{U^+/V^+}\rightarrow X^+$ over $S^+$. In other words, we want that $\mathrm{Sur}_{\widetilde U^+/\widetilde V^+}$ and $U^+$ form a \emph{canonical covering} of $\mathrm{Sur}_{U^+/V^+}$ in the category of algebraic spaces.

\begin{lemma}\label{SEP_PUSHOUT}Let $\widetilde V\rightarrow V$ be surjective. The square
\[\xymatrix{ \widetilde U \ar[r]\ar[d] & \mathrm{Sur}_{\widetilde U/\widetilde V}\ar[d] \\ 
U\ar[r] & \mathrm{Sur}_{U/V}}\]
is a pushout in the category of formal algebraic spaces separated and locally of finite type over $S$.\end{lemma}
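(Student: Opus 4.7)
The plan is to verify the pushout property on $\Hom$: for any formal algebraic space $X$ separated and locally of finite type over $S$, show that
\[ \Hom_S(\mathrm{Sur}_{U/V}, X) \longrightarrow \Hom_S(U, X) \times_{\Hom_S(\widetilde U, X)} \Hom_S(\mathrm{Sur}_{\widetilde U/\widetilde V}, X) \]
is a bijection. Two prerequisites I would want in hand first: (a) $\mathrm{Sur}$ commutes with base change, yielding $\mathrm{Sur}_{\widetilde U/\widetilde V}\cong\mathrm{Sur}_{U/V}\times_V\widetilde V$ and hence a surjection $p:\mathrm{Sur}_{\widetilde U/\widetilde V}\twoheadrightarrow\mathrm{Sur}_{U/V}$; and (b) the open immersion $U\hookrightarrow\mathrm{Sur}_{U/V}$ is scheme-theoretically dense in a sense strong enough to determine morphisms to a separated target. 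Both should follow from the extension-problem definition of $\mathrm{Sur}$ in \S I.\zref{I_SEP} together with the affine-local closure construction of \S\ref{SEP_CLOSURE}: locality-on-the-target of the extension problem gives (a), while the presentation $\mathrm{cl}(U^+/V^+)$ as the formally embedded closure makes $U$ visibly dense in each finite model, hence in the pro-limit.

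For \emph{injectivity}, suppose $g_1,g_2:\mathrm{Sur}_{U/V}\to X$ agree on $U$. Since $X/S$ is separated, their equaliser is a closed formal subspace of $\mathrm{Sur}_{U/V}$; by (b) it contains a dense open subspace, so it equals $\mathrm{Sur}_{U/V}$ and $g_1=g_2$. In particular, compatibility of two lifts on $U$ alone is already enough for uniqueness; the $\mathrm{Sur}_{\widetilde U/\widetilde V}$ factor enters only in the existence half.

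For \emph{surjectivity}, given compatible $f:U\to X$ and $\widetilde f:\mathrm{Sur}_{\widetilde U/\widetilde V}\to X$, I would descend $\widetilde f$ along $p$. The descent datum to check is that the two composites
\[ \mathrm{Sur}_{\widetilde U/\widetilde V}\times_{\mathrm{Sur}_{U/V}}\mathrm{Sur}_{\widetilde U/\widetilde V}\;\rightrightarrows\;\mathrm{Sur}_{\widetilde U/\widetilde V}\;\xrightarrow{\widetilde f}\;X \]
coincide. By the assumed compatibility with $f$ they agree on the open subspace $\widetilde U\times_U\widetilde U$; by functoriality of (b) this open is dense in the double fibre product, so separation of $X$ (exactly as in the injectivity step) forces the two composites to coincide on all of the fibre product.

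The main obstacle, to my mind, is not the descent datum itself but actually effecting the descent: producing a morphism $g:\mathrm{Sur}_{U/V}\to X$ from $\widetilde f$ in the category of separated, locally-finite-type formal algebraic spaces over $S$. Via its graph, a morphism to such an $X$ is the same as a closed subspace of the product projecting isomorphically onto the source, so the question reduces to descending a closed subspace of $\mathrm{Sur}_{\widetilde U/\widetilde V}\times_S X$ along $p$ — and it is here that the separated and locally-of-finite-type hypotheses of the lemma are doing essential work. Once this graph-descent step is justified, combining it with the two prerequisites (a) and (b) yields the pushout property.
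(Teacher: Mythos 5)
There is a genuine gap, and you have already put your finger on it: the ``graph-descent step'' is precisely what your argument needs and does not supply. Effectiveness of descent for closed subspaces along a proper surjective morphism $p:\mathrm{Sur}_{\widetilde U/\widetilde V}\to\mathrm{Sur}_{U/V}$ is not automatic --- $p$ is not flat, so one cannot invoke fpqc descent, and in the category of formal algebraic spaces over $\F_1$ or $\Z$ no general theorem of this shape is available. Your injectivity argument via separation and density is fine, and the compatibility of the two pullbacks to $\mathrm{Sur}_{\widetilde U/\widetilde V}\times_{\mathrm{Sur}_{U/V}}\mathrm{Sur}_{\widetilde U/\widetilde V}$ is the right descent datum, but having a descent datum is not the same as descending.

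The paper avoids the descent problem entirely by inverting the order of operations. Rather than descending $\Gamma_{\widetilde f}$, it forms the scheme-theoretic closure $\tilde V$ of (the graph of) $U\to X$ directly inside $V\times X$. This $\tilde V$ is automatically separated and of finite type over $V$, and the whole content of the lemma is then to show $\tilde V\to V$ is \emph{proper}. That is done not by descent but by EGA's elementary criteria: $\widetilde V\to X\times V$ factors through $\tilde V$ because $\widetilde U$ is dense in $\widetilde V$; the induced $\widetilde V\to\tilde V$ is proper by \cite[\textbf{II}.5.4.3.\emph{i)}]{EGA} and surjective because it is closed and hits a dense open; and then $\tilde V\to V$ is proper by \cite[\textbf{II}.5.4.3.\emph{ii)}]{EGA}. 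With $\tilde V$ proper over $V$ and containing $U$, the universal property of $\mathrm{Sur}_{U/V}$ gives the desired factorisation $\mathrm{Sur}_{U/V}\to\tilde V\to X$. This sidesteps any general-position descent theorem in favour of a concrete properness check.

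One further point your plan does not address: the lemma also needs to hold over $\F_1$, where the EGA criteria (and, a fortiori, any would-be descent theorem for closed subspaces) are unavailable. The paper's proof splits into a separate $\F_1$ argument, reducing via Noetherian approximation and decomposition into quasi-integral pieces to a \emph{toric} case where the extension is produced combinatorially from the map of punctured fans. Any correct proof of this lemma has to confront that dichotomy; a purely categorical descent strategy would not.
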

\begin{proof}[Proof over $\Z$]Suppose we are given a square
 \[\xymatrix{ \widetilde U \ar[r]\ar[d] & \widetilde V\ar[d] \\ U\ar[r] & X}\]
and, without loss of generality, that $U$ is dense in $V$. We may assume that all players are schemes; the general result follows from passing to an inductive colimit. In particular, $\widetilde V\rightarrow V$ is projective. We suppress $S$ from the notation. 

We may obtain a morphism into $X$ from the closure $\tilde V$ of $U$ in $X\times V$. Since $X$ is separated and locally of finite type, $\tilde V$ is separated and of finite type over $V$. It will be enough to show that $\tilde V\rightarrow V$ is proper, and hence that $\mathrm{Sur}_{U/V}\rightarrow \tilde V$. 

Because $\widetilde U$ is dense in $\widetilde V$, the section $\widetilde V\rightarrow X\times V$ factors through $\tilde V$. By \cite[ \textbf{II}.5.4.3.\emph{i)}]{EGA}, $\widetilde V\rightarrow\tilde V$ is proper. In particular, it is closed; being surjective over a dense open subset, it is therefore surjective. Finally, by \cite[ \textbf{II}.5.4.3.\emph{ii)}]{EGA}, $\tilde V\rightarrow V$ is proper.\end{proof}

Essentially the same argument would carry through in the $\F_1$ case if we had the analogue of \cite[ \textbf{II}.5.4.3.\emph{ii)}]{EGA} (cf. lemma I.\zref{I_phlegm}) for proper morphisms over $\F_1$ and if we knew that torsion-free modifications were `strongly surjective' in the sense of \emph{loc. cit.}.

Unfortunately, we don't have either of these things, and so we are reduced to pursuing a more ad hoc approach using the combinatorial methods of \S I.\zref{I_PFAN_SUR}:

\begin{proof}[Proof over $\F_1$]Let us fix a square 
\[\xymatrix{ \widetilde U \ar[r]\ar[d] & \widetilde V\ar[d] \\ U\ar[r] & X}\]
and assume that all players are schemes and that $U/V$ is affine and dense.

\vspace{5pt}
\emph{Toric case.} Suppose that $X/S$ lives in $\mathbf{Sch}_{\F_1}^\mathrm{qi/nb}$ and that $S$ is Noetherian. We may as well assume that $X$ is connected; it is then represented by a fan $\Sigma_X\subset N_X$. The morphism $\sigma_U\rightarrow\Sigma_X$ induces a map $\sigma_V(\R)\rightarrow N_X(\R)$ whose image, because $\sigma_{\widetilde V}\rightarrow\sigma_V$ is bijective, is in the support of $\Sigma_X$. We therefore obtain a morphism $\tilde \sigma_V\rightarrow\Sigma_X$ from a subdivision $\tilde\sigma_V$ of $\sigma_V$ not touching $\sigma_U$.

\vspace{5pt}
\emph{Noetherian case.} Suppose that $S$ is Noetherian. Then $V$ and $\widetilde V$ are Noetherian and hence can be decomposed into finitely many quasi-integral closed subschemes $V_i,\widetilde V_i$ which, for simplicity, we index by the same set.

By replacing $X$ with the closure $X_i$ of the image of $U_i$, we reduce to the toric case. We therefore have unique extensions $\mathrm{Sur}_{U_i/V_i}\rightarrow X$. By proposition I.\zref{I_INT_DECOMP}, these extensions glue to a unique extension $\mathrm{Sur}_{U/V}\rightarrow X$.

\vspace{5pt}
\emph{General case.} Let us replace $X$ with a quasi-compact open subset through which $\widetilde V\rightarrow X$ factors. All objects are now of finite type over $S$, and so there exists a Noetherian formal scheme $S_0$, a diagram \[\xymatrix{ \widetilde U_0\ar[r]\ar[d]_{Z-\text{adm.}} & \widetilde V_0\ar@{-->}[ddl]\ar[d]^\P  \\ U_0\ar[r]\ar[d] & V_0\ar[d] \\ X_0\ar[r]^f & S_0 }\] over $S_0$ and an embedding $X\hookrightarrow X_0\times_{S_0}S$. By the Noetherian case, there is a unique map $\mathrm{Sur}_{U_0/V_0}\rightarrow X_0$ descending from $\mathrm{Sur}_{\widetilde U_0/\widetilde V_0}$, and hence an extension
\[ \mathrm{Sur}_{U/V}\rightarrow\mathrm{Sur}_{U_0/V_0}\times_{S_0}S\rightarrow X_0\times_{S_0}S. \]
Since embeddings are proper (proposition I.\zref{I_SEP_EMBEDDING}), the extension actually factors through the embedded formal subscheme $X$.\end{proof}

\begin{remark}[Descent along blow-downs]In certain cases, it may even be possible to construct a pushout of $\widetilde U^+\rightarrow\widetilde V^+$ along the blow-down $\widetilde U^+\rightarrow U^+$; for instance, this is what is happening in proposition I.\zref{I_INT_DECOMP}.

Here is a sketch of an algorithmic construction in the \emph{toric} case, that is, when the square lives in $\mathbf{FSch}_{\F_1}^\mathrm{n/nb}$. We may assume that all players are Noetherian. Then $\Sigma_{\tilde V^+}\rightarrow\Sigma_{V^+}$ is a subdivision of a neighbourhood of $\Sigma_{U^+}$. Let us consider these as fans immersed in $N_{U^+}$. The objective is to `coarsen' $\Sigma_{\tilde V^+}$ in a minimal way such that it no longer subdivides $\Sigma_{U^+}$.

Let us begin by deleting the cones of $\Sigma_{\tilde V^+}$ that intersect $\Sigma_{U^+}$ in some set that is not a face. The resulting collection of cones may fail to be strongly convex, so we continue by deleting cones where this failure occurs. Since $\Sigma_{\tilde V^+}$ has finitely many cones, this procedure terminates after a finite number of steps, and its termination implies that we are left with a punctured fan. This object is the blow-down.\end{remark}

\subsection{Separation}\label{SEP_SEP}

\begin{defn}A morphism of rigid spaces is said to be \emph{locally separated} if it is locally $f\P$-separated (\cite[def. \zref{I_SEP_EXTENS_DEF}]{part1}). It is \emph{separated} if it is quasi-separated and locally separated.\end{defn}

By lemma \ref{SEP_RIG_TO_FSCH}, if $X^+\rightarrow S^+$ is a (locally) separated morphism of formal schemes, then $X^+\rigless Z\rightarrow S^+\rigless Z$ is (locally) separated for any marking $Z$ of $X^+$ and $S^+$. Conversely:

\begin{lemma}\label{SEP_FSCH_TO_RIG}Let $X^+\rightarrow S^+$ be a morphism of formal schemes with Cartier marking.

 If $X^+\rigless Z\rightarrow S^+\rigless Z$ is locally separated, then $X^+/S^+$ is locally separated.\end{lemma}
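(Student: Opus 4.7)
The plan is to verify criterion \emph{iii)} of lemma I.\zref{I_SEP_COMPARE} applied to the class $f\P$, that is, to show that the square
\[\xymatrix{ \Hom_{S^+}(\mathrm{Sur}_{U^+/V^+},X^+)\ar[r]\ar[d] & \Hom_S(\mathrm{Sur}_{U/V},X) \ar[d] \\ \Hom_{S^+}(U^+,X^+)\ar[r] & \Hom_S(U,X) }\]
is Cartesian for every test datum $U^+\hookrightarrow V^+\to S^+$ with $U^+$ a dense open immersion and $V^+/S^+$ formally projective. Once the square is Cartesian, the hypothesised local separation of $X/S$ — bijectivity of the right-hand vertical — pulls back to bijectivity of the left-hand vertical, which is exactly the $f\P$-extension property for $X^+/S^+$ demanded by local separation.

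To exhibit the Cartesian property I would start from a compatible pair of a formal morphism $f:U^+\to X^+$ and a rigid extension $\bar f^{\mathrm{an}}:\mathrm{Sur}_{U/V}\to X$ restricting to $f^{\mathrm{an}}$, and construct a unique formal lift $\bar f:\mathrm{Sur}_{U^+/V^+}\to X^+$. The morphism $\bar f^{\mathrm{an}}$, viewed as a map of pro-objects, is represented at the model level by some $\tilde f:\mathrm{Sur}_{\widetilde U^+/\widetilde V^+}\to X^+$ for an admissible modification $\widetilde V^+\to V^+$ whose restriction $\widetilde U^+\to U^+$ is $Z$-admissible. After enlarging the modification if necessary, the Cartier hypothesis on the marking allows identification of $\tilde f|_{\widetilde U^+}$ with $f$ pulled back along $\widetilde U^+\to U^+$ as genuine formal morphisms, not merely as rigid maps. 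Lemma \ref{SEP_PUSHOUT} then exhibits $\mathrm{Sur}_{U^+/V^+}$ as the pushout of $U^+\leftarrow\widetilde U^+\to\mathrm{Sur}_{\widetilde U^+/\widetilde V^+}$ in formal algebraic spaces separated and locally of finite type over $S^+$, so the compatible pair $(f,\tilde f)$ glues to the required $\bar f$. Uniqueness of $\bar f$ is immediate from the pushout universal property combined with uniqueness in the rigid setting.

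The step I expect to be the main obstacle is the refinement-and-identification in the middle paragraph: two formal morphisms $\widetilde U^+\rightrightarrows X^+$ whose rigid restrictions coincide need not themselves be equal, and it is precisely the Cartier hypothesis on the marking that makes their discrepancy trackable through a further $Z$-admissible modification of $\widetilde U^+$ after which they agree. Without the Cartier condition one cannot in general promote the rigid lift of $\bar f^{\mathrm{an}}$ to one compatible with $f$, and the Cartesian property of the square breaks down — explaining both why the Cartier hypothesis appears in the statement and why the forward implication of lemma \ref{SEP_RIG_TO_FSCH} does not need it.
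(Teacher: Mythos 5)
Your plan — verify criterion \emph{iii)} of lemma I.SEP\_COMPARE by exhibiting the Hom-square as Cartesian — is not the route the paper takes for this lemma, and as written it has a circularity problem.

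The gap is in the appeal to lemma \ref{SEP_PUSHOUT}. That lemma states that the square
$U^+ \leftarrow \widetilde U^+ \rightarrow \mathrm{Sur}_{\widetilde U^+/\widetilde V^+}$ with pushout $\mathrm{Sur}_{U^+/V^+}$
is a pushout \emph{in the category of formal algebraic spaces separated and locally of finite type over $S^+$}; so its universal property, whether for gluing a compatible pair $(f,\tilde f)$ into a single $\bar f$ or for its uniqueness, is only available when the target lies in that category. In your argument the target is $X^+$, and its separation over $S^+$ is precisely what lemma \ref{SEP_FSCH_TO_RIG} is being invoked to establish; you are assuming the conclusion to run the construction. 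This is not an incidental detail: the proof of lemma \ref{SEP_PUSHOUT} over $\Z$ explicitly uses ``Since $X$ is separated and locally of finite type, $\tilde V$ is separated and of finite type over $V$,'' and the paper's own use of lemma \ref{SEP_PUSHOUT} in proposition \ref{SEP_PROPER_MODEL} is deliberately sequenced \emph{after} the model's separation has been supplied by proposition \ref{SEP_SEP_MODEL} (hence by the very lemma you are trying to prove). Your approach also aims at more than is needed: Cartesianness of the square is what one wants for detecting overconvergence/propriety, whereas for separation one only needs that injectivity of the right-hand vertical implies injectivity of the left-hand vertical. (Relatedly, you write ``bijectivity'' where local separation means injectivity; the slip is cosmetic given how you use it, but it suggests you are importing the overconvergence criterion.)

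The paper's own proof is direct and avoids all of this. Given two solutions $V^+\rightrightarrows X^+$ to a formal extension problem over $U^+$, rigid local separation makes the associated rigid maps agree after a $Z$-admissible modification $\widetilde V^+\rightarrow V^+$; the Cartier hypothesis makes this modification \emph{surjective}, so the two formal maps $V^+\rightrightarrows X^+$ already agree as maps of underlying sets. Taking $V^+$ affine, one may then replace $X^+$ by an affine open through which both factor and finish by observing that $U^+\rightarrow V^+$ (a dense affine open immersion) is an epimorphism of affine schemes. This argument uses no existence statement at all — only uniqueness — and in particular makes no appeal to any pushout universal property, sidestepping the need to know anything about $X^+$ in advance. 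If you want to keep the criterion-\emph{iii)} framing, you would need to prove injectivity of the left vertical directly, which comes down to exactly the surjectivity-plus-affine-epi argument the paper gives.
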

\begin{proof}Let \[\xymatrix{ U^+\ar[r]\ar[d] & V^+\ar@<2pt>[dl]\ar@<-2pt>[dl] \\ X^+}\] be an extension problem with two solutions. Since $X$ is locally separated, the two morphisms become equal after a $Z$-admissible modification $\tilde V^+$ of $V^+$.

As $Z$ is invertible, $\tilde V^+\rightarrow V^+$ is surjective, and so $V^+\rightrightarrows X^+$ both have the same underlying map of sets. Assuming $V^+$ affine, we may therefore replace $X$ with an affine open subset through which both solutions factor. But $U^+\rightarrow V^+$ is an epimorphism in the category of affine schemes, so the two arrows are equal.\end{proof}

\begin{prop}\label{SEP_SEP_MODEL}Let $X\rightarrow S$ be a quasi-separated morphism of analytic spaces. The following are equivalent:
\begin{enumerate}\item $X/S$ is separated;
\item every quasi-compact open subset of $X$ is separated over $S$;
\item locally on $S$, every quasi-compact open subset of $X$ admits a separated model;
\item every model with Cartier marking of every open subset of $X$ is separated;
\item the diagonal of $X/S$ is an embedding.\end{enumerate}\end{prop}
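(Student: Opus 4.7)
The plan is to establish the main cycle of implications (i) $\Rightarrow$ (ii) $\Rightarrow$ (iii) $\Rightarrow$ (i), and then to deduce (iv) from (i) by applying lemma \ref{SEP_FSCH_TO_RIG} directly, since (iv) $\Rightarrow$ (iii) is immediate from the existence of Cartier models for quasi-compact opens. Finally, (v) $\Leftrightarrow$ (i) will be treated separately by unfolding the definition of (representable) embedding from def. \ref{RIG_REPS} and matching it against the formal-scheme characterisation of separation via the diagonal.

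For (i) $\Rightarrow$ (ii), I would observe that if $U \hookrightarrow X$ is open, then $U \to S$ has diagonal obtained from $\Delta_{X/S}$ by base change along $U \times_S U \to X \times_S X$; quasi-separation and local $f\P$-separation are both stable under base change, so the restriction stays separated. For (ii) $\Rightarrow$ (iii), I would invoke the earlier proposition that paracompact (in particular qcqs) rigid spaces have enough models, and combine it with the definition of local $f\P$-separation: after choosing any model $U^+/S^+$, the local $f\P$-extension property for the diagonal furnishes an admissible blow-up of $U^+ \times_{S^+} U^+$ over which the strict transform of the diagonal becomes a formal embedding; pulling back to a model of $U^+$ produces a separated formal model, after shrinking $S$.

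For (iii) $\Rightarrow$ (i), separation is a local condition on source and target by construction, so it suffices to test it on a covering of $X$ by qcqs opens over a covering of $S$. Given a separated formal model on each such piece, lemma \ref{SEP_RIG_TO_FSCH} (applied to the diagonal) transports local $f\P$-separation from formal schemes to rigid spaces, yielding local separation of $X/S$; quasi-separation was assumed. For (i) $\Rightarrow$ (iv), I would take any Cartier-marked model $U^+/S^+$ of an open $U \subseteq X$, note that $U/S$ is locally separated by (i), and apply lemma \ref{SEP_FSCH_TO_RIG}, which is precisely tailored to the Cartier hypothesis, to conclude that $U^+/S^+$ is separated. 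The implication (iv) $\Rightarrow$ (iii) holds by choosing a Cartier-marked model of a given qcqs open, which exists by the enough-models proposition and by admissibly blowing up any given model until the marking becomes a union of Cartier divisors (as already observed in \S\ref{RIG_AFFINE}).

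For (i) $\Leftrightarrow$ (v), I would unwind def. \ref{RIG_REPS}: the diagonal $\Delta_{X/S}$ is an embedding iff it is, locally on $X \times_S X$, representable by a morphism of formal schemes that is a formal embedding. Given quasi-separation, $\Delta_{X/S}$ is already representable by formal schemes, so the condition reduces to the existence of formally embedded models of the diagonal; by lemmas \ref{SEP_RIG_TO_FSCH} and \ref{SEP_FSCH_TO_RIG} this is equivalent to the formal models of $X/S$ being (locally) separated, which by the cycle above is equivalent to (i). The main obstacle I expect is the careful handling of (ii) $\Rightarrow$ (iii): extracting not merely a model but a \emph{separated} model requires that the $f\P$-extension property for the diagonal be translated into an admissible blow-up adjusting a pre-existing model into a separated one, and one must verify that this blow-up can be carried out compatibly with the Cartier marking and with shrinking $S$, so that the resulting separated model of $U$ descends to a genuine model in $\mathbf{Mdl}_U$.
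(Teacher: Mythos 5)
Your treatment of (i)--(iv) follows the paper's strategy in spirit: the workhorses are indeed lemmas \ref{SEP_RIG_TO_FSCH} and \ref{SEP_FSCH_TO_RIG}, with a Cartier-marked model interposed. The paper's route is more economical than your cycle (i)~$\Rightarrow$~(ii)~$\Rightarrow$~(iii)~$\Rightarrow$~(i); it simply does (i)~$\Rightarrow$~(iv) via lemma \ref{SEP_FSCH_TO_RIG}, (iv)~$\Rightarrow$~(iii) trivially, and (iii)~$\Rightarrow$~(i) via lemma \ref{SEP_RIG_TO_FSCH}, leaving (i)~$\Leftrightarrow$~(ii) as elementary locality of separation. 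Your (ii)~$\Rightarrow$~(iii), which tries to manufacture a separated model by hand by blowing up $U^+\times_{S^+}U^+$ using the $f\P$-extension property, is doing work that lemma \ref{SEP_FSCH_TO_RIG} already packages: that lemma \emph{is} the statement that separation of the punctured morphism forces separation of a Cartier-marked model, and you should simply invoke it on a chosen model rather than re-derive it.

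There is a genuine gap in your treatment of (v). The step ``the condition reduces to the existence of formally embedded models of the diagonal; by lemmas \ref{SEP_RIG_TO_FSCH} and \ref{SEP_FSCH_TO_RIG} this is equivalent to the formal models of $X/S$ being (locally) separated'' does not follow from those lemmas. Lemma \ref{SEP_RIG_TO_FSCH} says that the puncturing functor preserves overconvergence (hence $f\P$-separation), and lemma \ref{SEP_FSCH_TO_RIG} says that separation of the punctured morphism implies separation of a Cartier-marked model. Neither of them provides the equivalence between ``the diagonal of the model is a formal embedding'' and ``the model is locally separated.'' That equivalence is the content of corollary I.\zref{I_SEP_CRITERIA} from part I, a formal-scheme-level diagonal criterion for separation, and this is exactly what the paper invokes --- applied to qcqs local models of the diagonal (which exist because $X/S$ is quasi-separated). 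You gesture at ``the formal-scheme characterisation of separation via the diagonal'' in your setup, which is the right ingredient, but when you elaborate you substitute the wrong lemmas for it. Without corollary I.\zref{I_SEP_CRITERIA} (or a re-proof of it), the chain of equivalences for (v) does not close.

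Also note a smaller point of care: in (v) you must use the quasi-separation hypothesis to know that $\Delta_{X/S}$ is qcqs, so that it \emph{has} models to which the formal-scheme criterion can be applied; you mention quasi-separation but it is worth making the dependency explicit, since this is precisely where that hypothesis enters the proof.
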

\begin{proof}The equivalence \emph{i)}$\Leftrightarrow$\emph{ii)} is elementary. We have just seen, through lemmas \ref{SEP_RIG_TO_FSCH} and \ref{SEP_FSCH_TO_RIG}, the equivalence with \emph{iii)} and \emph{iv)}. Finally, \emph{v)} is a consequence of corollary I.\zref{I_SEP_CRITERIA} applied to local models of the diagonal, which is qcqs by hypothesis.\end{proof}

\subsection{Overconvergence and propriety}\label{SEP_PROPER}

\begin{defn}\label{SEP_DEF_PROPER}A morphism $X\rightarrow S$ is said to be \emph{overconvergent}, resp. \emph{proper}, if it is $f\P$-overconvergent, resp. $f\P$-proper (def. I.\zref{I_SEP_EXTENS_DEF}) and locally of finite type.\end{defn}

By lemma \ref{SEP_RIG_TO_FSCH}, if $X\rightarrow S$ is an overconvergent (resp. proper) morphism of formal schemes, then $X\rigless Z\rightarrow S\rigless Z$ is overconvergent (resp. proper) for any marking of $X$ and $S$. 

The converse is a little more difficult:


\begin{prop}\label{SEP_PROPER_MODEL}Let $f:X\rightarrow S$ be a morphism of analytic spaces. The following are equivalent:
\begin{enumerate}\item $f$ is proper;
\item locally on $S$, $f$ admits a proper model;
\item a model with Cartier marking of a base change of $f$ is proper.
\end{enumerate}\end{prop}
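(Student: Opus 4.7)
The plan is to mirror the structure of proposition \ref{SEP_SEP_MODEL}. The implications (iii) $\Rightarrow$ (ii) and (ii) $\Rightarrow$ (i) are quick: for the former, take the base change along the identity of $S$; for the latter, apply lemma \ref{SEP_RIG_TO_FSCH} to the class $f\P$ of formally projective morphisms, since puncturing a proper model produces an $f\P$-proper rigid morphism and propriety can be checked locally on the target, while local finite type is clearly preserved.

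The substantive direction is (i) $\Rightarrow$ (iii), which is the rigid-to-formal analogue of lemma \ref{SEP_FSCH_TO_RIG} adapted to propriety in place of separation. I would begin by invoking clause (iv) of proposition \ref{SEP_SEP_MODEL} on the Cartier model $f^+: X^+\to S^+$ of a base change: since propriety of $f$ implies its separatedness, $f^+$ is already separated, placing $X^+/S^+$ in the setting where lemma \ref{SEP_PUSHOUT} will apply. It then suffices to solve every $f\P$ extension problem
\[\xymatrix{U^+ \ar[d] \ar[r] & V^+ \ar[d]^{f\P} \ar@{-->}[dl] \\ X^+ \ar[r] & S^+}\]
uniquely. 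Puncturing along $Z$ and applying propriety of $f$ produces a unique rigid lift $\mathrm{Sur}_{U/V}\to X$ over $S$, which by clearing denominators is modelled by some $\mathrm{Sur}_{\widetilde U^+/\widetilde V^+}\to X^+$ arising from a $Z$-admissible modification $\widetilde V^+\to V^+$ with pullback $\widetilde U^+\to U^+$.

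The final move is to descend this morphism from $\mathrm{Sur}_{\widetilde U^+/\widetilde V^+}$ down to $\mathrm{Sur}_{U^+/V^+}$. Because the marking $Z$ is Cartier, $\widetilde V^+\to V^+$ is surjective, so lemma \ref{SEP_PUSHOUT} identifies $\mathrm{Sur}_{U^+/V^+}$ with the pushout of $\widetilde U^+\to\mathrm{Sur}_{\widetilde U^+/\widetilde V^+}$ along $\widetilde U^+\to U^+$ in the category of separated formal algebraic spaces locally of finite type over $S^+$. Composition with the canonical map $V^+\to\mathrm{Sur}_{U^+/V^+}$ then furnishes the required extension $V^+\to X^+$. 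I expect the main obstacle to be the bookkeeping needed to verify the compatibility of the rigid-level lift with $U^+\to X^+$ on $\widetilde U^+$, so that the pushout of lemma \ref{SEP_PUSHOUT} can actually be invoked; uniqueness of the resulting extension will then follow automatically from the separatedness of $X^+/S^+$ already in hand.
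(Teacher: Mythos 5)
Your decomposition into $(iii)\Rightarrow(ii)\Rightarrow(i)$ plus the substantive $(i)\Rightarrow(iii)$, and your use of proposition \ref{SEP_SEP_MODEL} for separatedness of the model followed by lemma \ref{SEP_PUSHOUT} for existence of solutions, is exactly the paper's argument, just unpacked in detail.

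One thing to correct in the final paragraph: the object $\mathrm{Sur}_{U^+/V^+}$, being corepresented by the formally embedded closure $\mathrm{cl}(U^+/V^+)$ (cf.\ \S\ref{SEP_DELIGNE}), is a pro-subobject of $V^+$ containing $U^+$; the structural map goes $\mathrm{Sur}_{U^+/V^+}\to V^+$, not the other way, so there is no ``canonical map $V^+\to\mathrm{Sur}_{U^+/V^+}$'' to compose with. Fortunately you don't need one: the $f\P$-extension problem for formal schemes asks precisely for a morphism $\mathrm{Sur}_{U^+/V^+}\to X^+$ over $S^+$ (this is visible in the colimit formula in the proof of lemma \ref{SEP_RIG_TO_FSCH}), and the pushout of lemma \ref{SEP_PUSHOUT} hands you exactly that. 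So the dashed arrow in your diagram should target $\mathrm{Sur}_{U^+/V^+}$ rather than $V^+$, and the last sentence about producing $V^+\to X^+$ should simply be dropped. With that adjustment, the argument is the paper's proof.
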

\begin{proof}A proper morphism is qcqs and so certainly admits a model locally on the base. The trick is to show that this model is proper whenever the marking is Cartier. By proposition \ref{SEP_SEP_MODEL}, it is at least separated. But then the existence of solutions to extension problems follows from lemma \ref{SEP_PUSHOUT}.\end{proof}

By choosing models, it follows immediately from proposition I.\zref{I_SEP_SUR_MODELS}:

\begin{cor}\label{SEP_SUR_MODEL}
If $f$ is paracompact, then the following are equivalent:
\begin{enumerate}\item $f$ is overconvergent;
\item every proper $X$-space qcqs over $S$ is proper over $S$;
\item every formally embedded subspace of $X$ qcqs over $S$ is proper over $S$.
\end{enumerate}\end{cor}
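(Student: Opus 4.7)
The plan is to translate the corollary's three conditions, one by one, into the formal-scheme setting via a suitable choice of Cartier-marked model, and then invoke the formal-scheme analogue, proposition I.\zref{I_SEP_SUR_MODELS}. Since $f$ is paracompact (hence quasi-separated), working locally on $S$ we may pick a paracompact Cartier-marked formal model $f^+ : X^+ \to S^+$ of $f$; by the discussion of \S\ref{RIG_MODELS} together with proposition \ref{SEP_PROPER_MODEL}, such a model will be able to witness each of the three properties simultaneously, after possibly passing to a further admissible modification.

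I would then verify the following dictionary. Condition \emph{i)} passes between the rigid side and the formal side by lemma \ref{SEP_RIG_TO_FSCH} and its detection-direction converse: if $\widetilde V^+ \to V^+$ is an admissible modification dominating a given solution at the rigid level, the pushout lemma \ref{SEP_PUSHOUT} descends a lift $\mathrm{Sur}_{U/V} \to X$ to a lift $\mathrm{Sur}_{U^+/V^+} \to X^+$. Condition \emph{ii)} transports via proposition \ref{SEP_PROPER_MODEL}: a proper $X$-space qcqs over $S$ always admits a proper model over $X^+$, and properness over $S^+$ of that model is detected by, and detects, properness of its puncturing. Condition \emph{iii)} transports via the rigid analytic closure of \S\ref{SEP_CLOSURE}: a formally embedded qcqs subspace of $X$ is, locally and up to admissible modification, the puncturing of a formally embedded closed formal subscheme of $X^+$, and this correspondence is natural in the system of models.

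Once these translations are in place, conditions \emph{i)}, \emph{ii)}, \emph{iii)} of the corollary become synonymous with the corresponding three conditions of proposition I.\zref{I_SEP_SUR_MODELS} applied to the model $f^+$, and the corollary follows. Note in passing the implication \emph{ii)}$\Rightarrow$\emph{iii)} is in any case immediate since embeddings (and hence formal embeddings after puncturing) are proper by proposition I.\zref{I_SEP_EMBEDDING}.

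The main obstacle, I anticipate, is ensuring that the translations of \emph{ii)} and \emph{iii)} are essentially surjective: that \emph{every} proper qcqs $X$-space (resp.\ formally embedded qcqs subspace) arises as the puncturing of one over $X^+$, for the \emph{single} chosen model, rather than requiring ever-finer models for different test objects. This is where paracompactness of $f$ is essential — it guarantees the cofilteredness of $\mathbf{Mdl}_X$ and $\mathbf{Mdl}_S$, allowing us to absorb any admissible refinement arising from a particular test gadget into a global model of $f$ suitable for applying proposition I.\zref{I_SEP_SUR_MODELS} uniformly.
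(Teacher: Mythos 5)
Your proposal matches the paper's approach exactly: the paper's entire argument is ``By choosing models, it follows immediately from proposition I.\zref{I_SEP_SUR_MODELS},'' and your translation of each condition (via lemma \ref{SEP_RIG_TO_FSCH}, proposition \ref{SEP_PROPER_MODEL}, and the rigid analytic closure) plus the observation that paracompactness supplies a cofiltered system of models is precisely the bookkeeping that line elides. Nothing to correct.
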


\subsection{Overconvergence apr\`es Deligne}\label{SEP_DELIGNE}

In \cite{Deligne}, Deligne defines a sheaf $F$ on the small topos $\Sh(X)$ of a quasi-separated rigid analytic space $X$ over $\Z$ to be overconvergent if and only if, for all qcqs open $U\subseteq X$,
\[ \colim_{U\subseteq\mathrm{cl}(U/X)\subseteq V}F(V)\rightarrow F(U) \]
is an isomorphism.\footnote{Actually, Deligne works only with rigid analytic spaces admitting a Noetherian formal model, but the definition works without modification in an arbitrary quasi-separated geometry.} The term on the left does not depend on whether we interpret $\mathrm{cl}(U/X)$ in terms of point set topology or as a pro-object as in \S\ref{SEP_CLOSURE}. As such, the definition is equally well-formed over $\F_1$.

By definition of the pullback from $\Sh(X)$ to $\Sh\mathbf{Rig}$, this term is simply the value of $F$ on the pro-object $\mathrm{cl}(U/X)$. Thus the definition equivalently says that every diagram
\[\xymatrix{ U\ar[r]\ar[d] & \mathrm{cl}(U/X)\ar[r]\ar@{-->}[dl] & X\ar@{=}[d] \\ F\ar[rr] && X }\]
has a unique extension $\mathrm{cl}(U/X)\rightarrow F$.

This is a particular case of an extension problem with $\P$ the class of formal closed embeddings. Since a morphism $U\rightarrow F$ from an arbitrary object of $\Sh\mathbf{Rig}_X$ locally factors through an open subset of $X$, it follows that we have unique solutions for any extension problem $U/V$. Thus we arrive at the following, equivalent for quasi-separated $X$, formulation of Deligne's definition:

\begin{defn}A small sheaf on a rigid analytic space $X$ (over $\F_1$ or $\Z$)  is \emph{Deligne overconvergent} if it is $\P$-overconvergent as an object of $\Sh\mathbf{Rig}_X$ with $\P$ the class of formal embeddings.\end{defn}

Deligne overconvergence implies overconvergence in the sense of definition \ref{SEP_DEF_PROPER}. For the converse statement, we have the following (compare lemma I.\zref{I_SEP_EMBED_PROPER}):

\begin{lemma}Let $U\hookrightarrow V$ a quasi-compact open immersion of $X$-spaces. The natural map \[\Hom_X(\mathrm{cl}(U/V),-) \tilde\rightarrow \Hom_X(\mathrm{Sur}_{U/V},-)\] is an isomorphism of functors on $\Sh(X)$.\end{lemma}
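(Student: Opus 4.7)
The plan is to exhibit a cofinal refinement of the pro-system defining $\mathrm{Sur}_{U/V}$ whose terms become canonically isomorphic in $\Sh\mathbf{Rig}_X$ to $\mathrm{cl}(U/V)$; once that is done the two pro-objects represent the same object and the claimed isomorphism on Homs is formal (and in fact holds for any $F$ in $\Sh\mathbf{Rig}_X$, not merely in $\Sh(X)$). Unpacking the definitions: $\mathrm{Sur}_{U/V}$ is the cofiltered limit of $\widetilde V \rigless Z$ over $f\P$-modifications $\widetilde V \to V$ whose restriction $\widetilde U:=\widetilde V\times_V U \to U$ is admissible, whereas $\mathrm{cl}(U/V) = \lim_{U^+/V^+} \mathrm{cl}(U^+/V^+) \rigless Z$ in the sense of Section \ref{SEP_CLOSURE}. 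Since formal embeddings are in particular formally projective, every term of the $\mathrm{cl}$-system already appears as a term of the $\mathrm{Sur}$-system; this accounts for the natural comparison map $\Hom_X(\mathrm{cl}(U/V),-)\to \Hom_X(\mathrm{Sur}_{U/V},-)$.

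For cofinality in the other direction, given any $f\P$-modification $\widetilde V\to V$ of the $\mathrm{Sur}$-system, form the formally embedded rigid analytic closure $\mathrm{cl}(\widetilde U/\widetilde V)\hookrightarrow \widetilde V$ of Section \ref{SEP_CLOSURE}. The composition $\mathrm{cl}(\widetilde U/\widetilde V)\to V$ is proper (a formal embedding composed with an $f\P$-map), and its image is the closure in $V$ of the image of $\widetilde U$; since $\widetilde U\to U$ is surjective (being admissible), this image coincides with $\mathrm{cl}(U/V)$ as a subspace of $V$. The resulting proper surjection
\[ \mathrm{cl}(\widetilde U/\widetilde V)\twoheadrightarrow \mathrm{cl}(U/V) \]
restricts over the open $U\subseteq \mathrm{cl}(U/V)$ to the admissible modification $\widetilde U\to U$. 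Identifying the marking inherited by $\mathrm{cl}(U/V)$ from $V$ with $\mathrm{cl}(U/V)\setminus U$, this surjection is itself admissible and hence an isomorphism in $\Sh\mathbf{Rig}$.

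Consequently every term of the $\mathrm{Sur}$-system admits a further refinement through the formally embedded closure $\mathrm{cl}(\widetilde U/\widetilde V)\hookrightarrow V$, and in $\Sh\mathbf{Rig}_X$ this refinement represents the same object as $\mathrm{cl}(U/V)$. Passing to cofiltered limits identifies $\mathrm{Sur}_{U/V}\cong \mathrm{cl}(U/V)$ as pro-objects in $\Sh\mathbf{Rig}_X$, which yields the desired isomorphism of functors. The main delicate point is the admissibility of the surjection $\mathrm{cl}(\widetilde U/\widetilde V)\twoheadrightarrow \mathrm{cl}(U/V)$; this rests on pinning down the marking of the closure, for which one needs $U$ to be dense in its formally embedded closure together with the standing hypothesis that the marking of $V$ does not meet $U$.
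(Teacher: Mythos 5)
Your proof hinges on the claim that the surjection $\mathrm{cl}(\widetilde U/\widetilde V)\twoheadrightarrow\mathrm{cl}(U/V)$ is admissible, and you justify this by ``identifying the marking inherited by $\mathrm{cl}(U/V)$ from $V$ with $\mathrm{cl}(U/V)\setminus U$.'' That identification is not available. The marking $Z$ is fixed data on $V$, intrinsic to the object of ${}_Z\mathbf{FSch}$; the marking that $\mathrm{cl}(U/V)$ inherits is $Z\cap\mathrm{cl}(U/V)$, not the complement of $U$. An admissible ($Z$-admissible) modification must be an isomorphism away from $Z$, and there is no reason for $\mathrm{cl}(\widetilde U/\widetilde V)\to\mathrm{cl}(U/V)$ to be an isomorphism over the locus $\mathrm{cl}(U/V)\setminus(U\cup Z)$, which is generally nonempty. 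Concretely: take $V^+$ with $U^+$ dense in $V^+$, so $\mathrm{cl}(U/V)=V$, and take $\widetilde V^+\to V^+$ a formally projective blow-up centred outside $Z$ but also outside $U^+$, so that its restriction to $U^+$ is the identity (hence admissible). Then $\mathrm{cl}(\widetilde U/\widetilde V)\to\mathrm{cl}(U/V)$ is just $\widetilde V^+\to V^+$, which is not a $Z$-admissible modification and therefore not an isomorphism in $\Sh\mathbf{Rig}$. Consequently $\mathrm{Sur}_{U/V}$ and $\mathrm{cl}(U/V)$ are \emph{not} isomorphic as pro-objects of $\Sh\mathbf{Rig}_X$, and the isomorphism of $\Hom$-functors cannot be proved for arbitrary $F\in\Sh\mathbf{Rig}_X$.

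This is exactly why the paper restricts the statement to the small topos $\Sh(X)$. The hypothesis $F\in\Sh(X)$ is used essentially: a section of $F$ over $\mathrm{Sur}_{U/V}$ is, by definition of the pullback from $\Sh(X)$, represented by a section over some \emph{open subset} $W\subseteq X$ through which some $\widetilde V$ factors. The paper then uses surjectivity of the blow-down $\widetilde V^+\to V^+$ (valid because $Z$ is Cartier) to conclude that $V^+\to W^+$, so the section is already defined on $V$, and in particular on $\mathrm{cl}(U/V)$. That argument works because ``factoring through an open $W\subseteq X$'' is a condition that can be checked on underlying topological images, where the surjectivity of the modification applies; it has no analogue for morphisms into a general object of $\Sh\mathbf{Rig}_X$. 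Your approach, which would establish the stronger statement for all of $\Sh\mathbf{Rig}_X$, overshoots what is true.
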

\begin{proof}Let $F\in\Sh(X)$ and $\mathrm{Sur}_{U/V}\rightarrow F$. By definition,
\[ F(\mathrm{Sur}_{U/V})\cong\colim_{\tilde V\rightarrow V}\colim_{\tilde V\rightarrow W\subseteq X}F(W) \]
locally on $V$; so any $f\in F(\mathrm{Sur}_{U/V})$ is represented by a section $W\rightarrow F$ such that $\tilde V\rightarrow W$ for some formally projective modification $\tilde V\rightarrow V$. We will show that this implies $V\rightarrow W$.

Without loss of generality, suppose $V$ is affine, and that we have models
\[\xymatrix{ &\tilde V^+\ar[d] \\ U^+\ar[r]\ar[ur] & V^+ }\]
such that $U^+\subseteq V^+$ is dense. By compactness we may also assume that $W$ is qcqs, and thus that $W\times_XV\hookrightarrow V$ has a model $\tilde V^+\rightarrow W^+\subseteq V^+$. 

But $\tilde V^+\rightarrow V^+$ is surjective, and so $W^+=V^+$.\end{proof}

\begin{prop}A sheaf on the small site of a rigid analytic space is overconvergent if and only if it is Deligne overconvergent.\end{prop}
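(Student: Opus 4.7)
The plan is to obtain the proposition as an essentially immediate corollary of the preceding lemma.

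First I would unpack the two notions as unique extension properties against the same class of diagrams. For a small sheaf $F$ on $X$, Deligne overconvergence asserts that for every quasi-compact open immersion $U\hookrightarrow V$ of $X$-spaces and every map $\varphi:U\to F$ in $\Sh(X)$, there is a unique extension $\mathrm{cl}(U/V)\to F$ compatible with $\varphi$. Overconvergence in the sense of definition \ref{SEP_DEF_PROPER}, read for a sheaf as $f\P$-overconvergence in the rigid topos over $X$, asserts that the same datum $(U\hookrightarrow V,\varphi)$ extends uniquely to $\mathrm{Sur}_{U/V}\to F$. In both cases the diagrams to be tested range over the same class: quasi-compact open immersions of $X$-spaces.

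The preceding lemma supplies, for each such $U\hookrightarrow V$, a natural isomorphism
\[ \Hom_X(\mathrm{cl}(U/V),F) \;\tilde\rightarrow\; \Hom_X(\mathrm{Sur}_{U/V},F). \]
By naturality applied to $U$, the bijection is compatible with restriction along $U\hookrightarrow\mathrm{cl}(U/V)$ and $U\hookrightarrow\mathrm{Sur}_{U/V}$. Hence the set of Deligne-style extensions of $\varphi$ bijects with the set of $f\P$-style extensions of $\varphi$, and one fibre is a singleton exactly when the other is. The two overconvergence conditions therefore coincide.

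The forward implication (Deligne overconvergent $\Rightarrow$ overconvergent) was already observed in the text before the lemma, so the real content is the converse, which is exactly what the lemma delivers. The main -- and only -- obstacle is already discharged in the lemma itself; everything else is bookkeeping to confirm that both definitions quantify over the same diagrams and that the bijection respects the map to $U$, both of which are formal. The proof should therefore be a two-line invocation of the lemma.
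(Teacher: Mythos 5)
Your proposal is correct and matches the paper's intent: the proposition appears in the text with no proof, immediately after the lemma identifying $\Hom_X(\mathrm{cl}(U/V),-)$ with $\Hom_X(\mathrm{Sur}_{U/V},-)$ on $\Sh(X)$, so it is indeed meant as a direct corollary, and your bookkeeping -- that both notions are unique-extension conditions over the same class of diagrams $U\hookrightarrow V$ and that the bijection commutes with restriction along $U$ -- is exactly the point being left tacit.
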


\begin{cor}\label{SEP_SUR_OPEN}Let $U\subseteq X$ be an open subset of a quasi-separated analytic space $X$. The following are equivalent:
\begin{enumerate}\item $U\subseteq X$ is overconvergent;
\item for any $W\subseteq U$ affine over $X$, $U$ contains $\mathrm{cl}(W/X)$;
\item for any $W\subseteq U$ quasi-compact over $X$, $U$ contains $\mathrm{cl}(W/X)$.\end{enumerate}\end{cor}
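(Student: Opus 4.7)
The plan is to deduce the corollary as a short consequence of the preceding proposition, which identifies the two notions of overconvergence for sheaves on the small site of $X$.

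First, I would establish i) $\Leftrightarrow$ iii). By the preceding proposition, condition i) is equivalent to saying that the representable sheaf $U$, viewed as a subsheaf of $X$ on the small site, is Deligne overconvergent. Unwinding the definition in \S\ref{SEP_DELIGNE}, this says that for every qcqs open $W \subseteq X$, every morphism $W \to U$ over $X$ admits a unique extension along $W \to \mathrm{cl}(W/X)$ to a morphism $\mathrm{cl}(W/X) \to U$. A morphism $W \to U$ exists (and is then forced to be the canonical inclusion) precisely when $W \subseteq U$; since $U \hookrightarrow X$ is a monomorphism, a filler $\mathrm{cl}(W/X) \to U$ exists iff $\mathrm{cl}(W/X) \subseteq U$ as subobjects of $X$, and in that case is automatically unique. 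Finally, because $X$ is quasi-separated, an open subset of $X$ is qcqs iff it is quasi-compact over $X$, so this is exactly iii).

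The implication iii) $\Rightarrow$ ii) is immediate, since any $W$ affine over $X$ is in particular quasi-compact over $X$. For ii) $\Rightarrow$ iii), I would use a finite affine cover. Given $W \subseteq U$ quasi-compact over $X$, choose a finite cover $W = \bigcup_{i=1}^n W_i$ by subspaces affine over $X$. Using the model-theoretic construction of the closure from \S\ref{SEP_CLOSURE}, one has
\[ \mathrm{cl}(W/X) = \bigcup_{i=1}^n \mathrm{cl}(W_i/X), \]
because a common model $X^+$ of $X$ in which each $W_i$ has an affine model $W_i^+ \subseteq X^+$ realises the formally embedded closure of $W$ in $X^+$ as the union of the closures of the $W_i^+$, and this persists on passage to the pro-system of admissible modifications. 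Condition ii) then gives $\mathrm{cl}(W_i/X) \subseteq U$ for each $i$, hence $\mathrm{cl}(W/X) \subseteq U$.

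The only substantive point is verifying the finite-union formula in the last step. This is really just bookkeeping with models, but it relies on being able to choose, for each $i$, compatible affine formal models $W_i^+ \hookrightarrow X^+$ inside a single global model of $X$; the transition maps in the pro-system are affine and thus respect finite unions, so the identity passes to the limit defining the rigid analytic closure. Beyond this, the argument is entirely formal from the equivalence with Deligne overconvergence.
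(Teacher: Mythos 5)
Your proof is essentially the argument the paper intends; the corollary is stated with no proof immediately after the Deligne--equivalence proposition, and deducing it by unwinding the Deligne condition for the subsheaf $U$ (using that $U\hookrightarrow X$ is a monomorphism) and then passing from quasi-compact $W$ to affine $W$ via a finite cover and the finite-union formula for formally embedded closures is the natural route.

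One small slip is worth flagging. In the step i) $\Leftrightarrow$ iii) you assert that, for $X$ quasi-separated, an open subset is qcqs iff it is quasi-compact over $X$. For $X$ not itself quasi-compact this fails: an open immersion $W\hookrightarrow X$ can be a quasi-compact morphism without $W$ being a quasi-compact space. The Deligne condition quantifies over qcqs opens of $X$, while iii) quantifies over the (a priori larger) class of opens whose inclusion is a quasi-compact morphism, so the two conditions do not coincide verbatim. The fix is standard: both overconvergence of $U\subseteq X$ and the inclusion $\mathrm{cl}(W/X)\subseteq U$ are local on $X$ (the latter because $\mathrm{cl}(W/X)$ is a sheaf of pro-objects on $\sh U^\mathrm{qcqs}_{/X}$, cf.\ \S\ref{SEP_CLOSURE}), so one reduces to $X$ qcqs, where the two classes of $W$ agree. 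The same localization is also implicitly needed for your finite-affine-cover step in ii) $\Rightarrow$ iii), since a $W$ quasi-compact over $X$ but not quasi-compact as a space need not admit a finite affine cover; after restricting to a qcqs open of $X$ it does. With these reductions made explicit, your argument is complete.
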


\subsection{Overconvergent site}\label{SEP_TOP}
The idea of recovering Berkovich's Hausdorff topology as a `coarsening' of the topology of rigid analytic spaces is apparently also due to Deligne \cite{Deligne}. I learned about it from \cite[\S8]{Hubook}, where it appeared under the name `partially proper' topology.

The generalities in this paragraph make sense for any spatial geometric context with a notion of overconvergence generated by a class of morphisms $\P$ as in \S I.\zref{I_SEP}; in particular, we will apply it to the topos of \emph{collages} below in \S\ref{AFF}. For simplicity and concreteness, we restrict attention here to the motivating case of rigid analytic spaces.


\paragraph{Large site} Let $X$ be a rigid analytic space over $\F_1$ or $\Z$, and let $\Sh\mathbf{Rig}^\mathrm{sur}_X$ denote the \emph{overconvergent topos} of $X$, that is, the category of overconvergent sheaves over $X$. By part \emph{v)} of proposition I.\zref{I_SEP_STABILITY}, every morphism in $\Sh\mathbf{Rig}^\mathrm{sur}_X$ is overconvergent. The category $\mathbf{Rig}^\mathrm{sur}_X$ of rigid analytic spaces overconvergent over $X$ is a full subcategory.

\begin{lemma}\label{hat}$\mathbf{Rig}^\mathrm{sur}_X$ is a spatial site (def. I.\zref{I_TOPOS_DEF}) for $\Sh\mathbf{Rig}^\mathrm{sur}_X$.\end{lemma}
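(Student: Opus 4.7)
The plan is to check the defining conditions of a spatial site (def.~I.\zref{I_TOPOS_DEF}) for $\mathbf{Rig}^\mathrm{sur}_X$ equipped with its inherited coverage. The key inputs are that $\mathbf{Rig}_X$ is already spatial for $\Sh\mathbf{Rig}_X$ via the Riemann--Zariski construction of \S\ref{RIG_RZ}, and that $\Sh\mathbf{Rig}^\mathrm{sur}_X$ is a subtopos of $\Sh\mathbf{Rig}_X$ cut out by a local overconvergence condition. The task thus reduces to transferring the spatial site structure through this subtopos inclusion.

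First I would verify that the Yoneda image of $\mathbf{Rig}^\mathrm{sur}_X$ lies in $\Sh\mathbf{Rig}^\mathrm{sur}_X$: for a rigid $X$-space $Y$ that is overconvergent, the representable $\Hom_X(-, Y)$ satisfies the unique-extension property along $\mathrm{Sur}_{U/V}$ by the very definition of $f\P$-overconvergence, so it is an overconvergent sheaf. Conversely, part v) of proposition~I.\zref{I_SEP_STABILITY} guarantees that every morphism in $\Sh\mathbf{Rig}^\mathrm{sur}_X$ is overconvergent, so this embedding is faithful and coherent with the subtopos structure. I would then equip $\mathbf{Rig}^\mathrm{sur}_X$ with the coverage inherited from $\mathbf{Rig}_X$, namely jointly surjective families of open immersions. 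Stability of $f\P$ under base change ensures that fibre products over $X$ of overconvergent $X$-spaces remain overconvergent, and corollary~\ref{SEP_SUR_OPEN} ensures that overconvergent open subobjects of overconvergent $X$-spaces are again in $\mathbf{Rig}^\mathrm{sur}_X$, so the coverage is well-defined on the subcategory.

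Generation of the subtopos by $\mathbf{Rig}^\mathrm{sur}_X$ follows because any section of an overconvergent sheaf $F$ over $U\in\mathbf{Rig}_X$ factors uniquely through the rigid analytic closure $\mathrm{cl}(U/X)$ of \S\ref{SEP_CLOSURE}; since such closures are themselves overconvergent as $X$-spaces, the representables from $\mathbf{Rig}^\mathrm{sur}_X$ are cofinal among objects of $\Sh\mathbf{Rig}_X$ mapping into $F$, and $F$ is determined by its restriction to $\mathbf{Rig}^\mathrm{sur}_X$. The main obstacle I anticipate is verifying that the inherited coverage generates exactly the topology on the overconvergent subtopos rather than a strictly coarser one; this should be extracted from corollary~\ref{SEP_SUR_OPEN}, which identifies overconvergent open subobjects with open subsets containing the closures of their quasi-compact parts, together with stability of qcqs-ness under the overconvergent localisation so that $\mathbf{Rig}^\mathrm{sur}_X$ remains a site of compact objects as required by the spatial axiom.
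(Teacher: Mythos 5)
Your proposal has a genuine gap at the generation step, and it also misses the technical heart of the paper's argument.

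The paper's proof reduces to showing that every morphism $U\to F$ (with $F$ an overconvergent sheaf and $U$ affine of finite type over $X$) factors through some overconvergent $X$-space, and the central obstacle is producing such a space of \emph{finite type}: embedding $U$ into a projective bundle $\P(\sh E)/X$ and extending by overconvergence gives a formally projective modification $\tilde\P(\sh E)\to F$, but $\tilde\P(\sh E)$ is a pro-object that may be of infinite type over $X$. The paper resolves this with the expanded-degeneration construction (I.\zref{I_SEP_EXP_DEG}) and proposition I.\zref{I_EXPANDED_DEGENERATION} to manufacture a finite-type intermediate $U^{\mathrm{\'el}}$ that is overconvergent over $X$. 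Your proof does not engage with this difficulty at all.

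Instead you appeal to the rigid analytic closure $\mathrm{cl}(U/X)$ of \S\ref{SEP_CLOSURE} and claim that any section of $F$ over $U\in\mathbf{Rig}_X$ factors through it, and that $\mathrm{cl}(U/X)$ is itself an overconvergent $X$-space. Both claims fail. First, $\mathrm{cl}(U/V)$ is only defined for open immersions $U\hookrightarrow V$; for a general finite-type $X$-space $U$, the expression $\mathrm{cl}(U/X)$ is not even meaningful, so the cofinality statement does not type-check precisely where it is needed (the generating objects of $\Sh\mathbf{Rig}_X$ are not open subsets of $X$). Second, even when it is defined, the closure is constructed as a cofiltered limit of formally embedded closures along affine transitions, and nothing in \S\ref{SEP_CLOSURE} asserts or implies that this limit is overconvergent over $X$; the remark there notes only that it can be realised as a rigid space (and is not formally embedded). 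The factoring-through-closures idea is the content of the \emph{small}-topos comparison in \S\ref{SEP_DELIGNE}, which identifies the Deligne-overconvergent sheaves on $\Sh(X)$; it does not transport to the \emph{large} topos $\Sh\mathbf{Rig}^{\mathrm{sur}}_X$ that lemma \ref{hat} is actually about. Your argument conflates these two topoi.

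In short: the checks you perform (Yoneda image lands in the subtopos, coverage restricts correctly, stability under pullback) are fine preliminaries, but the essential point — exhibiting, for each map from a finite-type affine, a \emph{finite-type} overconvergent intermediate — is exactly what is missing, and it is what the paper's use of expanded degenerations supplies.
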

\begin{proof}Since overconvergence is local on $X$, we may assume that $X$ is quasi-separated. We first show that the overconvergent topos is generated by analytic spaces formally \emph{projective} over $X$.  Since every object $F\in\Sh\mathbf{Rig}^\mathrm{sur}_X$ is a colimit in $\Sh\mathbf{Rig}_X$ of affine rigid analytic spaces of finite type over $X$, it will be enough to show that every morphism $U\rightarrow F$ from such a space $U$ factors through some object formally projective over $X$.

Since $U$ is of finite type over $X$, it may be embedded in some projective bundle $\P(\sh E)$. Overconvergence means that after replacing the latter with a formally projective modification, $\P(\sh E)\rightarrow F$. The problem is that $\P(\sh E)\rightarrow X$ may no longer be of finite type.

Applying the construction of expanded degenerations (I.\zref{I_SEP_EXP_DEG}) to the data $(U,\P(\sh E),Z=\text{reduction of }\P(\sh E))$, we obtain a morphism $U\subseteq U^\mathrm{\'el}\rightarrow F$. By proposition I.\zref{I_EXPANDED_DEGENERATION}, $U^\mathrm{\'el}\rightarrow X$ is overconvergent.\end{proof}

In other words, for any $X$, \[\mathbf{Rig}_X^\mathrm{sur}\subseteq\Sh\mathbf{Rig}^\mathrm{sur}_X\] is a spatial geometric context as in definition I.\zref{I_TOPOS_DEF} whose category of locally representable objects is the \emph{large overconvergent site} $\mathbf{Rig}_X^\mathrm{sur}$ of $X$.

Overconvergence being stable for composition and base change, a morphism $f:X\rightarrow Y$ naturally induces an essential spatial geometric morphism
\[ f:\Sh\mathbf{Rig}_X^\mathrm{sur}\rightarrow \Sh\mathbf{Rig}_Y^\mathrm{sur} \qquad  f_!:\mathbf{Rig}_X^\mathrm{sur}\leftrightarrows \mathbf{Rig}_Y^\mathrm{sur}:f^*\]
which, by locality on the base, makes $\Sh\mathbf{Rig}_-^\mathrm{sur}$ and $\mathbf{Rig}_-^\mathrm{sur}$ into \emph{stacks} on $\mathbf{Rig}$; the latter is locally a site for the former.

\paragraph{Small site} The terminal object $X$ of $\mathbf{Rig}_X^\mathrm{sur}$ has its own small topos $\Sh(X^\mathrm{sur})=\Sh(\sh U_{/X}^\mathrm{sur})$ which, although incoherent, is a subtopos of $\Sh(X)$ and therefore has enough points. In particular, it is \emph{spatial}, with determining sober topological space (or, if you prefer, locale) $X^\mathrm{sur}$. The geometric morphism $\Sh(X)\rightarrow\Sh(X^\mathrm{sur})$ induces a surjective continuous mapping
\[ b:X\rightarrow X^\mathrm{sur}\]
which the literature has called the \emph{separation map} (\cite[I.2.4.(c)]{FujiKato}). A morphism $f:X\rightarrow Y$ yields, by restriction from the large overconvergent topos, a natural geometric morphism $X^\mathrm{sur}\rightarrow Y^\mathrm{sur}$ such that the square
\[\xymatrix{ X\ar[r]^f\ar[d]_b & Y\ar[d]^b \\ X^\mathrm{sur}\ar[r]^f & Y^\mathrm{sur} }\]
commutes.

By definition, the square
\[\xymatrix{ \sh U_{/X}^\mathrm{sur} \ar[r]\ar[d]_{b^{-1}} & \mathbf{Rig}^\mathrm{sur}_X\ar[d] \\ \sh U_{/X}\ar[r] & \mathbf{Rig}_X }\]
is Cartesian. We would also like to know when the extended square
\[\xymatrix{ \Sh X^\mathrm{sur} \ar[r]\ar[d]_{b^*} & \Sh\mathbf{Rig}^\mathrm{sur}_X\ar[d] \\ \Sh X\ar[r] & \Sh\mathbf{Rig}_X }\]
is also Cartesian, so that every sheaf on the small site of $X$ that is overconvergent as an object of $\Sh\mathbf{Rig}_X$ is actually the pullback under $b$ of a sheaf on $X^\mathrm{sur}$.

\begin{lemma}\label{valid only when x is quasi-separated}Suppose that $X$ is purely analytic. Then $\Sh(X^\mathrm{sur})$ is the full subcategory of $\Sh(X)$ whose objects are overconvergent.\end{lemma}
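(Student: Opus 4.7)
The plan is to identify the essential image of the fully faithful functor $b^*:\Sh(X^\mathrm{sur})\hookrightarrow\Sh(X)$, which is defined because $b$ is a surjection of locales/topoi (every overconvergent open injects into $X$). This essential image consists of those $F\in\Sh(X)$ for which the counit $b^*b_*F\rightarrow F$ is an isomorphism, and the goal is to show it coincides with the overconvergent sheaves. By the immediately preceding proposition, overconvergent and Deligne overconvergent sheaves coincide, so I may use Deligne's formula freely.

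I would first verify the easy inclusion: every $F := b^*G$ is overconvergent. Since $b$ acts as the identity on overconvergent opens, $F(V) = G(V)$ whenever $V\in\sh U^\mathrm{sur}_{/X}$; Deligne overconvergence for $F$ then follows from the cofinality statement below.

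For the converse, start with an overconvergent $F\in\Sh(X)$ and set $G := b_*F|_{\sh U^\mathrm{sur}_{/X}}$, so that $G(V)=F(V)$ for overconvergent $V$. By the previous proposition, for any qcqs open $U\subseteq X$,
\[ F(U)\ \tilde\leftarrow\ \colim_{V\supseteq\mathrm{cl}(U/X)} F(V). \]
If I can show that the \emph{overconvergent} $V\supseteq\mathrm{cl}(U/X)$ are cofinal in this colimit, then each $F(V) = G(V)$ and the right-hand side computes $(b^*G)(U)$, giving the counit isomorphism.

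The key technical step is the cofinality: every open $V_0\supseteq\mathrm{cl}(U/X)$ is contained in an overconvergent $V^\mathrm{sur}$ still containing $\mathrm{cl}(U/X)$. I would construct $V^\mathrm{sur}$ by iterative enlargement: set $V_{n+1}$ to be a genuine open neighbourhood of the pro-object $\mathrm{cl}(V_n/X)$ in $X$, and take $V^\mathrm{sur}:=\bigcup_n V_n$. Any qcqs $W\subseteq V^\mathrm{sur}$ sits inside some $V_n$ by quasi-compactness, giving $\mathrm{cl}(W/X)\subseteq\mathrm{cl}(V_n/X)\subseteq V_{n+1}\subseteq V^\mathrm{sur}$; hence $V^\mathrm{sur}$ is overconvergent by corollary \ref{SEP_SUR_OPEN}.

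The main obstacle is justifying, at each step, the existence of a genuine open $V_{n+1}\supseteq\mathrm{cl}(V_n/X)$ inside $V_0$ (or at least inside some prescribed open of $X$). This is where \emph{purely analytic} is essential: for such $X$, every formal model has all algebraic subschemes marked (def. \ref{RIG_PURELY}), which ensures that the formally embedded closure $\mathrm{cl}(V_n/X)$ descends to a genuine closed subset of the Riemann-Zariski space $X$, admitting a cofinal base of honest open neighbourhoods contained in any prescribed open. In the absence of this hypothesis, the pro-object $\mathrm{cl}(V_n/X)$ may escape into purely formal directions that are not resolved by genuine opens of $X$, and the iteration breaks down.
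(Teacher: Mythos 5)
Your cofinality reduction is the right strategic idea, but there is a directional error that propagates through the argument and leaves a genuine gap.

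You correctly observe that the needed statement is: the overconvergent opens containing $\mathrm{cl}(U/X)$ are cofinal among \emph{all} opens containing $\mathrm{cl}(U/X)$. Since the colimit is filtered by reverse inclusion, cofinality means: for every open $V_0\supseteq\mathrm{cl}(U/X)$ there must exist an overconvergent $V^\mathrm{sur}$ with $\mathrm{cl}(U/X)\subseteq V^\mathrm{sur}\subseteq V_0$. But your ``key technical step'' asserts the opposite containment --- that $V_0$ is \emph{contained in} an overconvergent $V^\mathrm{sur}$ --- and your iterative enlargement produces exactly this wrong direction: since $\mathrm{cl}(V_n/X)\supseteq V_n$, the sequence $V_0\subseteq V_1\subseteq\cdots$ is increasing, so $V^\mathrm{sur}=\bigcup_n V_n\supseteq V_0$. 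Such a $V^\mathrm{sur}$ is larger than $V_0$ and contributes nothing to cofinality. You half-notice the problem in your final paragraph, where you try to additionally impose $V_{n+1}\subseteq V_0$ --- but then $V_0\subseteq V_{n+1}\subseteq V_0$ forces $V_{n+1}=V_0$ at every stage and $\mathrm{cl}(V_n/X)\subseteq V_0$, i.e.\ $V_0$ was already overconvergent, so the iteration never does anything. The ``obstacle'' you flag is not a technical wrinkle to be fixed by pure analyticity; it is an internal contradiction in the construction, and pure analyticity does not rescue it (you invoke it only to say $\mathrm{cl}(V_n/X)$ is a genuine closed set, which is true but beside the point).

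The paper's proof avoids this shrinking problem entirely by running the argument of lemma~\ref{hat}: given a section $s\in F(U)$ with $U\subseteq X$ quasi-compact open and $F$ overconvergent, one uses the \emph{expanded degeneration} construction $U^{\mathrm{\'el}}$ (I.\zref{I_SEP_EXP_DEG}, I.\zref{I_EXPANDED_DEGENERATION}) to produce \emph{directly} an overconvergent object through which $U\rightarrow F$ factors, rather than trying to locate one inside an arbitrary given $V_0$. Pure analyticity enters at precisely one point: it guarantees that $U^{\mathrm{\'el}}\rightarrow X$ is an open immersion rather than merely formally projective, so the factorisation happens through an honest overconvergent open subset of $X$, as the small-site statement requires. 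If you want to salvage a shrinking/cofinality argument, you would need a construction of an overconvergent open $W$ with $\overline{U}\subseteq W\subseteq V_0$ --- something like iterating outward from $U$ with $W_{n+1}$ squeezed between $\mathrm{cl}(W_n/X)$ and $V_0$ --- but controlling $\mathrm{cl}(W_n/X)\subseteq V_0$ at every stage is exactly the nontrivial content, and nothing in your current write-up establishes it.
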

\begin{proof}Follows as in the proof of lemma \ref{hat}, with $U\subseteq X$ a quasi-compact open subset. Since $X$ is purely analytic, $U^\mathrm{\'el}\rightarrow X$ is an open immersion.
\end{proof}

\begin{eg}This statement is false for formal schemes. For instance, any non-simply-connected Noetherian formal scheme has finite covering spaces, but no non-trivial overconvergent open subsets over which to find a section.\end{eg}

\subsection{Local compactness}

Overconvergent open sets are almost never quasi-compact. For practical reasons, it is often easier to work with quasi-compact objects; hence the following definition:

\begin{defns}[Overconvergent coverings]\label{SEP_SUR_COVER}A covering in $\mathbf{Rig}_X$ is \emph{overconvergent} if it can be refined by a covering in $\mathbf{Rig}^\mathrm{sur}_X$.

\label{SEP_COMPACT}An analytic space $X$ is said to be \emph{overconvergent-locally compact} if every qcqs open subset of $X$ admits a qcqs overconvergent neighbourhood. It is \emph{(overconvergent-)locally convex} if the neighbourhood can always be taken \emph{affine}. We usually abuse notation by omitting the prefix `overconvergent'.\end{defns}

Locally compact analytic spaces over $\Z$ were called variously \emph{locally quasi-compact} and \emph{strongly locally compact} in \cite[II.4.4.1]{FujiKato}; locally convex spaces are what Berkovich calls \emph{good} \cite{Berketale}. 

Any qcqs space is locally compact, and any affine analytic space is locally convex. Local convexity is usually only a reasonable condition in the non-affine case when the overconvergent topology is reasonable, that is, when the space is purely analytic.


\begin{lemma}\label{SEP_PARA=>LOCALLY}A paracompact analytic space is locally compact.\end{lemma}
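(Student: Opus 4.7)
The plan is, given a qcqs open $U \subseteq X$, to construct a qcqs open $V \subseteq X$ that contains $U$ and is overconvergent in $X$.

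First I would invoke paracompactness to fix a locally finite cover $\{X_\alpha\}_{\alpha \in A}$ of $X$ by overconvergent qcqs open subsets — the natural form of paracompactness in the rigid setting. Since $U$ is quasi-compact and the cover is locally finite, only finitely many indices $\alpha_1,\ldots,\alpha_k$ satisfy $X_{\alpha_i}\cap U \neq \emptyset$. Because the $X_\alpha$ cover $X$, it follows that $U \subseteq V := X_{\alpha_1} \cup \cdots \cup X_{\alpha_k}$, and $V$ is qcqs as a finite union of qcqs opens.

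Next I would verify overconvergence of $V$ in $X$ via corollary \ref{SEP_SUR_OPEN}: for any qcqs open $W \subseteq V$, one must show $\mathrm{cl}(W/X) \subseteq V$. Decomposing $W = \bigcup_{i=1}^k (W\cap X_{\alpha_i})$ and using overconvergence of each $X_{\alpha_i} \subseteq X$, each piece satisfies $\mathrm{cl}(W\cap X_{\alpha_i}/X) \subseteq X_{\alpha_i}$. Appealing to the fact that the rigid analytic closure of \S\ref{SEP_CLOSURE} commutes with finite unions of open subsets — which is inherited from the same property of formally embedded closures of qcqs opens inside any fixed formal model — one concludes $\mathrm{cl}(W/X) = \bigcup_i \mathrm{cl}(W\cap X_{\alpha_i}/X) \subseteq V$, as required.

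The main obstacle concerns the first step: extracting from \emph{paracompact} a locally finite cover by \emph{overconvergent} qcqs opens. If this is built into the definition of paracompactness adopted in the paper, the rest of the argument is essentially automatic. Otherwise, one has to produce such a cover from a merely qcqs locally finite refinement, which on its face is circular with the lemma; a likely way out is to perform the neighbourhood construction at the level of a formal model (a paracompact rigid space having enough models, cf. \S\ref{RIG_MODELS}) and then descend, with the finiteness of the model-theoretic closure operation guaranteed by local finiteness of the cover together with quasi-compactness of $\mathrm{cl}(U/X)$ as a pro-object.
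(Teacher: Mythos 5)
The central step of your primary argument — extracting a locally finite cover of $X$ by \emph{overconvergent} qcqs opens from paracompactness — has no footing. Paracompactness (and the "enough models" condition of \S\ref{RIG_MODELS}) only furnishes locally finite covers by qcqs opens, and the paper's own aside at the start of \S\ref{SEP_TOP} says explicitly that overconvergent open sets are almost never quasi-compact; a cover of the kind you want is essentially the thing the lemma is asserting exists locally, so the first step is circular, as you yourself suspect in your closing paragraph. Moreover, the plan itself is aiming at a stronger target than necessary: you want to produce $V$ that is overconvergent \emph{as an open subset of} $X$ in the sense of corollary \ref{SEP_SUR_OPEN}, whereas a "qcqs overconvergent neighbourhood of $U$" (def.\ \ref{SEP_COMPACT}) only requires a qcqs $X$-space containing $U$ as an open piece and mapping properly to $X$ — it need not be an overconvergent-open subspace, and indeed need not be an open subspace at all.

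The paper's proof goes a different, much shorter route that coincides with the "likely way out" you sketch but do not carry through: choose a formal model, so that $X$ may be taken to be a paracompact formal scheme; take $U_0\subseteq X$ affine; observe that by paracompactness the formally embedded closure $\mathrm{cl}(U_0/X)$ is quasi-compact; and then $\mathrm{cl}(U_0/X)$ itself is the required qcqs overconvergent neighbourhood, being a closed (hence proper) formal subscheme of $X$ containing $U_0$. No decomposition of $W$ into pieces, no appeal to commutation of closure with finite unions, and no covering by overconvergent opens is needed. If you want to salvage your presentation, drop the ambition of making $V$ overconvergent-open, pass to the formal model first, and note that the closure does the job directly.
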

\begin{proof}By choosing a model we may reduce to the case of formal schemes. Let $X$ be a paracompact formal scheme, $U_0\subseteq X$ be an affine open immersion. Since $X$ is paracompact, $\mathrm{cl}(U_0/X)$ is quasi-compact.\end{proof}

\begin{lemma}\label{SEP_SUR=>LOCALLY}Let $S$ be qcqs, $X$ overconvergent over $S$. Then $X$ is locally compact.\end{lemma}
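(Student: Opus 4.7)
My plan is to construct, for each qcqs open $U \subseteq X$, a qcqs overconvergent neighborhood directly from the overconvergence of $X/S$, via a Nagata-style compactification argument enabled by qcqs-ness of $S$. The proof will parallel lemma \ref{SEP_PARA=>LOCALLY}, with $f\P$-overconvergence of the structural map playing the role that paracompactness of an ambient model plays there.

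First, since $U$ is quasi-compact and $X/S$ is locally of finite type, the restricted morphism $U \to S$ is of finite type. Choosing formal models $U^+ \to S^+$ and covering the qcqs formal scheme $S^+$ by finitely many affine opens, I would embed the corresponding pieces of $U^+$ into relative projective bundles over $S^+$, take closures, and glue the results to obtain a dense open immersion $U^+ \hookrightarrow \bar U^+$ with $\bar U^+/S^+$ formally projective.

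Second, I would invoke $f\P$-overconvergence of $X/S$: the extension problem posed by the open immersion $U \hookrightarrow \bar U$ into $X$ over $S$ admits a unique solution after an admissible modification of $\bar U^+$, yielding a morphism $\bar U \to X$ over $S$. Applying the expanded degeneration construction from the proof of lemma \ref{hat} to the triple $(U, \bar U, Z = \bar U \setminus U)$ then produces an $X$-space $U^{\mathrm{\'el}}$, qcqs over $X$ and overconvergent over $X$ by proposition I.\zref{I_EXPANDED_DEGENERATION}.

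The main obstacle is to realize $U^{\mathrm{\'el}}$ as an honest qcqs \emph{open subspace} $V$ of $X$ containing $U$, rather than merely as an overconvergent morphism into $X$. When $X$ is purely analytic this is immediate from the argument of lemma \ref{valid only when x is quasi-separated}. In general, one combines uniqueness of the extension $\bar U \to X$ with the surjectivity of admissible modifications over the Cartier marking of $\bar U^+$ to identify $U^{\mathrm{\'el}}$ with its image in $X$, which then furnishes the required qcqs overconvergent neighborhood $V$ of $U$.
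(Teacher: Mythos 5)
Your overall strategy matches the paper's: compactify $U$ over $S$ via a projective morphism, feed the resulting open immersion into the $f\P$-overconvergence of $X/S$ to extend $U\rightarrow X$ to the compactification (after modification), and deduce that the resulting object is the required neighbourhood. Two remarks on where you depart from the paper and on your "main obstacle."

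The paper's proof takes $U$ \emph{affine} (this suffices, since a qcqs open is a finite union of affines and the property is local), immerses $U/S$ directly into a single projective bundle $\P(\sh E)/S$ without a Nagata-style glueing step, and then concludes in one stroke: by part \emph{v)} of proposition I.\zref{I_SEP_STABILITY} applied to $\tilde\P(\sh E)\rightarrow X\rightarrow S$, the extension $\tilde\P(\sh E)\rightarrow X$ is automatically $f\P$-proper, hence overconvergent over $X$. The paper thus has no need for the expanded-degeneration construction from lemma \ref{hat}; the cancellation lemma handles the finiteness/propriety over $X$ directly. Your extra detour through $U^{\mathrm{\'el}}$ is not wrong, but it re-imports machinery that the stability proposition already dispenses with.

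The third paragraph of your argument rests on a misreading of definition \ref{SEP_COMPACT}: a ``qcqs overconvergent neighbourhood'' of $U$ is a qcqs $X$-space, overconvergent (or proper) over $X$, through which $U\rightarrow X$ factors --- it is emphatically \emph{not} required to be an open subspace of $X$. Compare lemma \ref{SEP_PARA=>LOCALLY}, where the neighbourhood supplied is the formally embedded closure $\mathrm{cl}(U_0/X)$, a closed (not open) subspace; or the paper's own proof here, where the neighbourhood is $\tilde\P(\sh E)$ with an $f\P$-proper map to $X$. So the ``main obstacle'' you set yourself --- identifying $U^{\mathrm{\'el}}$ with an open subspace $V\subseteq X$ --- does not exist, and the surjectivity-of-admissible-modifications gambit you sketch to overcome it is moot. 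Once this is dropped, your proof is essentially a longer-winded version of the paper's.
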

\begin{proof}Let $U\hookrightarrow X$ be an affine open subset. Then $U\rightarrow S$ is of finite type, and so may be immersed into some projective bundle $\P(\sh E)/S$. By overconvergence, there is a formally projective morphism $\tilde\P(\sh E)\rightarrow \P(\sh E)$ and an extension
\[\xymatrix{ U\ar[r]\ar[d] & \tilde\P(\sh E)\ar[d]\ar@{-->}[dl] \\ X\ar[r] & S }\]
and by part \emph{v)} of proposition I.\zref{I_SEP_STABILITY}, $\tilde\P(\sh E)\rightarrow X$ is $f\P$-proper, and hence an overconvergent neighbourhood of $U$.\end{proof}

\begin{prop}Let $X\in\mathbf{Rig}$ be purely analytic and locally compact. Then:
\begin{enumerate}\item $X$ admits an overconvergent covering by qcqs open sets;
\item $X$ admits a covering by quasi-separated overconvergent open sets.\end{enumerate}
In particular, a morphism $X\rightarrow S$ is overconvergent if and only if it is overconvergent on every quasi-separated overconvergent open subset.\end{prop}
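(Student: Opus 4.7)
I would prove both (i) and (ii) from a single iterative-union construction, and derive the ``in particular'' from source-locality of overconvergence on overconvergent covers.

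Cover $X$ by qcqs open sets $\{U_i\}$, as is possible for any rigid space. For each $i$, set $V_i^{(0)} := U_i$ and inductively use local compactness to pick a qcqs open $V_i^{(n+1)} \subseteq X$ with $V_i^{(n+1)} \supseteq \mathrm{cl}(V_i^{(n)}/X)$, i.e., a qcqs overconvergent neighbourhood of $V_i^{(n)}$ in $X$. Define $W_i := \bigcup_{n \geq 0} V_i^{(n)}$.

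For (ii), the open $W_i$ is quasi-separated because any two qcqs subobjects $A, B \subseteq W_i$ both lie inside some $V_i^{(n)}$ (by quasi-compactness and the nesting of the chain), and within the quasi-separated $V_i^{(n)}$ their intersection is qcqs. The open immersion $W_i \hookrightarrow X$ is overconvergent by corollary \ref{SEP_SUR_OPEN}: for any qcqs $A \subseteq W_i$ over $X$, $A$ is contained in some $V_i^{(n)}$, so $\mathrm{cl}(A/X) \subseteq \mathrm{cl}(V_i^{(n)}/X) \subseteq V_i^{(n+1)} \subseteq W_i$. Since $W_i \supseteq U_i$, the $W_i$ jointly cover $X$.

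For (i), the family $\{W_i\}$ is itself a covering of $X$ in $\mathbf{Rig}^{\mathrm{sur}}_X$, and each $W_i$ is exhausted by its qcqs pieces $V_i^{(n)}$. The resulting family $\{V_i^{(n)}\}_{i,n}$ is a cover of $X$ by qcqs opens which generates the same covering data as $\{W_i\}$ up to the refinement relation of definition \ref{SEP_SUR_COVER}, so it qualifies as an overconvergent covering by qcqs opens. Finally, for the ``in particular'' claim: the $\{W_i\}$ of (ii) form an overconvergent source-cover of $X$, hence by stability of overconvergence under composition and source-locality on overconvergent covers (proposition I.\zref{I_SEP_STABILITY}), $X \to S$ is overconvergent iff each $W_i \to S$ is, which is exactly the stated criterion. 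The main obstacle is the iterative construction in (ii): one must be careful that the $V_i^{(n)}$ absorb closures taken in $X$ itself (not just in some ambient formal model or qcqs neighbourhood), so that $W_i \hookrightarrow X$ really passes the overconvergence test of corollary \ref{SEP_SUR_OPEN}.
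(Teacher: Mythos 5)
Your proof of part (ii) takes a genuinely different route from the paper's. The paper proves (i) directly by the expanded degeneration construction of I.\zref{I_SEP_EXP_DEG}, and then observes that (i) immediately implies (ii): an overconvergent covering by qcqs opens can, by definition \ref{SEP_SUR_COVER}, be refined by a covering in $\mathbf{Rig}^{\mathrm{sur}}_X$, whose members are open immersions (lemma \ref{valid only when x is quasi-separated}, using pure analyticity) landing inside qcqs, hence quasi-separated, opens. Your iterative ``telescoping'' union $W_i = \bigcup_n V_i^{(n)}$ is a reasonable alternative construction for (ii), but it comes with two unacknowledged obligations: corollary \ref{SEP_SUR_OPEN}, which you invoke to certify $W_i \hookrightarrow X$ overconvergent, is stated for $X$ quasi-separated, which the proposition does not assume; and the inclusion $\mathrm{cl}(V_i^{(n)}/X) \subseteq V_i^{(n+1)}$ requires knowing that ``qcqs overconvergent neighbourhood'' (the phrase in def.\ \ref{SEP_COMPACT}) entails containment of the formally embedded closure, which is not simply the definition and should be cited or argued.

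The more serious problem is the derivation of (i) from (ii). Definition \ref{SEP_SUR_COVER} declares a covering in $\mathbf{Rig}_X$ overconvergent if it \emph{can be refined by} a covering in $\mathbf{Rig}^{\mathrm{sur}}_X$ --- that is, the overconvergent covering must be \emph{finer}. But each $W_i$ strictly contains every $V_i^{(n)}$, so $\{W_i\}$ does not refine $\{V_i^{(n)}\}_{i,n}$; the refinement goes in exactly the wrong direction. Saying the two families ``generate the same covering data up to refinement'' is not true as stated: $\{V_i^{(n)}\}$ refines $\{W_i\}$, but not conversely, and it is the converse direction that is needed. To rescue (i) from your construction you would need a lemma saying that between $V_i^{(n)}$ and its qcqs overconvergent neighbourhood $V_i^{(n+1)}$ there sits an overconvergent open subset $O_i^{(n)}$ of $X$; then $\{O_i^{(n)}\}$ would refine $\{V_i^{(n+1)}\}$ and certify it as an overconvergent covering. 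That interpolation lemma is plausible (it is essentially what prop.\ \ref{AFF_MODIFICATION} says in the polyhedral case, and what the expanded degeneration machinery delivers in general) but it is precisely the content you would need to prove and have not, whereas the paper sidesteps it by taking the expanded degeneration route to (i) first.

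Your argument for the ``in particular'' clause, via source-locality of overconvergence over the overconvergent cover $\{W_i\}$ and proposition I.\zref{I_SEP_STABILITY}, is fine once (ii) is secured.
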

\begin{proof}First note that \emph{i)} immediately implies \emph{ii)} by the definition of overconvergent cover and the fact that any open subset of a quasi-separated space is quasi-separated. The proof of \emph{i)} follows from the construction of expanded degenerations (I.\zref{I_SEP_EXP_DEG}).\end{proof}

\begin{remark}Via the theory of collages exhibited in \S\ref{AFF_COLL}, it is easy enough to find examples of analytic spaces that are locally of finite type over $\F_1(\!(t)\!)$ but not locally compact; indeed, any non-locally-compact subset of $\R^n$ that is exhausted by rational polyhedra will do the trick.

In fact, this type of approach seems to permit the construction of toric analytic spaces with any egregious topological property imaginable.
\end{remark}

\subsection{Hausdorff quotient}
Under a certain technical assumption on an analytic space $X$, proposition \ref{SEP_SUR_OPEN} has point-set-topological consequences.
\begin{itemize}\item[(CL)] The formally embedded closure of a quasi-compact open immersion into $X$ is closed.\end{itemize}
It is equivalent that this be true for a coinitial family of models of $X$, since $X$ is weakly topologised with respect to such a family. Condition (CL) is manifestly satisfied by all analytic spaces over $\Z$ and \emph{quasi-integral} analytic spaces over $\F_1$.

\begin{remark}Let $A\rightarrow A[f^{-1}]$ be a localisation of discrete $\F_1$-algebras. The precise condition that $A,f$ must satisfy for the embedded closure of $\Spec A[f^{-1}]$ to be closed in $\Spec A$ is that the action of $f$ on $A/\mathrm{Ann}(f)$ be \emph{injective}. Therefore, for example, any $\F_1$-algebra with non-trivial idempotents will fail our condition.\end{remark}

Under condition (CL) and by corollary \ref{SEP_SUR_OPEN}, an open subset $U$ of $X$ is overconvergent if and only if, for every quasi-compact open immersion factoring through $U$, the point set topological closure is also in $U$. 

If $X$ is quasi-separated, then by \cite[I.2.3.4]{FujiKato} it is enough that the closure of every \emph{point} of $U$ is in $U$; thus, our definition is equivalent to \emph{op. cit.} I.2.4.10. In the case of purely analytic spaces (def. \ref{RIG_PURELY}), the arguments of \emph{op. cit.} \S II.4.1 apply to show that we are in the context of what the authors call `valuative' spaces. We may therefore apply the results of \emph{op. cit.} \S I.2.4.(d).

\begin{remark}If $X$ is a locally Noetherian topological space, then $X^\mathrm{sur}$ is just a single point. This includes most interesting formal schemes. A similar statement applies to any connected rigid space containing a formal scheme. As such, the overconvergent topology is only likely to be interesting for purely analytic rigid spaces (def. \ref{RIG_PURELY}).\end{remark}

\begin{thm}[Properties of the overconvergent topology]\label{SEP_SUR}Let $X$ be a rigid analytic space satisfying the condition $\mathrm{(CL)}$ - for instance, any analytic space over $\Z$ or any quasi-integral analytic space over $\F_1$. Then:
\begin{enumerate}\item $X^\mathrm{sur}$ is compactly generated and T1;
\item if $X$ is overconvergent-locally compact (def. \ref{SEP_SUR_COVER}), then $X^\mathrm{sur}$ is locally compact.\end{enumerate}
If $X$ is moreover purely analytic and quasi-separated, then $X^\mathrm{sur}$ is a universal Hausdorff quotient of $X$.\end{thm}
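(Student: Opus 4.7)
The plan is to leverage the concrete description of overconvergent open subsets provided by corollary \ref{SEP_SUR_OPEN} under (CL): an open $U \subseteq X$ is overconvergent if and only if $\overline{W} \subseteq U$ for every qcqs open $W \subseteq U$, where $\overline{W}$ denotes the point-set closure. The separation map $b:X\to X^{\mathrm{sur}}$ is then a topological quotient identifying pairs of points of $X$ that cannot be distinguished by such specialization-closed opens; in particular, any overconvergent $V\subseteq X$ is $b$-saturated, so $b(V)$ is open in $X^{\mathrm{sur}}$.

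For T1, I would fix $y\in X^{\mathrm{sur}}$ and show that $b^{-1}(X^{\mathrm{sur}}\setminus\{y\})$ is overconvergent. Under (CL), the fibre $b^{-1}(y)$ consists of points sharing the same family of specialization-closed neighbourhoods. Applying the valuative-space description of \cite[\S I.2.4]{FujiKato}, one realises $b^{-1}(y)$ as a nested intersection of closures $\overline W$ of qcqs opens, so its complement satisfies the criterion of corollary \ref{SEP_SUR_OPEN} directly. Compact generation then follows because $X$ is covered by qcqs opens $V$ whose images $b(V)$ are quasi-compact (continuous images of qc spaces); the quotient topology on $X^{\mathrm{sur}}$ is the final topology with respect to these inclusions, expressing $X^{\mathrm{sur}}$ as a quotient of a disjoint union of quasi-compact T1 spaces. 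For part (ii), if $X$ is overconvergent-locally compact, then each $y\in X^{\mathrm{sur}}$ lifts to some $p\in X$ contained in a qcqs overconvergent open $V$; then $b(V)$ is both open (by saturation) and quasi-compact, yielding the desired compact-neighbourhood basis.

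For the final statement, Hausdorffness of $X^{\mathrm{sur}}$ under the purely analytic and quasi-separated hypotheses follows from the valuative-space theory of \cite[\S I.2.4.(d)]{FujiKato}: two distinct points of $X^{\mathrm{sur}}$ correspond to incomparable maximal valuations on $X$, which are separated by disjoint specialization-closed opens. The universal property of the Hausdorff quotient is then a one-line verification using the same image-of-qcqs argument: given a continuous $f:X\to Y$ with $Y$ Hausdorff and $V\subseteq Y$ open, any qcqs $W\subseteq f^{-1}(V)$ has image $f(W)\subseteq V$ quasi-compact and hence closed in $Y$, so
\[f(\overline W)\subseteq\overline{f(W)}=f(W)\subseteq V,\]
showing $\overline W\subseteq f^{-1}(V)$, whence $f^{-1}(V)$ is overconvergent and $f$ factors uniquely through the quotient map $b$.

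The main obstacle I anticipate is the T1 step, since it is asserted under (CL) alone and must therefore be established purely from the specialization structure of the formal models, without the cleaner separation by disjoint neighbourhoods afforded by the purely analytic hypothesis. The technical crux is showing that the fibres $b^{-1}(y)$ are \emph{small} enough to have overconvergent complements; in the $\F_1$ setting this may require a further reduction to quasi-integral components, while in the $\Z$ setting it should fall out of the general valuative framework without further qualification.
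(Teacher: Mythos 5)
Your treatment of the universal Hausdorff quotient property matches the paper's line for line: qcqs $W\subseteq f^{-1}(V)$ has compact hence closed image, forcing $\overline W\subseteq f^{-1}(V)$, whence $f^{-1}(V)$ is overconvergent by corollary \ref{SEP_SUR_OPEN}. The compact-generation and local-compactness claims are also in the same spirit as the paper's (terse) arguments, and both you and the paper defer Hausdorffness itself to \cite[I.2.5.8]{FujiKato}.

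The genuine gap is in the T1 step, exactly where you yourself flag unease. Two problems. First, you invoke the valuative-space description of \cite[\S I.2.4]{FujiKato} to describe the fibre $b^{-1}(y)$, but the paper only establishes that rigid spaces fall into the ``valuative'' framework of \emph{op.~cit.}\ in the \emph{purely analytic} case; T1 is asserted under (CL) alone (which also covers formal schemes and quasi-integral $\F_1$-spaces). So the tool you reach for is not available under the hypotheses in force. Second, even granting a presentation $b^{-1}(y)=\bigcap_i\overline{W_i}$ with $W_i$ a nested family of qcqs opens, the deduction that $X\setminus b^{-1}(y)$ satisfies the criterion of corollary \ref{SEP_SUR_OPEN} is not completed: given qcqs $W'\subseteq X\setminus b^{-1}(y)$, you can get $W'\cap W_i=\emptyset$ for some $i$ by quasi-compactness, hence $\overline{W'}\cap W_i=\emptyset$, but that only confines $\overline{W'}\cap b^{-1}(y)$ to the boundary $\overline{W_i}\setminus W_i$, which is not automatically empty.

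The paper's T1 argument is more elementary and avoids both issues. One shows directly that specialization-related points of $X$ become topologically indistinguishable in $X^\mathrm{sur}$: if $x\in\overline{\{y\}}$ then trivially every open containing $x$ contains $y$; conversely, for any overconvergent open $U\ni y$, one passes to a qcqs open $X_0$ containing both points (so one may assume quasi-separated), picks a qcqs neighbourhood $W$ of $y$ inside $U$, and applies corollary \ref{SEP_SUR_OPEN} to conclude $\overline W\subseteq U$; since $x\in\overline{\{y\}}\subseteq\overline W$, also $x\in U$. This needs only (CL) and is stated entirely in terms of the closure criterion, with no appeal to the valuative theory. If you want to retain your ``show $b^{-1}(y)$ has overconvergent complement'' framing, you will still need this specialization-collapsing argument as the key input, and you should drop the citation to the valuative-space description.
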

\begin{proof}\emph{Compact generation}. A rigid analytic space is topologically a colimit of quasi-compact open subsets, and overconvergence is detected on each subset. We will see shortly that in fact, affine subsets become compact Hausdorff.

\emph{T1.} A space is T1 if and only if points are closed, so we have to prove that points of $X$ related by specialisation are topologically indistinguishable in $X^\mathrm{sur}$. Let $x\in\overline{\{y\}}$. Then every open neighbourhood of $x$ contains $y$. In particular, there is a qcqs open set $X_0$ containing both points. Now let $U\subseteq X$ be an overconvergent open neighbourhood of $Y$. By pulling back to $X_0$, we may assume the ambient space is quasi-separated. There is a qcqs neighbourhood of $y$ contained in $U$. By corollary \ref{SEP_SUR_OPEN}, its closure, and in particular $x$, is contained in $U$.

\emph{Locally compact}. Clear from the definition.

\emph{Hausdorff.} This is \cite[I.2.5.8]{FujiKato}, which we may apply because our purely analytic space is `valuative'. For the universal property, let $q:X\rightarrow K$ be any continuous map into a Hausdorff space. It will suffice to show that $q^{-1}$ takes open sets of $K$ to overconvergent sets of $X$. We may assume $X$ is affine and hence that $K$ is quasi-compact. Let $V\subseteq K$ be open, $U\subseteq q^{-1}V$ quasi-compact. Then $qU\subseteq K$ is compact and hence closed. Therefore the embedded closure of $U$ is contained in $q^{-1}V$. Thus $q^{-1}V$ is overconvergent.
\end{proof}


\section{From rigid spaces to affine manifolds}\label{AFF}

In this section we will be interested in normal rigid analytic spaces locally of finite type over a valuation $\F_1$-field $K=\F_1(\!(t^{-H})\!)$, with ring of integers $\sh O_K=\F_1[\![t^{-H}]\!]$, for $H\subseteq\R$ an additive subgroup of the reals.



\subsection{Convergence polyhedron}

In the opening sentences \S\ref{POLY}, we made some intuitive remarks about strongly convex polyhedra defined by inequalities over $H$ inside an $H$-affine space $N$. These assemble to form a category $\mathbf{Poly}^N_H$ of embedded polyhedra, with morphisms affine maps of the ambient affine spaces that preserve the polyhedra. Our convention will be that $\emptyset$ is \emph{not} a polyhedron.

An object $\Delta$ of $\mathbf{Poly}_H^N$ is determined by the pair
\[ \Aff^+_\Delta(N,H)\subseteq\Aff_\Delta(N,H), \]
which itself determines a Banach $K$-algebra
\[ \F_1(\!(t^{-H})\!)\rightarrow \sh O\{\Delta\}=\left( \F_1\left\{z^{\Aff_\Delta(N,H)}\right\}; \F_1\left\{z^{\Aff^+_\Delta(N,H)}\right\}\right) \]
defined, as usual, by writing $\Aff^+_\Delta(N,H)\subseteq\Aff_\Delta(N,H)$ multiplicatively, adjoining $0$, and equipping it with the $t$-adic topology. It is automatically normal (since $\Aff_\Delta$ is saturated) and of finite type over $K$.

This construction is natural in $\Delta$ and hence determines a fully faithful functor
\[X:\mathbf{Poly}^N_H\rightarrow\mathbf{Rig}_{\F_1(\!(t^{-H})\!)}^\mathrm{aff/tf/n/nb},\quad \Delta\mapsto X_\Delta \]
into the category of affine and normal rigid analytic spaces of finite type over $\F_1(\!(t^{-H})\!)$ with non-boundary morphisms (that is, whose dual $\F_1$-algebra homomorphisms have no kernel).

\

Conversely, a finitely presented, quasi-integral Banach $K$-algebra $A$ gives rise to a finite-dimensional affine space
\[ N_{A/H}(-)=\Hom_H(K_A^\times,-), \]
which we consider as a functor on rank one extensions of $H$. Adapting our previous practice, write $\log f$ for the affine function on $N$ determined by $f\in K_A^\times$. Then
\[ \Delta_{A/H}=(\log f\leq 0|f\in A^+\setminus 0)\subseteq N_{A/H} \]
is a strongly convex polyhedron, the \emph{convergence polyhedron} of $A$ (cf. \cite[3.1.4]{Kap}). It depends only on the relative normalisation of $(A;A^+)$, and hence descends to a functor
\[ \Delta:\mathbf{Rig}_{\F_1(\!(t^{-H})\!)}^\mathrm{aff/tf/qi/nb}\rightarrow\mathbf{Poly}^N_H,\quad \Spec A\mapsto \Delta_{A/H} \]
right adjoint to $X$.

\begin{lemma}The adjunction $\mathbf{Poly}^N_H\leftrightarrows\mathbf{Rig}_{\F_1(\!(t^{-H})\!)}^\mathrm{aff/tf/qi/nb}$ restricts to an equivalence on the full subcategory of $\mathbf{Rig}_{\F_1(\!(t^{-H})\!)}^\mathrm{aff/tf/qi/nb}$ whose objects are normal with no torsion in $K^\times$.\end{lemma}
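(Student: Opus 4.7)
Since $X$ is fully faithful (as noted just before the lemma), it suffices to identify its essential image. My plan has two parts: (a) verify every $X_\Delta$ is normal and has no $K^\times$-torsion, and (b) for $(A;A^+)$ normal with no $K^\times$-torsion, show the unit $\eta : A \to \sh O\{\Delta_{A/H}\}$ of the adjunction is an isomorphism.

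For (a), $\Aff^+_\Delta(N,H)$ is saturated inside $\Aff_\Delta(N,H)$---if $nf \leq 0$ on $\Delta$ then $f \leq 0$ on $\Delta$---which is exactly relative normality of $X_\Delta$. The units of $\sh O\{\Delta\}$ are the $z^f$ with $f$ bounded above and below on $\Delta$; under the differential $\Aff(N,H)\to\Lambda_{N/H}$ their images lie in a subgroup of a lattice, and the kernel is exactly $H=K^\times$, giving torsion-freeness of $K_{X_\Delta}^\times/K^\times$.

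For (b), the no-torsion hypothesis forces $K_A^\times/H$ to be torsion-free, and together with finite presentation it is a lattice, so $N_{A/H}$ is a genuine $H$-affine space of finite dimension with $K_A^\times = \Aff(N_{A/H},H)$. A finite list of $\sh O_K$-generators of $A^+$ cuts $\Delta_{A/H}$ out of $N_{A/H}$ by finitely many inequalities, so it is an $H$-rational polyhedron. The map $\eta$ is then well-defined and, by construction of $\Delta_{A/H}$, sends $A^+ \setminus 0$ into $\Aff^+_{\Delta_{A/H}}$; its injectivity follows from the fact that $A \setminus 0 \hookrightarrow K_A^\times = \Aff(N_{A/H},H)$.

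The main obstacle---and the essential content of the lemma---is surjectivity of $\eta$. Given a nonzero $\phi \in \Aff_{\Delta_{A/H}}$, I would first shift by $\sup_\Delta \phi \in H$ to reduce to $\phi \in \Aff^+_{\Delta_{A/H}}$. A Farkas-style duality inside $\Aff(N_{A/H},H)$ then identifies $\Aff^+_{\Delta_{A/H}}$ as the saturation of the submonoid generated by $H^\circ$ together with the $\log g$ for $g$ in a chosen generating set of $A^+$. Writing $\phi = \log f$ with $f \in K_A^\times$, this yields an identity $f^n = t^h \prod g_i^{n_i}$ with $h \in H^\circ$ and $g_i \in A^+$, whence $f^n \in A^+$. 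The Tate structure of $A$ combined with relative normality of $A^+$ in $A$ then upgrades $f^n \in A^+$ to $f \in A^+$, completing surjectivity---and with it the lemma.
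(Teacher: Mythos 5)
The paper states this lemma without proof, so there is no paper argument to compare against; I assess the proposal on its own terms. Your approach -- identify the essential image of $X$ by checking normality and torsion-freeness of $X_\Delta$, then show the counit $\eta:A\to\sh O\{\Delta_{A/H}\}$ is an isomorphism via a Farkas-type argument -- is the natural and, I believe, intended route, and it does essentially work. Two places want care.

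First, the reduction step ``shift by $\sup_\Delta\phi\in H$'' is slightly wrong as stated. Since $\Aff(N,H)$ only constrains \emph{slopes} to be integral, the vertices of an $H$-rational polyhedron (and hence the supremum of $\phi$ on $\Delta$) may land in $\Q H\setminus H$. For instance, with $H=\Z$ and $\Delta=\{x:2x\le 1,\ 1\le 2x\}=\{1/2\}$, the function $\phi=x$ has $\sup_\Delta\phi=1/2\notin\Z$. What you actually need is that $\Aff_\Delta=\Aff^+_\Delta+H$, i.e.\ that for every $\phi\in\Aff_\Delta$ there is \emph{some} $h\in H$ with $\phi-h\in\Aff^+_\Delta$. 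This is true -- pick any $h\in H$ with $h\ge\sup_\Delta\phi$, which exists because a nonzero subgroup of $\R$ is unbounded above -- and is in effect the exhaustiveness of the $t$-adic filtration noted in \S\ref{POLY_NONPOS}. So the reduction goes through, but it is the exhaustiveness of the filtration, not the exactness of the supremum, that does the work.

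Second, the final sentence attributes the passage from $f^n\in A^+$ to $f\in A^+$ to ``the Tate structure of $A$ combined with relative normality.'' That is not quite enough: relative normality (saturation of $A^+$ in $A$) only applies once you know $f\in A$, and the Farkas argument hands you $f$ only as an element of $K_A^\times=\Aff(N_{A/H},H)$. The Tate structure $A=A^+[S^{-1}]$ does not by itself put $f$ in $A$. The missing ingredient is precisely \emph{normality of $A$} (saturation of $A$ in $K_A$), which is the restriction hypothesis of the lemma: it gives $f\in A$ from $f^n\in A^+\subseteq A$, and relative normality then upgrades to $f\in A^+$. Since normality is available, this is a mislabeling rather than a real gap, but the role of the normality hypothesis should be made explicit -- it is exactly the point of the restriction in the lemma, and your proof as written does not visibly use it. Finally, a small loose end: you implicitly assume $\Delta_{A/H}\ne\emptyset$ (the paper's convention excludes the empty polyhedron); this is asserted by the paper when it declares $\Delta_{A/H}$ a polyhedron, so it is reasonable to take as input, but if you want a self-contained argument you should say where nonemptiness comes from.
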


\begin{remark}\label{torsion}We can fix the issue with torsion in $K^\times$ by considering our polyhedra equipped with an Abelian group $K^\times$ that pairs with $N$; to avoid unnecessary complication, I will instead make the hypothesis that $K^\times$ is torsion-free a standing hypothesis from now on.\end{remark}

\begin{eg}[Field extensions]\label{AFF_EXT}If $H^\prime$ is a degree $n$ extension of $H$, then $\Delta_{H^\prime/H}$ can be realised as a $0$-dimensional polyhedron inside the $n$-dimensional $H$-affine space $H^\prime$ whose vertex is a generator of $H^\prime$ over $H$. \end{eg}

\paragraph{Open sets}
In a completed localisation of Banach $K$-algebras (cf. \S\ref{RIG_AFFINE}), we are allowed to invert some elements $s\in A\setminus 0$ and then enlarge $A^+$ by throwing in some elements of the form $t/s\in A[s^{-1}]$. This corresponds to passing to the \emph{sub-polyhedron} of $\Delta_{A/H}$ defined by the inequalities $\log t\leq \log s$. 

Thus affine open subsets of $\Spec A$ correspond to sub-polyhedra of $\Delta_{A/H}$.

\paragraph{Points}
If $H^\prime$ is an extension of $H$, then a non-boundary $\F_1(\!(t^{-H^\prime})\!)$-point of $\Spec A$ over $K$ is a commuting diagram
\[\xymatrix{ 
H^\circ\ar[r]\ar[d] & \Aff^+_{\Delta_{A/H}}(N_{A/H},H)\ar[r]\ar[d] & (H^\prime)^\circ \ar[d] \\
H\ar[r] & \Aff_{\Delta_{A/H}}(N_{A/H},H) \ar[r] & H^\prime }\]
where the top and bottom horizontal compositions are the structural map $H\subseteq H^\prime$; in other words, it is an element of $\Delta_{A/H}(H^\prime)$.

These points can actually be realised as morphisms of polyhedra; see example \ref{AFF_EXT}.

\paragraph{Convergence region}
The passage from $\Delta_{A/H}$ to $N_{A/H}$ forgets the topology (and ring of integers) of $A$: there is a Cartesian square
\[\xymatrix{
\Delta_{A/H}(-) \ar[r]\ar[d] & \Hom_K\left(A,\F_1(\!(t^{-(-)})\!) \right) \ar[d] \\
N_{A/H}(-) \ar[r] & \Hom_{K^?}\left(A^?,\F_1(\!(t^{-(-)})\!) \right)
}\]
As all functions extend meromorphically over arbitrary expansions of $\Delta_{A/H}$, the term `convergence' here is purely in analogy with the case of analysis over topological fields.

\begin{remark}Over an ordinary non-Archimedean field, any boundaryless rigid analytic space will be either an algebraic field extension or of infinite type. The existence of geometrically interesting boundaryless rigid spaces of finite type over a field is therefore a peculiarity of the $\F_1$-world.\end{remark}

In the boundaryless case, the points calculation simplifies to
\[ \Delta_{A/H}(H^\prime)\cong \Hom_K\left(A,\F_1(\!(t^{-H^\prime})\!) \right) \]
that is, $\Delta_{A/H}$ is, as a functor, simply the restriction of $\Spec A$ to the category of rank one extensions of $H$. Of course, it is possible to give a combinatorial description of the boundary as well (\S\ref{POLY_BOUNDARY}), but this is hardly more straightforward than the definition of $\Spec A$ as a functor on Banach $K$-algebras.

\subsection{Formal models}

Let $X=\Spec A\in\mathbf{Rig}_K^\mathrm{aff/tf/qi/nb}$ and let $X^+$ be a relatively normal formal model of $X$. By definition,
\[ \Hom_K(\Delta_{H^\prime}\rigless\{0\},X)=\Hom_{\sh O_K}(\Delta_{H^\prime},X^+) \]
with $\Delta_H=\Spec\F_1[\![t^{-H^\prime}]\!]$ the formal disc with exponent group $H^\prime\supseteq H$. If $\sh O_{X^+}$ is $t$-torsion-free, it is in particular quasi-integral and so we can define the punctured cone complex $\Sigma_{X^+}$ and its developing map $\Sigma_{X^+}\rightarrow N_{X^+}=\Hom(K_X^\times,-)$. The above identification yields get affine inclusions
\[\xymatrix{ \Delta_{X/H}\ar[d] \ar@{^{(}->}[r] & \Sigma_{X^+} \ar[d] \\
 N_{X/H} \ar@{^{(}->}[r] & N_{X^+}
}\]
as the fibre over the identity of the restriction map $\Hom(K_X^\times,H)\rightarrow\End(H)$. (If $H\not\subseteq\Q$, the objects in the right column may be replaced with their relative variants discussed at the end of \S I.\zref{I_PFAN_CONE}.)

If we pick $X^+=\Spec A^+$ the canonical affine model of $X$, $\Sigma_{X^+}$ will simply be the cone over $\Delta_{X/H}\subseteq\Hom(K_X^\times,-)$, punctured along the kernel of $\Hom(K_X^\times,-)\rightarrow\Hom(H,-)$. In general it will be a finite punctured fan whose support is this cone. 


Intersecting $\Sigma_{X^+}$ with $\Delta_{X/H}$ decomposes it into $H$-rational convex bodies. By compactness, any such decomposition must in fact be into finitely many $H$-rational polyhedra.

\begin{prop}The category of relatively normal models of $X$ is equivalent to the poset of polyhedral decompositions of $\Delta_{X/H}$, ordered by refinement.\end{prop}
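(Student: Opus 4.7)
The plan is to construct explicit inverse functors between the two categories via the cone/slice duality, using the punctured fan classification of $t$-torsion-free relatively normal formal schemes established in Part I. That classification reduces the question to showing that finite punctured fans $\Sigma \subseteq N_{X^+}$ whose support equals the cone over $\Delta_{X/H}$ (punctured along $\ker[\Hom(K_X^\times,-)\to\Hom(H,-)]$) correspond to finite $H$-rational polyhedral decompositions of $\Delta_{X/H}$. Once this bijection is set up, matching morphisms with refinements will be essentially formal.

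For the forward direction, I would send a model $X^+$ to the decomposition obtained by intersecting each maximal cone of $\Sigma_{X^+}$ with the affine slice $N_{X/H} \subseteq N_{X^+}$. By the discussion immediately preceding the proposition, the intersections are $H$-rational polyhedra covering $\Delta_{X/H}$, and they meet face-to-face because $\Sigma_{X^+}$ did. For the inverse direction, given a decomposition $\{\Delta_i\}$ of $\Delta_{X/H}$, I cone each piece: let $\sigma_i$ be the closure in $N_{X^+}$ of the rays from the origin through $\{(1,v) : v \in \Delta_i\}$, which automatically picks up the recession cone of $\Delta_i$ at height zero. The collection $\{\sigma_i\}$ is a fan with support the full cone over $\Delta_{X/H}$; puncturing along the standard locus gives a punctured fan, hence (by the Part I classification) a relatively normal model of $X$. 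The two constructions are visibly mutually inverse on objects.

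For morphisms, the observation is that both sides are \emph{posets}. In the category of relatively normal models of the fixed affine $X$, any morphism over $X$ is an admissible modification and is uniquely determined as such (since it is an isomorphism on a schematically dense open subscheme, so factors through no more than one blow-up centre in $Z$); the same uniqueness holds for refinements of decompositions. Thus it remains only to show that the existence of a morphism corresponds to refinement in either direction. In one direction, a refinement (subdivision) of punctured fans restricts on the affine slice to a refinement of the associated decomposition; in the other, coning a refinement of decompositions produces a subdivision of the corresponding punctured fan.

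The main obstacle I expect is not the coning/slicing combinatorics, which is straightforward, but justifying that every relatively normal model is captured: one must check that any admissible modification of the canonical affine model $\Spec A^+$ is, after taking the punctured fan, a subdivision of the cone on $\Delta_{X/H}$ rather than some exotic fan in $N_{X^+}$ with different support. This is where the relative normality hypothesis is essential (via proposition \ref{RIG_SAT}), since any admissible blow-up of an affine marked formal scheme remains inside the relative normalisation, whose punctured fan is precisely the cone over $\Delta_{X/H}$. One also needs to verify that the slicing operation really does produce a face-to-face decomposition (not just a cover by convex bodies meeting in arbitrary $H$-rational pieces); this follows from the fan axioms for $\Sigma_{X^+}$ together with the fact that the slicing hyperplane $N_{X/H}$ is cut out by the equation ``$t$-height $= 1$'', hence meets each cone in a face of the induced polyhedral structure.
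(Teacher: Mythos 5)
Your proof is correct and follows essentially the same route the paper takes: the paper offers no separate proof of this proposition, instead treating the paragraphs immediately preceding it (identifying $\Sigma_{X^+}$ for a relatively normal model as a finite punctured fan supported on the cone over $\Delta_{X/H}$, and slicing it against $\Delta_{X/H}$) as its justification. You have merely spelled out the coning inverse, the poset-level check, and the role of relative normality in pinning down the support, all of which are the implicit content of the paper's discussion.
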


Although we already observed this through algebra, this gives a geometric proof that:

\begin{cor}The convergence polyhedron functor $\Delta$ sends non-empty open immersions to inclusions of polyhedra.\end{cor}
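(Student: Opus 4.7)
The plan is to deduce the claim directly from the preceding proposition and the affine presentation of open immersions in \S\ref{RIG_AFFINE}. Let $j : U \hookrightarrow X$ be a non-empty affine open immersion; by \S\ref{RIG_AFFINE} it is dual, after relative normalisation, to a completed localisation $A \to A\{T/s\}$ for some finitely generated $T \subseteq A$ with $s \in T$ and $TA = A$.

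First I would take the canonical relatively normal affine model $X^+ = \Spec A^+$ and perform the admissible blow-up along the ideal generated by $T$; the principal open subscheme cut out by $s$, after relative normalisation, is an affine model $U^+$ of $U$. By the preceding proposition, this blow-up corresponds to a polyhedral subdivision of $\Delta_{X/H}$, and $U^+$ -- being an affine toric chart of the blow-up -- identifies via the toric dictionary of \S\ref{POLY_FACE} with a single cell of that subdivision, namely the one cut out from $\Delta_{X/H}$ by the half-space inequalities $\log t \leq \log s$ for $t \in T$.

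Next I would check that this cell is canonically $\Delta_{U/H}$. Since $A$ is normal and quasi-integral, $A$ and $A\{T/s\}$ share a common field of fractions, so $K_X^\times = K_U^\times$ and hence the ambient affine spaces $N_{X/H}$ and $N_{U/H}$ coincide. Under this coincidence, the cell in question is precisely $\Delta_{U/H}$, as one reads off from the pair description of points: an $H^\prime$-point of $\Spec A\{T/s\}$ is a homomorphism of pairs $(A\{T/s\}; A^+\{T/s\}) \to (\F_1(\!(t^{-H^\prime})\!); \F_1[\![t^{-H^\prime}]\!])$, and the presence of $t/s \in A^+\{T/s\}$ forces exactly the stated inequalities on the image.

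The main obstacle, though minor, is essentially a normalisation check: after admissible blow-up the naive principal chart has coordinate algebra $A^+[T/s]$, and to recover the claimed model of $U$ one must replace it by its relative normalisation $A^+\{T/s\}$ inside $A\{T/s\}$. One must verify that this normalisation step does not alter the associated cell of the polyhedral subdivision. This is controlled by Proposition \ref{RIG_SAT}, which identifies normalisation with a $Z$-admissible integral extension that, by construction, is invisible to the underlying Riemann-Zariski combinatorics; granted this, the identification $\Delta_{U/H} \hookrightarrow \Delta_{X/H}$ is an inclusion of polyhedra in $\mathbf{Poly}^N_H$.
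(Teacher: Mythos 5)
Your proof is correct and follows essentially the same path as the paper, which derives the corollary directly from the preceding proposition (that relatively normal models of $X$ are equivalent to polyhedral decompositions of $\Delta_{X/H}$), having already observed the statement ``through algebra'' in the earlier ``Open sets'' paragraph via the completed-localisation description $A \to A\{T/s\}$. You simply spell out the same two ingredients together: realise the open immersion as the $s$-chart of an admissible blow-up along $(T)$, read off the single cell of the corresponding subdivision, and note that normalisation is a $Z$-admissible integral extension and so invisible to the combinatorics.
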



\subsection{Collages}\label{AFF_COLL}

\paragraph{Glueing}The functor $\mathbf{Rig}_K^\mathrm{aff/tf/qi/nb}\rightarrow\mathbf{Poly}^N_H$ is left exact and creates limits, and therefore flat. Unlike previously, however, we now have non-trivial coverings in $\mathbf{Rig}_K^\mathrm{aff/tf/qi/nb}$, so we will need to introduce some compatibility in order to globalise our constructions.

Fortunately, it is possible to understand these coverings purely in terms of the points valued in the maximal totally ramified extension $K^\mathrm{ram}=\F_1(\!(t^{-\Q H})\!)$ of $K$ (here $\Q H\subseteq \R$ denotes the divisible hull of $H$).

\begin{lemma}\label{AFF_COVERING}A finite family of affine open subsets $U_i\subseteq X$ is a covering if and only if $X(K^\mathrm{ram})=\bigcup_i U_i(K^\mathrm{ram})$.\end{lemma}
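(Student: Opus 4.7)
The plan is to translate both sides of the biconditional into combinatorial statements about the sub-polyhedra $\Delta_i := \Delta_{U_i/H}$ of $\Delta := \Delta_{X/H}$ that correspond to the affine opens $U_i$ under the preceding \emph{Open sets} paragraph, and then to reduce to elementary convex-geometric density.

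First I would unwind the right-hand side using the points calculation of the previous subsection: since $K^\mathrm{ram} = \F_1(\!(t^{-\Q H})\!)$, non-boundary $K^\mathrm{ram}$-points of an affine rigid space over $K$ coincide with the $\Q H$-rational points of its convergence polyhedron. This gives $X(K^\mathrm{ram}) = \Delta(\Q H)$ and $U_i(K^\mathrm{ram}) = \Delta_i(\Q H)$, so the hypothesis reads simply $\Delta(\Q H) = \bigcup_i \Delta_i(\Q H)$.

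Next I would translate the left-hand side to a set-theoretic condition on the $\Delta_i$. By the Raynaud definition, $\{U_i\}$ is a covering iff there is a relatively normal model $X^+$ of $X$ and models $U_i^+ \hookrightarrow X^+$ that are open subschemes covering $X^+$. By the preceding proposition, relatively normal models of $X$ correspond bijectively to polyhedral decompositions of $\Delta$, and a sub-polyhedron is the polyhedron of an open subscheme of the model iff it is a union of cells. The common refinement generated by the face lattices of the $\Delta_i$ always produces a decomposition in which each $\Delta_i$ is a union of cells, so the covering condition reduces to every cell being contained in some $\Delta_i$, i.e.\ to the set-theoretic equality $\Delta = \bigcup_i \Delta_i$.

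It then remains to prove the combinatorial equivalence $\Delta = \bigcup_i \Delta_i \Leftrightarrow \Delta(\Q H) = \bigcup_i \Delta_i(\Q H)$. The forward direction is immediate from $\Delta_i \cap \Delta(\Q H) = \Delta_i(\Q H)$. For the converse, $\bigcup_i \Delta_i$ is a finite union of closed sub-polyhedra, hence closed in $\Delta$; and since the vertices and extreme rays of an $H$-rational polyhedron have coordinates in $\Q H = \Q \otimes_\Z H$, the subset $\Delta(\Q H)$ is dense in $\Delta$, so any closed subset containing it must equal $\Delta$. The step requiring the most care is the translation between the Raynaud covering condition and the set-theoretic union of polyhedra; the density statement and the points identification are essentially book-keeping from earlier results.
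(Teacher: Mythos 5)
Your proof is correct and takes essentially the same route as the paper: both pass to a model $X^+$ on which each $U_i$ is an open subscheme, rephrase the covering condition as a combinatorial union of cells (you say polyhedra $\Delta_i$, the paper says cones of the subfans $\Sigma_{U_i^+}$, which is the same up to taking cones), and close with the observation that $\Q H$-rational points detect equality of $H$-rational polyhedral unions. You spell out the density step a bit more explicitly than the paper's terse ``this is detected by the rational points,'' but the content is identical, including the implicit reliance on finiteness of the family (to form the common refinement), which the paper flags in its parenthetical remark.
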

\begin{proof}We may find a model of $X^+$ of $X$ on which each $U_i\hookrightarrow X$ is realised as an open immersion. The covering condition then becomes that $\Sigma_{X^+}$ is a union of cones in the subfans $\Sigma_{U_i^+}$; this is detected by the rational points $\Sigma_-(\Q H)$. (This would be false if we allowed an infinite family of $U_i$.)\end{proof}

Defining coverings on $\mathbf{Poly}^N_H$ to be those finite families of sub-polyhedra that induce a surjection on $\Q H$-rational points, we obtain a sheaf topos $\mathrm{Sh}\mathbf{Poly}^N_H$ and full subcategory $\mathrm{C}\mathbf{Poly}^N_H$ of locally representable objects.

\begin{defn}An object of $\mathrm{C}\mathbf{Poly}^N_H$ is called a \emph{collage in embedded $H$-rational polyhedra}, or simply \emph{collage} if the constituent objects are understood.\end{defn}

\begin{prop}The convergence polyhedron functor $\Delta$ extends to the pullback along a geometric morphism
\[ \Sh\mathbf{Poly}^N_H \rightarrow \Sh\mathbf{Rig}^\mathrm{ltf/qi/nb}_{\F_1(\!(t^{-H})\!)} \]
which preserves open immersions and induces bijections on open subset lattices.

It has a fully faithful left adjoint with image the full subcategory generated under colimits by the normal analytic spaces.\end{prop}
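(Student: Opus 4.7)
The plan is to interpret the proposition as the lift of the site-level adjunction $X \dashv \Delta$ (and the consequent fully-faithfulness of $X$) to the topos level, using the standard theory of geometric morphisms induced by flat continuous functors of sites.

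First I would verify that $\Delta$ is a morphism of sites: flatness has already been noted (since $\Delta$ creates limits), and continuity (preservation of covers) is lemma~\ref{AFF_COVERING} together with the definition of a cover in $\mathbf{Poly}^N_H$---both say a finite family is a cover if and only if it is surjective on $K^{\mathrm{ram}}$-points. This yields the geometric morphism $g\colon \Sh\mathbf{Poly}^N_H \to \Sh\mathbf{Rig}^\mathrm{aff/tf/qi/nb}_K$ with $g^*(h_{\Spec A}) = h_{\Delta_{A/H}}$, and the extension to $\Sh\mathbf{Rig}^\mathrm{ltf/qi/nb}_K$ is formal since the affine site generates that topos. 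Preservation of open immersions and the bijection on open-subset lattices reduce, on each affine $\Spec A$, to the correspondence between affine open subsets and sub-polyhedra of $\Delta_{A/H}$ recorded in the ``Open sets'' discussion, with cover compatibility being lemma~\ref{AFF_COVERING} once more; gluing affine pieces matches gluing sub-polyhedra into a collage.

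For the fully faithful left adjoint, the crucial observation is that the presheaf-level adjunction $X \dashv \Delta$ induces a natural identification $\tilde X^* \simeq \tilde\Delta_!$: both send a representable $h_Y$ to $h_{\Delta Y}$ via the hom-bijection $\Hom(Xc, Y) = \Hom(c, \Delta Y)$. Continuity of $X$ (again via ``Open sets'') shows $\tilde X^*$ preserves sheaves, so at the sheaf level $g^* = \tilde X^*|_\Sh$; its left adjoint, the left Kan extension $\tilde X_!$ along $X$, sheafifies to $L := a \circ \tilde X_! \circ i \colon \Sh\mathbf{Poly}^N_H \to \Sh\mathbf{Rig}^\mathrm{ltf/qi/nb}_K$ and satisfies $L \dashv g^*$ with $L(h_\Delta) = h_{X_\Delta}$ on representables. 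The unit $\iden \to g^* L$ is then the Yoneda image of $\Delta \to \Delta X_\Delta$ at $h_\Delta$, an isomorphism because $X$ is fully faithful; cocontinuity of both sides carries this to all sheaves. Thus $L$ is fully faithful. Its image, closed under colimits and containing every $h_{X_\Delta}$ (the representables of normal affine $K$-rigid spaces), coincides with the colimit closure of the representables of all normal locally-of-finite-type rigid spaces, since each such admits an affine open cover by normal affines.

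The main technical care will be bookkeeping the sheafification when lifting the presheaf identification $\tilde X^* = \tilde\Delta_!$ to the sheaf-level assertion $g^* = \tilde X^*|_\Sh$ (ensuring the $L\dashv g^*$ adjunction really descends without losing the fully-faithful unit), and in the second half of the open-subset statement, verifying the bijection globally---across affine overlaps---rather than purely on affine charts.
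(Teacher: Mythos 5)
Your approach is correct and is exactly the site-theoretic argument the paper sets up (the proposition is stated without an explicit proof, but the preceding flatness remark, Lemma~\ref{AFF_COVERING}, and the definition of the topology on $\mathbf{Poly}^N_H$ are precisely the inputs you use). In particular the key identification $\tilde\Delta_! \simeq \tilde X^*$ coming from the adjunction $X \dashv \Delta$, which upgrades the geometric morphism to an essential one and makes the fully-faithfulness of the left adjoint follow from that of $X$ via the unit isomorphism on representables, is the intended mechanism.
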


\begin{cor}\label{AFF_COLL_THM}The convergence region functor and its left adjoint restrict to an equivalence
\[ \Delta:\mathbf{Rig}_{\F_1(\!(t^{-H})\!)}^\mathrm{ltf/n/nb}\widetilde\rightarrow\mathrm{C}\mathbf{Poly}^N_H \]
between the category of normal analytic spaces and the category of collages.

A family of open subsets $U_i\subseteq X$ is a covering if and only if on every polyhedron $\Delta$ of $\Delta_{X/H}$ there is a finite refinement such that $\Delta(\Q H)=\bigcup_i\Delta\cap\Delta_{U_i/H}(\Q H)$.\end{cor}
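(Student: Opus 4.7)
The plan is to bootstrap from the adjunction of the preceding proposition, which already provides a geometric morphism $\Sh\mathbf{Poly}^N_H \to \Sh\mathbf{Rig}^{\mathrm{ltf/qi/nb}}_{\F_1(\!(t^{-H})\!)}$ whose left adjoint is fully faithful with essential image the colimit-closure of normal analytic spaces. So the work consists in: (a) identifying \emph{locally representable} objects on the two sides under this equivalence, and (b) translating the existing covering criterion of Lemma~\ref{AFF_COVERING} into the polyhedron-theoretic statement about $\Q H$-rational points.

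For (a), I would first record the affine case: by the lemma preceding Example~\ref{AFF_EXT} (with the standing torsion-freeness convention of Remark~\ref{torsion}), $\Delta$ and $X$ are mutually inverse equivalences between $\mathbf{Poly}^N_H$ and the full subcategory of $\mathbf{Rig}_{\F_1(\!(t^{-H})\!)}^{\mathrm{aff/tf/qi/nb}}$ consisting of normal objects. Now a locally representable sheaf on $\mathbf{Poly}^N_H$ is by definition glued from representables, i.e.\ from affine normal rigid spaces under the equivalence. Conversely, an object of $\mathbf{Rig}^{\mathrm{ltf/n/nb}}_{\F_1(\!(t^{-H})\!)}$ is by definition locally affine and normal, hence locally lies in the essential image of $X$. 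Since the left adjoint of the proposition is fully faithful and is pullback along a geometric morphism (so commutes with the glueing that defines both categories of locally representable objects), the affine equivalence extends to an equivalence of locally representable objects. The bijection of open subset lattices asserted in the proposition guarantees that open immersions match up on both sides, so the notion of being locally representable is preserved in both directions.

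For (b), apply Lemma~\ref{AFF_COVERING} locally on an affine model. A family $\{U_i \subseteq X\}$ covers an affine open $\Spec A\subseteq X$ iff it covers the $K^\mathrm{ram}$-points, i.e.\ the $\Q H$-rational points of $\Delta_{A/H}$. Globally, after choosing a formal model and intersecting with $\Delta_{X/H}$ (using the polyhedral decomposition furnished by models, as in the preceding proposition), the covering condition splits polyhedron by polyhedron: on each $\Delta$ in $\Delta_{X/H}$, some finite refinement must satisfy $\Delta(\Q H) = \bigcup_i \Delta \cap \Delta_{U_i/H}(\Q H)$. Conversely, given such refinements, the $K^\mathrm{ram}$-point criterion is satisfied on each polyhedron and hence globally.

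The main obstacle I expect is (a): verifying that the essential image of locally representable collages under the left adjoint really consists of \emph{normal} analytic spaces locally of finite type (and not a larger subcategory of $\mathbf{Rig}^{\mathrm{ltf/qi/nb}}$), and that conversely every such rigid space has its convergence polyhedron sheaf actually locally representable by an embedded polyhedron rather than by some more exotic pro-object. Both hinge on the fact, established in the proof of the proposition, that $\Delta$ sends open immersions to open immersions of polyhedra with matching open-subset lattices, together with the affine equivalence; the finiteness implicit in ``locally of finite type'' forces the local sections of the collage to be genuine finite-dimensional embedded polyhedra rather than unions or pro-limits thereof.
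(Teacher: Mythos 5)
Your proposal is essentially correct and follows the route the paper implicitly intends (the corollary is stated without proof, as an immediate consequence of the preceding proposition together with the affine lemma and Lemma~\ref{AFF_COVERING}). The skeleton you give---affine equivalence, then transport along the fully faithful left adjoint and match locally representable objects using the bijection on open-subset lattices, then localise the covering criterion polyhedron by polyhedron---is exactly what one would write out.

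One small slip worth flagging: you justify the left adjoint's commuting with glueing by saying it ``is pullback along a geometric morphism.'' It is not; the pullback $f^*$ is the convergence polyhedron functor $\Delta$, while the fully faithful functor going the other way is the \emph{further} left adjoint $f_!$ (the morphism is essential). The conclusion you want --- preservation of colimits --- holds anyway, simply because $f_!$ is a left adjoint, but the attribution should be corrected. With that fixed, the argument closes: $f_!$ preserves colimits and is fully faithful; the proposition's bijection on open-subset lattices (for objects in the image) ensures that an open cover of a collage $\mathcal{F}$ by representable polyhedra transports to an open cover of $f_!\mathcal{F}$ by affine normal rigid spaces, and conversely. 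Hence $f_!$ restricts to an equivalence between collages and normal spaces locally of finite type, and the covering criterion reduces on each affine/polyhedron to Lemma~\ref{AFF_COVERING} applied after passing to a finite (affine) refinement, which exists by quasi-compactness of polyhedra.
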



\begin{eg}[Affine space]\label{AFF_AFFINE_SPACE}An $H$-affine space $N$ can be considered as a collage
\[ N(\Delta)=\Hom(\Delta(H_\infty),N(H)) \]
(which is empty unless $\Delta(H_\infty)=\Delta(H)$ is bounded). Given a strongly convex cone $\sigma$ in $\Lambda_{N/H}$, one can also define a partial compactification by allowing morphisms from infinite polyhedra whose recession cone is contained in $\sigma$.\end{eg}

\paragraph{Developing map} 
A collage in bounded polyhedra comes equipped with a locally (i.e. on each polyhedron) defined \emph{developing morphism}
\[ \delta:\Delta\rightarrow N \]
which extends globally on any universal cover. A collage in possibly unbouded polyhedra still has local developing morphisms into varying partial compactifications of $N$.

The topological realisation $\Delta(\R)$ of a locally compact collage $\Delta$ comes equipped with a local system of real affine spaces $N(\R)$ with $H$-structures and a canonical section \[ \Delta(\R) \rightarrow N(\R) \]
rendering $N(\R)$ a real vector bundle. In nice cases (cf. prop. \ref{AFF_SUR}), the developing map $\delta:\Delta\rightarrow N_p$ at $p$ will be defined on a Euclidean open neighbourhood of $p$, but this fails in general.

If $X$ has a model $X^+$, then the developing map is obtained as the fibre over $1_{\End(H)}$ of the developing map associated to the punctured cone complexes $\Sigma_{X^+}(\R)\rightarrow\Hom_\Z(K_X^\times,\R)$. More generally, $X$ admits models quasi-compact-locally, and so $\Delta_X(\R)\rightarrow N(\R)$ is a topological filtered colimit of sections of punctured cone complexes.

\subsection{Overconvergent topology of polyhedra}\label{AFF_OVER}
The complex $\Delta_{X/H}(\R_\infty)$ gives a neat description of the \emph{overconvergent topology} of $X$: the natural map $c:\Delta_{X/H}(\R_\infty)\rightarrow X$ induces a pullback on open sets of $X$
\[ c^{-1}:\sh U_{/X}\rightarrow \sh P(\Delta_{X/H}(\R_\infty)):=\{\text{subsets of }\Delta_{X/H}(\R_\infty)\},\quad U\mapsto U(\R_\infty) \]
 that matches the overconvergent sets one-to-one with the Euclidean open subsets.

\begin{thm}[Points of the overconvergent topos]\label{AFF_OVERTHM}Let $X$ be a quasi-integral rigid space, locally of finite type over $\F_1(\!(t^{-H})\!)$. The composite of the natural map $c:\Delta_{X/H}(\R_\infty)\rightarrow X$ with the separation map $b$ induces a homeomorphism between $\Delta_{X/H}(\R_\infty)$ and $X^\mathrm{sur}$.\end{thm}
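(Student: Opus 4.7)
The plan is to exploit the assertion made immediately before the theorem — that $c^{-1}$ identifies overconvergent opens of $X$ one-to-one with Euclidean opens of $\Delta_{X/H}(\R_\infty)$. Granting that, $b\circ c$ will induce an isomorphism on open-set lattices, and the whole problem reduces to checking that it is bijective on underlying points, which will follow from compactness and Hausdorffness.

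\emph{Reduction to the affine case.} Both formations $X\mapsto\Delta_{X/H}(\R_\infty)$ (glued via corollary \ref{AFF_COLL_THM}) and $X\mapsto X^{\mathrm{sur}}$ (since $\mathbf{Rig}^{\mathrm{sur}}_-$ is a stack on $\mathbf{Rig}$, as in \S\ref{SEP_TOP}) are local on $X$, so it suffices to treat $X=\Spec A$ with $A$ a normal Banach algebra of finite type over $K$. Then $\Delta_X$ is a single polyhedron and $\Delta_X(\R_\infty)$ is the compact Hausdorff space of \S\ref{POLY_BOUNDARY}. I will also want $X^{\mathrm{sur}}$ to be compact Hausdorff, which follows from theorem \ref{SEP_SUR} once one observes that $X$ is quasi-integral, quasi-separated, and purely analytic (the latter because on $\Spec A^+$ all algebraic subschemes lie in the zero locus of $t$, which is part of the marking).

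\emph{Open-set correspondence.} Affine opens $\Spec A\{T/s\}\subseteq\Spec A$ correspond to sub-polyhedra $\Delta_U\subseteq\Delta_X$. The technical heart of the theorem is the identification, inside $\Delta_X(\R_\infty)$, of the real points of the formally embedded closure $\mathrm{cl}(U/X)$ from \S\ref{SEP_CLOSURE} with the Euclidean closure of $\Delta_U(\R_\infty)$. I would compute $\mathrm{cl}(U/X)$ on a formal model that subdivides $\Delta_X$ along the facets defining $\Delta_U$, and then translate via the dictionary between prime ideals of $\sh O^+\{\Delta\}$ and faces (\S\ref{POLY_FACE}) together with the asymptotic-cone description of the boundary at infinity (\S\ref{POLY_BOUNDARY}). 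Combined with corollary \ref{SEP_SUR_OPEN}, which characterises overconvergent opens as those containing $\mathrm{cl}(W/X)$ for every quasi-compact sub-open $W$, this yields the claimed lattice bijection.

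\emph{Bijectivity and conclusion.} The map $b\circ c$ is then continuous and open. Hausdorffness of the Euclidean topology gives injectivity: distinct $p,q\in\Delta_X(\R_\infty)$ are separated by disjoint Euclidean opens, which under the correspondence descend to disjoint overconvergent opens separating $b(c(p))$ from $b(c(q))$ in $X^{\mathrm{sur}}$. Compactness plus density gives surjectivity: the image is compact hence closed in the Hausdorff space $X^{\mathrm{sur}}$, and the lattice bijection forces every nonempty open of $X^{\mathrm{sur}}$ to meet the image. A continuous bijection between compact Hausdorff spaces is a homeomorphism, which completes the affine case and hence, via the reduction, the general case. The hard part will be the combinatorial identification of the formally embedded closure with the Euclidean closure on real points; the rest is elementary topology, but that identification requires carefully tracking $\mathrm{cl}$ through the admissible modifications of a formal model and matching the resulting scheme-theoretic closed subscheme with the sub-polyhedron carved out at infinity by the asymptotic cones of the functions $T/s$ adjoined to $A$.
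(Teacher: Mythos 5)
Your proposal is correct and follows the same skeleton as the paper's proof; the differences are minor rephrasings rather than a genuinely different route, so let me record them for the sake of comparison.

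The heart of the argument in both cases is the frame-level identification: $c^{-1}$ and its right adjoint $c_*$ restrict to an isomorphism between the lattice $\sh U^{\mathrm{sur}}_{/X}$ of overconvergent opens and the Euclidean opens of $\Delta_X(\R_\infty)$. You propose to reach this by computing $\mathrm{cl}(U/X)(\R_\infty)$ and matching it with the Euclidean closure of $\Delta_U(\R_\infty)$, then invoking corollary \ref{SEP_SUR_OPEN}. The paper instead states the dual fact directly — proposition \ref{AFF_MODIFICATION}, that $V$ is an overconvergent neighbourhood of $U$ precisely when $\Delta_V(\R_\infty)$ is a Euclidean neighbourhood of $\Delta_U(\R_\infty)$, imported from the combinatorics of part I — and combines it with the same corollary. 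These are two ways of saying the same thing, and the ``hard part'' you defer is exactly the content the paper outsources to I.\zref{I_PFAN_MODIFICATION}, so neither exposition carries it out in situ. The other divergence is in how the frame isomorphism is converted into a homeomorphism: the paper does this implicitly, since $X^\mathrm{sur}$ is by construction the sober space determined by $\sh U^{\mathrm{sur}}_{/X}$ and $\Delta_X(\R_\infty)$ is Hausdorff hence sober, so a frame isomorphism automatically yields a homeomorphism; you instead run an explicit point-set argument (continuous, open, bijective, compact Hausdorff). Yours is more elementary but costs you the preliminary verification that $X^\mathrm{sur}$ is compact Hausdorff in the affine case (Theorem \ref{SEP_SUR}, requiring purely analytic and quasi-separated), which the paper's route never has to touch — indeed the paper's argument works without any appeal to Hausdorffness of the target at all. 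The reduction to the affine case and the re-globalisation via the adjunction are also handled the same way, just in opposite order of presentation.
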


The proof of this statement occupies the rest of this section.

The inverse to our pullback $c^{-1}$ will come from its right adjoint
\[ c_*:\sh P(\Delta_X(\R_\infty))\rightarrow \sh U_{/X} \quad c_*S=\bigcup_{\Delta^\prime(\R)\subseteq S}\Delta^\prime \]
which takes a subset $S\subseteq \Delta_X(\R_\infty)$ to the colimit of all $H$-rational polyhedra whose topological realisation it contains. 

Since every Euclidean open set is a union of rational polyhedra:

\begin{lemma}Let $S\subset\Delta_X(\R_\infty)$ be Euclidean open; then $c^{-1}c_*S=S$.\end{lemma}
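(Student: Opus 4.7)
The proof is essentially a routine unwinding. By definition $c^{-1}(c_*S) = (c_*S)(\R_\infty)$, and since the $\R_\infty$-points functor commutes with the colimits defining $c_*S$ in the collage $X$ (as recalled in \S\ref{AFF_COLL}),
\[ c^{-1}(c_*S) \;=\; \bigcup_{\Delta'\,:\,\Delta'(\R)\subseteq S} \Delta'(\R_\infty), \]
the union ranging over rational sub-polyhedra $\Delta'$ of $X$.

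For the inclusion $S \subseteq c^{-1}(c_*S)$, I would invoke the fact flagged in the sentence preceding the lemma: every Euclidean open subset of $\Delta_X(\R_\infty)$ admits a presentation $S = \bigcup_\alpha \Delta_\alpha(\R_\infty)$ as a union of $\R_\infty$-realisations of rational polyhedra $\Delta_\alpha \hookrightarrow X$. For each index $\alpha$ one has $\Delta_\alpha(\R) \subseteq \Delta_\alpha(\R_\infty) \subseteq S$, so $\Delta_\alpha$ participates in the defining union of $c_*S$, and hence $\Delta_\alpha(\R_\infty) \subseteq c^{-1}(c_*S)$. Taking the union over $\alpha$ gives the desired inclusion.

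For the reverse inclusion one must check that $\Delta'(\R_\infty) \subseteq S$ whenever $\Delta'(\R) \subseteq S$; this is automatic for bounded $\Delta'$ since then $\Delta'(\R) = \Delta'(\R_\infty)$, and in general one refines any unbounded contribution by bounded sub-polyhedra, each still having $\R_\infty$-realisation inside $S$ by the same basis fact, so the union in $c_*S$ may equivalently be restricted to bounded polyhedra. Equivalently, $c_*$ identifies with the right adjoint to $c^{-1}$ on Euclidean opens, making this inclusion tautological. The only genuinely non-formal step — and the main obstacle — is the basis claim itself: that rational bounded polyhedra form a basis for the Euclidean topology on $\Delta_X(\R_\infty)$, including at boundary-at-infinity points. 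Granting the stratified description of $\Delta(\R_\infty)$ via asymptotic cones from \S\ref{POLY_BOUNDARY}, this follows stratum by stratum, and the Euclidean openness of $S$ enters at precisely this point.
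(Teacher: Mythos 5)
Your forward inclusion is exactly the paper's argument — indeed it is the paper's entire proof, which consists of the single sentence ``since every Euclidean open set is a union of rational polyhedra'' (i.e.\ of $\R_\infty$-realisations of rational sub-polyhedra). Nothing more is said in the paper, and that part of your write-up is fine.

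Your treatment of the reverse inclusion is where the proposal goes wrong. You correctly sense a potential gap — that an unbounded $\Delta'$ might satisfy $\Delta'(\R)\subseteq S$ while $\Delta'(\R_\infty)\not\subseteq S$ — but the fix you offer does not close it. Replacing an unbounded $\Delta'$ by bounded sub-polyhedra does not shrink the object $c_*S\in\sh U_{/X}$: the union defining $c_*S$ already contains the open set of $X$ corresponding to the full $\Delta'$, and pulling that back under $c^{-1}$ still yields all of $\Delta'(\R_\infty)$, including its strata at infinity. You cannot ``refine away'' a term that has already been admitted. If the defining condition in $c_*$ really were $\Delta'(\R)\subseteq S$, the lemma would simply be false: take $\Delta_X=\R_{\geq 0}$ so $\Delta_X(\R_\infty)=[0,\infty]$, and $S=\Delta_X(\R)=[0,\infty)$, which is open since $\{\infty\}$ is closed. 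Then $\Delta'=\Delta_X$ has $\Delta'(\R)=S$, so $c_*S=X$ and $c^{-1}c_*S=[0,\infty]\supsetneq S$.

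The resolution is that the condition in the definition of $c_*$ should be read as $\Delta'(\R_\infty)\subseteq S$ — the $(\R)$ in the paper is an abuse (compare the later $\sh U_{\Delta_X(\R)}$, which must also mean $\sh U_{\Delta_X(\R_\infty)}$). This is the reading under which $c_*$ is genuinely right adjoint to $c^{-1}$ on open sets: $c^{-1}U\subseteq S$ iff $\Delta'(\R_\infty)\subseteq S$ for every constituent polyhedron $\Delta'$ of $U$, iff $U\subseteq c_*S$. With that reading, the reverse inclusion $c^{-1}c_*S\subseteq S$ is tautological and requires no argument at all; only the forward inclusion requires the basis fact. So the correct move is not to repair the gap you found, but to observe that it is an artefact of the notation.
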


It remains to show that the essential image of the restriction of $c^{-1}$ to $\sh U^\mathrm{sur}_{/X}$ consists of Euclidean open sets.

Applying proposition I.\zref{I_PFAN_MODIFICATION} to any model of $X$ immediately yields a characterisation:

\begin{prop}\label{AFF_MODIFICATION}Let $X$ be an affine, quasi-integral rigid analytic space of finite type over $K$, $U\subseteq V\subseteq X$ quasi-compact open subsets. Then $V$ is an overconvergent neighbourhood of $U$ if and only if $\Delta_{V/H}(\R_\infty)$ is a neighbourhood of $\Delta_{U/H}(\R_\infty)$ in $\Delta_{X/H}(\R_\infty)$.\end{prop}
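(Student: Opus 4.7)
The plan is to reduce the statement to the corresponding result for formal models via proposition I.\zref{I_PFAN_MODIFICATION}. First, I would choose a common formal model: since $X$ is affine and quasi-integral, take $X^+=\Spec A^+$ the canonical relatively normal affine model. As $U\subseteq V\subseteq X$ are quasi-compact open subsets, after passing to an admissible blow-up $\widetilde X^+\to X^+$ I may assume $U$ and $V$ are represented by open subschemes $U^+\subseteq V^+\subseteq \widetilde X^+$. By lemma \ref{SEP_FSCH_TO_RIG}, proposition \ref{SEP_PROPER_MODEL} (and corollary \ref{SEP_SUR_MODEL}), $V$ is an overconvergent neighbourhood of $U$ in $X$ if and only if $V^+$ is an overconvergent neighbourhood of $U^+$ in $\widetilde X^+$ as formal schemes.

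Next, I would translate both sides of the desired equivalence into statements about punctured cone complexes. The formal scheme $\widetilde X^+$ carries a punctured fan $\Sigma_{\widetilde X^+}\subseteq N_{\widetilde X^+}$, and the inclusions $U^+,V^+\hookrightarrow \widetilde X^+$ correspond to subfans $\Sigma_{U^+}\subseteq\Sigma_{V^+}\subseteq\Sigma_{\widetilde X^+}$. As recorded in the formal models subsection, $\Delta_{X/H}$ sits inside $\Sigma_{\widetilde X^+}$ as the fibre over $1\in\End(H)$ of the restriction map $\Hom(K_X^\times,-)\to\End(H)$, and the same holds after compactification: the boundary strata of $\Delta_{X/H}(\R_\infty)$ are precisely the intersections of this fibre with the strata of $\Sigma_{\widetilde X^+}(\R_\infty)$ attached along the faces of the recession cones. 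Since $\Sigma_{\widetilde X^+}$ is obtained as a cone over the polyhedral decomposition of $\Delta_{X/H}$ given by $\widetilde X^+$, passing to neighbourhoods commutes with slicing: $\Delta_{V/H}(\R_\infty)$ is a neighbourhood of $\Delta_{U/H}(\R_\infty)$ in $\Delta_{X/H}(\R_\infty)$ if and only if $\Sigma_{V^+}(\R_\infty)$ is a neighbourhood of $\Sigma_{U^+}(\R_\infty)$ in $\Sigma_{\widetilde X^+}(\R_\infty)$.

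The final step is to invoke proposition I.\zref{I_PFAN_MODIFICATION}, which asserts precisely that an open immersion of punctured fans is overconvergent if and only if the corresponding inclusion of $\R_\infty$-realisations is a topological neighbourhood. Chaining the three equivalences (rigid overconvergence $\Leftrightarrow$ formal scheme overconvergence $\Leftrightarrow$ punctured fan overconvergence $\Leftrightarrow$ neighbourhood of polyhedral realisations) closes the argument.

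The main point requiring care, rather than a serious obstacle, is the compatibility between slicing and compactification: one needs to check that a Euclidean neighbourhood of $\Delta_{U/H}(\R_\infty)$ inside $\Delta_{X/H}(\R_\infty)$ actually extends to a neighbourhood of $\Sigma_{U^+}(\R_\infty)$ inside $\Sigma_{\widetilde X^+}(\R_\infty)$ and vice versa. This is a direct consequence of the fact that $\Sigma_{\widetilde X^+}$ is (as a topological space) the mapping cone over the given polyhedral subdivision of $\Delta_{X/H}$, so scaling in the radial direction preserves the neighbourhood relation; the quasi-integral hypothesis is what guarantees that the cone complex formalism of part I applies in the first place.
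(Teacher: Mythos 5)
Your proof is essentially the paper's own argument spelled out: the paper's one-line proof is ``Applying proposition I.\zref{I_PFAN_MODIFICATION} to any model of $X$ immediately yields a characterisation,'' and you have unpacked the implicit chain of equivalences (rigid overconvergence $\Leftrightarrow$ formal-model overconvergence $\Leftrightarrow$ punctured-fan neighbourhood $\Leftrightarrow$ slice neighbourhood) correctly. One small citation slip: for the transfer of overconvergence between rigid spaces and formal models you want lemma \ref{SEP_RIG_TO_FSCH} (which is about overconvergence), not lemma \ref{SEP_FSCH_TO_RIG} (which is about separation); the rest of the bookkeeping, including the slicing over $1\in\End(H)$ and the conical structure argument, is accurate.
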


Combining this and corollary \ref{SEP_SUR_OPEN}:

\begin{cor}\label{AFF_OVERCOR}An open subset $U\subseteq X$ is overconvergent if and only if $\Delta_{U/H}(\R_\infty)$ is open in $\Delta_{X/H}(\R_\infty)$.\end{cor}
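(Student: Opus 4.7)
The plan is to leverage Proposition \ref{AFF_MODIFICATION}, which already gives the desired equivalence whenever both the inner and outer set are quasi-compact, and to bootstrap it to arbitrary $U$ by slicing $\Delta_{U/H}(\R_\infty)$ into the compact realisations $\Delta_{W/H}(\R_\infty)$ of qcqs opens $W \subseteq U$. The two ingredients that make the reduction go through are Corollary \ref{SEP_SUR_OPEN} -- so that overconvergence of $U$ is detected by asking $\mathrm{cl}(W/X) \subseteq U$ for every qcqs $W \subseteq U$ -- and the compactness of $\Delta_{W/H}(\R_\infty)$ in the $\R_\infty$-order topology established at the end of \S\ref{POLY_BOUNDARY}.

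For the forward implication, suppose $U$ is overconvergent and fix $p \in \Delta_{U/H}(\R_\infty)$. By the collage description of $U$ provided by Corollary \ref{AFF_COLL_THM}, there is a qcqs open $W \subseteq U$ with $p \in \Delta_{W/H}(\R_\infty)$. Corollary \ref{SEP_SUR_OPEN} then gives $\mathrm{cl}(W/X) \subseteq U$; since $\mathrm{cl}(W/X)$ is quasi-compact, finitely many qcqs opens of $U$ suffice to cover it, producing a qcqs open $V$ with $\mathrm{cl}(W/X) \subseteq V \subseteq U$. In particular $V$ is a qcqs overconvergent neighbourhood of $W$ in $X$, so Proposition \ref{AFF_MODIFICATION} guarantees that $\Delta_{V/H}(\R_\infty)$ is a Euclidean neighbourhood of $\Delta_{W/H}(\R_\infty)$, hence of $p$. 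Since $\Delta_{V/H}(\R_\infty) \subseteq \Delta_{U/H}(\R_\infty)$, the point $p$ lies in the interior.

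For the converse, assume $\Delta_{U/H}(\R_\infty)$ is open in $\Delta_{X/H}(\R_\infty)$ and pick any qcqs open $W \subseteq U$; by Corollary \ref{SEP_SUR_OPEN} it suffices to produce $V \subseteq U$ qcqs with $\mathrm{cl}(W/X) \subseteq V$. Compactness of $\Delta_{W/H}(\R_\infty)$ inside the open set $\Delta_{U/H}(\R_\infty)$ allows us to extract finitely many embedded polyhedra of the collage $\Delta_U$ whose union contains a Euclidean neighbourhood of $\Delta_{W/H}(\R_\infty)$; these correspond to a qcqs open $V \subseteq U$ to which Proposition \ref{AFF_MODIFICATION} applies, showing $V$ is an overconvergent neighbourhood of $W$ in $X$ and hence $\mathrm{cl}(W/X) \subseteq V \subseteq U$. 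The main subtlety is simply arranging the intermediate qcqs $V$ so that Proposition \ref{AFF_MODIFICATION} is literally applicable in each direction; once the compactness of $\Delta_{W/H}(\R_\infty)$ and the quasi-compactness of $\mathrm{cl}(W/X)$ are used to produce that $V$, the rest is a routine neighbourhood-of-compact-subset argument.
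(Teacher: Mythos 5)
Your proof is correct and follows precisely the route the paper intends by its terse "Combining this and corollary \ref{SEP_SUR_OPEN}": you reduce absolute overconvergence of $U$ to the quantified statement of Corollary~\ref{SEP_SUR_OPEN} over qcqs $W\subseteq U$, use compactness of $\mathrm{cl}(W/X)$ (resp.\ of $\Delta_{W/H}(\R_\infty)$) to insert an intermediate qcqs $V$, and then invoke Proposition~\ref{AFF_MODIFICATION} on the resulting triple $W\subseteq V\subseteq X$. The unpacking is exactly the implicit argument, so there is nothing to add.
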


This completes the proof of theorem \ref{AFF_OVERTHM} for affine $X$. The global statement is a straightforward generalisation: there is an adjunction
\[ c^{-1}:\sh U_{/X}\leftrightarrows \sh P(\Delta_X(\R_\infty)):c_* \]
with $c^{-1}$ left exact, and the overconvergent sets (resp. open sets) on the left (resp. right) are exactly those that remain so after pullback to affine set (resp. polyhedron). Therefore $c^{-1}\dashv c_*$ restrict to an equivalence $\sh U^\mathrm{sur}_{/X}\cong \sh U_{\Delta_X(\R)}$.

\subsection{Overconvergence criteria}
The arguments of this section are largely modelled on \S I.\zref{I_PFAN_CRITERIUM} - strangely, passing to analytic geometry actually introduces simplifications.

\begin{lemma}Let $\Delta$ be an overconvergent-locally compact collage. Then $\Delta(\R)$ is a locally contractible topological space.\end{lemma}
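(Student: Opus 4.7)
The plan is to reduce the problem to the qcqs case via overconvergent local compactness, and then to reduce it to the standard fact that finite polyhedral complexes are locally contractible.

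First I would use the hypothesis: given $p \in \Delta(\R)$, pick any qcqs open $U \ni p$ in the collage $\Delta$, and by overconvergent local compactness find a qcqs overconvergent neighbourhood $V \supseteq U$. By Corollary \ref{AFF_OVERCOR}, $V(\R_\infty)$ is Euclidean open in $\Delta(\R_\infty)$; intersecting with $\Delta(\R)$, which carries the subspace topology, shows that $V(\R)$ is an open neighbourhood of $p$ in $\Delta(\R)$. It therefore suffices to exhibit a contractible open neighbourhood of $p$ inside $V(\R)$, so I may replace $\Delta$ by $V$ and assume $\Delta$ itself is qcqs.

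Since $\Delta$ is qcqs, it admits a finite cover by affine (polyhedral) opens $P_1,\dots,P_n$, and by \S\ref{RIG_AFFINE} the transition maps are sub-polyhedron inclusions. By cutting each $P_i$ along the finitely many supporting hyperplanes of the facets of the other $P_j$'s, I would pass to a finer rational polyhedral refinement in which any two polyhedra of the cover meet in a common face of both. Because coverings in $\mathbf{Poly}^N_H$ are detected purely on $\Q H$-rational points (Lemma \ref{AFF_COVERING}), this refinement is automatically still a cover, so $V(\R)$ is the underlying topological space of a genuine finite polyhedral complex $|K|$.

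At this point I would invoke the classical open star construction: for $p \in |K|$ the set $\mathrm{st}(p) := \bigcup_{p \in \sigma}\mathrm{relint}(\sigma)$ is open in $|K|$, and the radial homotopy $H_t(x) = (1-t)x + tp$ is well-defined inside each closed polyhedron containing $p$ (using the convex structure of the ambient $H$-affine space) and agrees on intersections of faces, giving a deformation retraction of $\mathrm{st}(p)$ onto $\{p\}$. Hence $\mathrm{st}(p)$ is a contractible open neighbourhood of $p$ in $V(\R)$, completing the proof.

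The main obstacle is the combinatorial refinement step: a priori the $P_i$ meet along arbitrary sub-polyhedra rather than common faces, so one really does need the face-to-face subdivision before the radial homotopy glues across cells. Once this standard subdivision is in place, the rest of the argument is a direct application of the convex structure on each polyhedron and the overconvergent-to-Euclidean dictionary of Corollary \ref{AFF_OVERCOR}.
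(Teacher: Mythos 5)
Your overall plan — reduce to the qcqs case via local compactness, then realise the qcqs case as a finite polyhedral complex and use open stars — is in the same spirit as the paper (which dismisses the qcqs case as ``straightforward'' and then invokes Proposition~\ref{AFF_MODIFICATION} together with the fact that $\Delta(\R)$ is strongly topologised by polyhedra). But there is a real error in your reduction step, and it propagates.

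You write that, by Corollary~\ref{AFF_OVERCOR}, $V(\R_\infty)$ is Euclidean open in $\Delta(\R_\infty)$. Corollary~\ref{AFF_OVERCOR} applies to \emph{overconvergent open subsets}, and a qcqs overconvergent neighbourhood $V$ of $U$ is essentially never an overconvergent open subset: overconvergent opens are almost never quasi-compact, whereas $V(\R_\infty)$ for $V$ qcqs is compact, hence certainly not open in the connected Hausdorff space $\Delta(\R_\infty)$. The correct statement, Proposition~\ref{AFF_MODIFICATION}, says only that $V(\R_\infty)$ is a (possibly non-open) Euclidean \emph{neighbourhood} of $U(\R_\infty)$. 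Consequently ``$V(\R)$ is an open neighbourhood of $p$'' is false, and with it the inference that it suffices to find a contractible open subset of $V(\R)$: a set open in $V(\R)$ need not be open in $\Delta(\R)$ unless it lies in the interior of $V(\R)$. The fix is easy but must be made: since the interior of $V(\R)$ in $\Delta(\R)$ contains $U(\R)$, hence $p$, choose the face-to-face subdivision of $V$ fine enough that the \emph{closed} star of $p$ is contained in that interior; then the open star is open in $\Delta(\R)$ and your radial deformation retraction applies.

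Two smaller remarks on the qcqs step, which the paper omits entirely. First, ``cutting each $P_i$ along the supporting hyperplanes of the facets of the other $P_j$'' is not literally meaningful, since the $P_i$ sit in distinct ambient affine spaces $N_i$; what one actually does is pull back the facet hyperplanes of $P_j\cap P_i$ (a sub-polyhedron of $P_i$ in $N_i$) and then iterate a common-refinement argument so that the resulting subdivisions agree on all the overlaps. Second, since $\Delta$ is only quasi-separated, the intersections $P_i\cap P_j$ may be finite unions of polyhedra rather than single polyhedra, so the refinement has to be run on each piece. Neither of these affects the conclusion, but they are exactly the sort of detail the word ``straightforward'' in the paper is hiding, and your write-up should not pretend they are automatic.
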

\begin{proof}The statement is straightforward in the case $\Delta$ is qcqs. More generally, if $\Delta$ is locally compact, then every polyhedron $\Delta_U$ has a qcqs overconvergent neighbourhood $\Delta_V$. By proposition \ref{AFF_MODIFICATION}, $\Delta_V(\R)$ restricts to a Euclidean neighbourhood of $\Delta_U(\R)$ on each polyhedron of $\Delta$. Since $\Delta(\R)$ is strongly topologised by polyhedra, the result follows.\end{proof}

\begin{remark}The converse is false, because in certain non-locally-compact geometries, $\Delta(\R)$ may fail to detect topological features. For instance, let $N$ be a 2-dimensional $\Z$-affine space, considered as a non-compact collage as in example \ref{AFF_AFFINE_SPACE}, and let $P_i\subset N$ be a filtered family of rational polyhedra such that
\begin{itemize}\item each $P_i$ has non-empty interior;
\item $\bigcap_iP_i=\{x\}$ with $x$ a vertex of each $P_i$.\end{itemize}
Then
\[ U:=\bigcup_i(N\setminus\mathrm{int}(P_i)) \]
is an open subset of $N$ with the same set of $\R$-points ($\simeq\R^2$). In particular, it is separated. However, the bijection $U(\R)\rightarrow N(\R)$ is not coming from a covering that is locally finite at $x$, and so by lemma \ref{AFF_COVERING} $U\hookrightarrow N$ is not an isomorphism. In fact, $U$ is not locally simply-connected at $x$.
\end{remark}

In particular, the developing map of a locally compact collage extends to a qcqs overconvergent neighbourhood of any polyhedron. This allows us to apply the arguments of propositions I.\zref{I_PFAN_SEP} and I.\zref{I_PFAN_PROP} to prove analogous statements for analytic spaces:

\begin{prop}\label{AFF_SUR}Let $\Delta\in\mathrm{C}\Poly_H^N$ be a locally compact collage. Then the developing map $\delta:\Delta\rightarrow N$ is overconvergent-locally defined. Moreover:
\begin{enumerate}\item $\Delta$ is locally separated if and only if $\delta$ is an overconvergent-local immersion;
\item $\Delta$ is overconvergent if and only if $\delta$ is an overconvergent-local homeomorphism.\end{enumerate}\end{prop}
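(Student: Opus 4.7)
The plan is to bootstrap from the analogous results for punctured cone complexes proven in part I (propositions I.\zref{I_PFAN_SEP} and I.\zref{I_PFAN_PROP}), using as a bridge the observation made immediately before the statement: local compactness supplies, for every polyhedron $\Delta_U\subseteq\Delta$, a qcqs overconvergent neighbourhood $\Delta_V$, and any relatively normal formal model $X_V^+$ of $\Delta_V$ carries a punctured cone complex $\Sigma_{X_V^+}$ realising $\Delta_V$ as the fibre over $1_{\End(H)}$ of the developing map $\Sigma_{X_V^+}\rightarrow N_{X_V^+}$ described in \S I.\zref{I_PFAN_CONE}.

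First I would establish that $\delta$ is overconvergent-locally defined. Given $p\in\Delta(\R)$, pick a polyhedron $\Delta_U\ni p$ and a qcqs overconvergent neighbourhood $\Delta_V\supseteq\Delta_U$ guaranteed by local compactness. Restricting the developing map of the punctured cone complex of any model $X_V^+$ of $\Delta_V$ to the affine slice $\Delta_V\hookrightarrow\Sigma_{X_V^+}$ delivers a developing map $\delta_V:\Delta_V\rightarrow N$ on this overconvergent patch, independent of the choice of model up to canonical isomorphism.

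For part (i), both directions proceed by localisation to qcqs overconvergent patches. In the forward direction, local separation of $\Delta$ passes to each patch $\Delta_V$ by proposition \ref{SEP_SEP_MODEL}, and hence to a formal model $X_V^+$; proposition I.\zref{I_PFAN_SEP} then asserts that the developing map of $\Sigma_{X_V^+}$ is a local immersion, which restricts along $\Delta_V\hookrightarrow\Sigma_{X_V^+}$ to show $\delta_V$ is an immersion. Conversely, if $\delta$ is an overconvergent-local immersion, then overconvergent-locally $\Delta$ is identified with an open subobject of the affine-space collage $N$ of example \ref{AFF_AFFINE_SPACE}. Since $N$ is manifestly locally separated (the diagonal is cut out by affine equalities in the ambient $N\times N$) and local separation passes to open subobjects and is local on overconvergent covers, $\Delta$ is locally separated.

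For part (ii), one direction is formal: if $\delta$ is an overconvergent-local homeomorphism, then overconvergent-locally $\Delta$ is an open subobject of $N$, which is overconvergent; since overconvergence is local on overconvergent covers, $\Delta$ is overconvergent. The hard part -- and the main obstacle of the proof -- is the converse, where overconvergence of $\Delta$ must upgrade the local immersion produced by (i) to an open embedding on each overconvergent chart. I would again reduce to a qcqs overconvergent patch $\Delta_V$ with formal model $X_V^+$; overconvergence of $\Delta_V$ over $\F_1(\!(t^{-H})\!)$ passes to $\Sigma_{X_V^+}$, so proposition I.\zref{I_PFAN_PROP} shows that the developing map of the punctured cone complex is a local homeomorphism. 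Restricting along the affine slice $\Delta_V\hookrightarrow\Sigma_{X_V^+}$ preserves the local homeomorphism property because the slice is transverse to the developing direction, yielding the desired $\delta_V$.
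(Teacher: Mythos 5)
Your reduction to punctured cone complexes via formal models is a reasonable and natural route, and for part (i) it goes through: separation is stable under restriction to open subobjects, and lemma \ref{SEP_FSCH_TO_RIG} transports local separation of the analytic patch $\Delta_V$ to its model $X_V^+$, so I.\zref{I_PFAN_SEP} applies. The paper takes a closer route — it reruns the \emph{arguments} of I.\zref{I_PFAN_SEP} and I.\zref{I_PFAN_PROP} directly in the analytic setting rather than transferring to a formal model and back — but for (i) the two are essentially interchangeable.

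For the forward direction of part (ii), however, there is a genuine gap. You write that ``overconvergence of $\Delta_V$ over $\F_1(\!(t^{-H})\!)$ passes to $\Sigma_{X_V^+}$,'' but $\Delta_V$ is \emph{not} overconvergent over the base field. Local compactness supplies a qcqs space $\Delta_V$ such that the open immersion $\Delta_U\hookrightarrow\Delta_V$ is overconvergent; this is a statement about that morphism, not about the structure map $\Delta_V\to\Spec\F_1(\!(t^{-H})\!)$. Being qcqs, $\Delta_V$ has a boundary and is certainly not overconvergent over the base (unless it is proper), so the hypothesis of I.\zref{I_PFAN_PROP} is not met for $\Sigma_{X_V^+}$. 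The fix is to use overconvergence of the ambient collage $\Delta$ directly: having defined $\delta$ on the qcqs neighbourhood $\Delta_U'$, choose a polyhedron $\Delta_W\subseteq N_U$ whose real points contain a neighbourhood of $\Delta_U(\R_\infty)$; this is a formally projective thickening, so overconvergence of $\Delta$ produces a unique section $\Delta_W\to\Delta_U'$ of $\delta$ (after possibly shrinking $\Delta_W$), exhibiting $\delta$ as a local homeomorphism. This is the argument the paper sketches. You should also be careful with the converse of (ii): you invoke ``$N$ is overconvergent,'' but the paper derives that fact as a \emph{corollary} of this very proposition (applied to $\delta_N = \mathrm{id}$), so that route is circular unless you supply an independent argument.
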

\begin{proof}Since the arguments are essentially the same as those of \emph{loc. cit.}, I include only one part, by way of illustration.

Suppose that $\Delta$ is overconvergent, and let $\Delta_U\subseteq\Delta$ be a polyhedron. There is a qcqs overconvergent neighbourhood $\Delta_U^\prime$ of $\Delta_U$ such that $\delta:\Delta_U^\prime\rightarrow N_U$ is defined. Let $\Delta_V\subseteq N_U$ be a polyhedron whose real points contain a neighbourhood of $\Delta_U(\R_\infty)$. By overconvergence, after possibly shrinking $\Delta_V$, there is a unique section
\[ \Delta_V\rightarrow\Delta_U^\prime \] to $\delta$. Thus $\delta$ is a local homeomorphism.\end{proof}

For simplicity, I have stated the absolute version, though to derive the relative version as in I.\zref{I_PFAN_PROP} would be straightforward. Note that unlike the case of punctured cone complexes, the collages appearing in this theorem are not necessarily quasi-separated.

\begin{cor}An $H$-affine space $N$, considered as a collage (e.g. \ref{AFF_AFFINE_SPACE}), is overconvergent. The natural map $N(\R)\rightarrow N^\mathrm{sur}$ is a homeomorphism.\end{cor}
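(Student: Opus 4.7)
The approach is to verify the hypotheses of proposition \ref{AFF_SUR}(ii) for the collage $N$, which yields overconvergence, and then apply theorem \ref{AFF_OVERTHM} to identify the overconvergent topology. Both steps turn out to be almost tautological for the affine space.

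First I would check that $N$ is overconvergent-locally compact in the sense of definition \ref{SEP_COMPACT}. Any qcqs open subset of $N$ factors through a compact sub-polyhedron $\Delta \subseteq N$. Choosing a slightly larger compact polyhedron $\Delta' \subseteq N$ whose Euclidean interior contains $\Delta(\R)$, proposition \ref{AFF_MODIFICATION} ensures that $\Delta'$ is a qcqs overconvergent neighbourhood of $\Delta$. Thus $N$ is locally compact (in fact locally convex) as a collage, placing it in the setting of proposition \ref{AFF_SUR}.

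Next, I would observe that the developing map for $N$ (regarded as a collage via example \ref{AFF_AFFINE_SPACE}) is simply the identity $\mathrm{id}_N : N \to N$: on each constituent compact polyhedron $\Delta \subseteq N$ it is just the tautological inclusion $\Delta \hookrightarrow N$, and because $N$ is simply connected, these local charts glue with trivial monodromy to give a global developing map $\mathrm{id}_N$. The identity is obviously an overconvergent-local homeomorphism (it is globally defined and bijective), so proposition \ref{AFF_SUR}(ii) immediately implies that $N$ is overconvergent.

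For the second assertion, theorem \ref{AFF_OVERTHM} applies to $N$ (which is normal, hence quasi-integral, and locally of finite type over $\F_1(\!(t^{-H})\!)$), giving that the composite $\Delta_N(\R_\infty) \to N \to N^\mathrm{sur}$ is a homeomorphism. To conclude it remains to identify $\Delta_N(\R_\infty)$ with $N(\R)$. But in example \ref{AFF_AFFINE_SPACE} the collage $N$ admits morphisms only from bounded polyhedra, so every polyhedron $\Delta$ occurring in $\Delta_N$ satisfies $\Delta(\R_\infty) = \Delta(\R)$, and the colimit topology of these compact polyhedra inside the affine space is exactly the standard Euclidean topology on $N(\R)$. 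The only genuinely non-formal step — the main (such as it is) obstacle — is the appeal to proposition \ref{AFF_MODIFICATION} to produce overconvergent neighbourhoods by polyhedral enlargement; after that, the collage formalism trivialises both claims.
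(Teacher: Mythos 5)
Your proof is correct and is the natural derivation from the surrounding material. The paper leaves this corollary without an explicit proof, so there is no separate ``paper's argument'' to compare against, but your route — verify overconvergent-local compactness of the collage $N$, observe the developing map is $\mathrm{id}_N$, apply proposition~\ref{AFF_SUR}(ii), then invoke theorem~\ref{AFF_OVERTHM} and identify $\Delta_N(\R_\infty)$ with $N(\R)$ — is precisely what the placement of the corollary after proposition~\ref{AFF_SUR} and before the ``it follows from theorem~\ref{AFF_OVERTHM}\ldots'' remark signals. Two very small points worth tightening: proposition~\ref{AFF_MODIFICATION} is stated with $X$ affine, so to apply it you should name a third compact polyhedron $\Delta''\supseteq\Delta'$ and check the neighbourhood condition there (the condition is polyhedral and hence local, so this is harmless); and your appeal to simple connectivity of $N$ is superfluous — the local developing maps on the constituent polyhedra are literally the inclusions into the same ambient $N$, so they glue tautologically without any monodromy analysis.
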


It follows from theorem \ref{AFF_OVERTHM} that in fact:

\begin{cor}Let $\Delta\in\mathrm{C}\Poly_H^N$ be a locally compact collage in bounded polyhedra. Then the developing map $\delta:\Delta(\R)\rightarrow N(\R)$ is locally defined. Moreover:
\begin{enumerate}\item $\Delta$ is locally separated if and only if $\delta$ is an local immersion;
\item $\Delta$ is overconvergent if and only if $\delta$ is an local homeomorphism.\end{enumerate}\end{cor}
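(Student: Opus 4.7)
The plan is to derive this corollary as a direct translation of proposition \ref{AFF_SUR} through the homeomorphism supplied by theorem \ref{AFF_OVERTHM}. The essential point is that when $\Delta\in\mathrm{C}\Poly_H^N$ consists of bounded polyhedra, every recession cone is trivial, so the construction of \S\ref{POLY_BOUNDARY} adds no strata at infinity and $\Delta(\R_\infty)=\Delta(\R)$ as sets. Combining this with theorem \ref{AFF_OVERTHM} gives a homeomorphism $\Delta(\R)\cong\Delta^\mathrm{sur}$, and under this homeomorphism corollary \ref{AFF_OVERCOR} identifies the overconvergent open subsets of $\Delta$ with the Euclidean open subsets of $\Delta(\R)$.

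With this dictionary in hand, every occurrence of the prefix ``overconvergent-local'' in proposition \ref{AFF_SUR} can be replaced by ``Euclidean-local on $\Delta(\R)$''. First I would check that ``$\delta$ is locally defined'' transports: an overconvergent open cover of $\Delta$ on which $\delta$ is defined becomes, via $c^{-1}$, a Euclidean open cover of $\Delta(\R)$ on which the induced real-points map is defined (using also the preceding corollary $N(\R)\cong N^\mathrm{sur}$, so that $\delta$'s target in the topological picture is the ordinary affine space $N(\R)$). The same translation applied in the two directions of parts i) and ii) gives the corollary, since ``local immersion'' and ``local homeomorphism'' are themselves defined by the existence of an open cover on which the morphism has the relevant absolute property.

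The mild subtlety I would want to nail down is that, under the identification of open-set lattices, being an immersion (resp. an isomorphism) on a basic overconvergent piece on the collage side is equivalent to being an immersion (resp. a homeomorphism) on the corresponding Euclidean piece of $\Delta(\R)$. This is essentially tautological, since $N$ itself satisfies $N(\R)\cong N^\mathrm{sur}$ and both the source and target of $\delta$ are locally compact collages in bounded polyhedra; passing to $\R$-points is then an equivalence of the relevant localised categories.

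The main obstacle is thus bookkeeping rather than mathematics: all the real work has been done in proposition \ref{AFF_SUR} and theorem \ref{AFF_OVERTHM}, and the boundedness hypothesis exists precisely to ensure $\Delta(\R_\infty)=\Delta(\R)$, which is what allows the overconvergent picture to collapse onto ordinary topology of the real realisation.
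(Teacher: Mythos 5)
Your proposal is correct and takes essentially the same approach as the paper, which after the preceding corollary ($N(\R)\cong N^\mathrm{sur}$) simply asserts "It follows from theorem \ref{AFF_OVERTHM}" and leaves the translation you spell out implicit. Your observation that boundedness forces $\Delta(\R_\infty)=\Delta(\R)$ is exactly the right reason the statement can be phrased in terms of $\Delta(\R)$.
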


\begin{cor}\label{AFF_SEPARATED}Let $X$ be a quasi-integral analytic space locally compact and locally of finite type over $\F_1(\!(t^{-H})\!)$. The developing map \[ \delta:\Delta_{X/H}\rightarrow N_{X/H}\] is locally defined. Moreover:
\begin{enumerate}\item $X$ is locally separated if and only if $\delta$ is a local immersion;
\item $X$ is overconvergent if and only if $\delta$ is a local homeomorphism.\end{enumerate}\end{cor}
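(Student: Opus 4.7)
The plan is to deduce this corollary from Proposition \ref{AFF_SUR} by transferring the statement across the equivalence $\Delta$ of Theorem \ref{AFF_COLL_THM}, and using Theorem \ref{AFF_OVERTHM} to identify overconvergent-local with Euclidean-local notions on the real-point realisation.

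First I would reduce to the case that $X$ is normal by passing to the normalisation $X^n \to X$. This finite integral modification leaves both $\Delta_{X/H}$ and $N_{X/H}$ unchanged, since, as noted immediately after the definition of the convergence polyhedron, $\Delta_{A/H}$ depends only on the relative normalisation of $(A;A^+)$. Moreover $X^n \to X$ is universally bijective on points valued in totally ramified extensions and a universal homeomorphism on any overconvergent topology, so each of local compactness, local separation, and overconvergence is preserved and reflected.

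Next, via Theorem \ref{AFF_COLL_THM} the normal analytic space $X$ corresponds to a collage $\Delta_{X/H} \in \mathrm{C}\mathbf{Poly}_H^N$. Because this equivalence matches open-subset lattices and the characterisation of covering families given in that theorem, it carries local compactness, local separation, and overconvergence on the rigid side directly to their collage analogues. Applying Proposition \ref{AFF_SUR} to $\Delta_{X/H}$ then gives the characterisation of $\delta$ as overconvergent-locally defined, together with the immersion/homeomorphism dichotomy; Theorem \ref{AFF_OVERTHM} converts this into the corresponding Euclidean-local statement on $\Delta_{X/H}(\R_\infty)$.

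The main obstacle, though not a deep one, lies in this second step: one must verify that local separation and overconvergence, as properties of morphisms in $\mathbf{Rig}$, really do correspond under the equivalence to the analogous properties of morphisms of collages. This reduces to identifying the class $f\P$ of formally projective morphisms in $\mathbf{Rig}$ with the combinatorial class of polyhedral refinements in $\mathrm{C}\mathbf{Poly}_H^N$, which is essentially the content of the earlier proposition identifying the category of relatively normal models of $X$ with the poset of polyhedral decompositions of $\Delta_{X/H}$. Once this dictionary is in place, the extension problems defining each property translate one-to-one between the two categories, and the corollary follows.
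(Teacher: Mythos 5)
Your core route — transfer the statement to the collage $\Delta_{X/H}$, apply Proposition \ref{AFF_SUR}, and use Theorem \ref{AFF_OVERTHM} to identify the overconvergent topology with the Euclidean topology on $\Delta_{X/H}(\R_\infty)$ — is the argument the paper has in mind. The dictionary you invoke (relatively normal models of $X$ $\leftrightarrow$ polyhedral decompositions of $\Delta_{X/H}$, and hence $f\P$ $\leftrightarrow$ refinements) is indeed what powers Proposition \ref{AFF_MODIFICATION} and Corollary \ref{AFF_OVERCOR}, which are the working parts here.

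The detour through normalisation, however, is both unnecessary and the weak point of your write-up. It is unnecessary because the paper does not restrict to normal spaces to run this argument: the functor $\Delta$ is a geometric morphism out of $\Sh\mathbf{Rig}^{\mathrm{ltf/qi/nb}}$ with bijective open-subset lattices, and Theorem \ref{AFF_OVERTHM}, Proposition \ref{AFF_MODIFICATION}, and Corollary \ref{AFF_OVERCOR} are all stated for \emph{quasi-integral} $X$. You can therefore read off the overconvergent topology of $X$ on $\Delta_{X/H}(\R_\infty)$ directly and apply Proposition \ref{AFF_SUR} without ever passing to a normal model. It is the weak point because your reduction rests on the unproved claim that the normalisation $X^n\rightarrow X$ ``preserves and reflects'' local separation and overconvergence over the base $K$. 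Overconvergence of $X/K$ is not a purely topological property of $X$ but an extension-problem condition; that the composite $X^n\rightarrow X\rightarrow \Spec K$ being overconvergent forces $X\rightarrow\Spec K$ to be overconvergent is a cancellation statement along a proper surjection that the paper never proves and that does not obviously follow from proposition I.\zref{I_SEP_STABILITY}. If you want to keep the reduction, you would need to justify this reflection; the cleaner course is to drop it and observe that the quasi-integral hypothesis is already sufficient for every ingredient you invoke.
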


\begin{cor}Let $X$ be overconvergent. Then $\Delta_X(\R)$ carries a unique structure of an affine manifold with developing map $\delta$.\end{cor}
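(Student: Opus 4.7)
The plan is to transport the canonical $H$-affine structure from the target of the developing map back to $\Delta_X(\R)$ along the local homeomorphism that $\delta$ becomes under the overconvergence hypothesis. Uniqueness will be essentially automatic once the structure is constructed from $\delta$-charts.

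First I would invoke Lemma~\ref{SEP_SUR=>LOCALLY} to conclude that $X$, being overconvergent over the qcqs base $\Spec\F_1(\!(t^{-H})\!)$, is overconvergent-locally compact; this places us in the setting of Corollary~\ref{AFF_SEPARATED}, which supplies the fact that $\delta:\Delta_{X/H}\to N_{X/H}$ is an overconvergent-local homeomorphism. Combining this with Theorem~\ref{AFF_OVERTHM} (so that the topology of $\Delta_X(\R_\infty)$ really does compute the overconvergent topology of $X$), one obtains a genuine local homeomorphism $\delta:\Delta_X(\R)\to N(\R)$ of topological spaces, where $N(\R)$ is (locally over $\Delta_X$) an $H$-affine space in the sense of~\S\ref{POLY_AFF}.

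Next I would assemble the atlas. Local sections of $\delta$ yield charts on $\Delta_X(\R)$ modelled on open subsets of $H$-affine spaces. On an overlap, the transition between two such charts is the germ of a local homeomorphism between open subsets of two stalks of the local system of affine spaces $N$ recalled in~\S\ref{AFF_COLL}. Since $N$ is constructed as $\Hom(K_X^\times,-)$ from the sheaf of characters, it is a local system of $H$-affine spaces, and its monodromy takes values in the affine automorphism group $\Aut(\Aff(N,H))=\GL_n(\Z)\ltimes H^n$. Therefore the transitions are $H$-affine, giving the desired $H$-affine manifold structure, with $\delta$ as developing map by construction. For uniqueness: any $H$-affine manifold structure on $\Delta_X(\R)$ for which $\delta$ is a local affine isomorphism must agree with this one on the $\delta$-charts, and these form a basis of opens.

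The only non-formal point is the verification that the local system $N$ has genuinely $H$-affine monodromy, not merely continuous monodromy — this is what guarantees that the pulled-back transition cocycle on $\Delta_X(\R)$ lies in $\GL_n(\Z)\ltimes H^n$ rather than just in the homeomorphism group. This is built into the construction of $N$ as a local system of $H$-affine spaces via $\Hom(K_X^\times,-)$ in~\S\ref{AFF_COLL}, and into the requirement that morphisms of collages be affine on the ambient spaces; checking this cleanly is the main (if mild) obstacle.
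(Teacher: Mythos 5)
Your proof is correct and follows essentially the route the paper leaves implicit: chaining Lemma~\ref{SEP_SUR=>LOCALLY} (to secure local compactness), Corollary~\ref{AFF_SEPARATED}(ii), and Theorem~\ref{AFF_OVERTHM} to realise $\delta$ as a genuine local homeomorphism of topological spaces, then pulling back the $H$-affine atlas along local sections of $\delta$. You also correctly identify the one non-formal verification --- that the transition cocycle lands in $\GL_n(\Z)\ltimes H^n$ rather than just in the homeomorphism group --- which is exactly the content of $N=\Hom(K_X^\times,-)$ being a local system of $H$-affine spaces.
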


\subsection{$H$-affine manifolds as rigid analytic spaces over $\F_1(\!(t^{-H})\!)$}
Here we globalise the constructions of \S\ref{AFF_OVER}. For simplicity, we will treat only the \emph{boundaryless} rigid analytic spaces. One can classify more general normal analytic spaces in terms of affine manifolds with corners; a sketch-definition of such objects can be found in \S\ref{POLY_AFF}.

\begin{defn}A rigid analytic space is said to be \emph{boundaryless} if its only closed subspaces are unions of connected components. Equivalently, it is locally modelled by the spectra of locally convex $\F_1$-fields.\end{defn}

Note that the condition of being boundaryless in particular implies normality. It does not say anything about the topological boundary of $\Delta(\R)$. 

Let $\mathbf{Aff}_H$ be the category of paracompact $H$-affine manifolds and $H$-affine maps. It is finitely complete, and generated under colimits by open subsets of $H$-affine space. The Euclidean topology endows $\mathbf{Aff}_H$ with the structure of a spatial, but not coherent, site. If we allow affine manifolds to be non-Hausdorff, then Yoneda matches $\mathbf{Aff}_H$ with the category of all paracompact locally representable objects of $\Sh\mathbf{Aff}_H$.

Every object of $\mathbf{Aff}_H$ is exhausted by bounded polyhedra, and so the functor
\[ c^{-1}:\mathbf{Aff}_H\rightarrow\Sh\mathbf{Poly}_H^N, \quad U\mapsto\left[ \Delta\mapsto \left\{\begin{array}{cl}  \Hom_H(\Delta,U) & \Delta\text{ bounded} \\ \emptyset & \text{otherwise}\end{array}\right.\right]   \]
is fully faithful.

\begin{lemma}$c^{-1}$ preserves finite limits, open immersions, and coverings.\end{lemma}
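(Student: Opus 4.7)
The plan is to verify each of the three preservation properties by checking things polyhedron-by-polyhedron, since finite limits, sub-objects, and coverings in $\Sh\mathbf{Poly}_H^N$ can all be tested on each representable. The concrete input we lean on is the explicit formula $c^{-1}U(\Delta)=\Hom_H(\Delta,U)$ for $\Delta$ bounded, exhibiting $c^{-1}(-)(\Delta)$ as the functor on $\mathbf{Aff}_H$ corepresented by $\Delta$, together with the fact that $c^{-1}(-)(\Delta)=\emptyset$ identically for unbounded $\Delta$.

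For finite limits, computed pointwise in the sheaf topos, preservation follows because on each bounded $\Delta$ the functor $c^{-1}$ restricts to the representable $\Hom_H(\Delta,-)$, which is left exact, while on each unbounded $\Delta$ it is constant with value $\emptyset$, which trivially preserves connected finite limits. The terminal object in $\mathbf{Aff}_H$ is a single point, whose image $c^{-1}(\mathrm{pt})$ is the subterminal object supported on the bounded polyhedra; this is the terminal object of the full subcategory of $\Sh\mathbf{Poly}_H^N$ into which $c^{-1}$ takes values, and it is with respect to this subcategory that left exactness is to be read.

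For open immersions and coverings the arguments are parallel. Given an open immersion $V\hookrightarrow U$ and a bounded polyhedron $\Delta$ with an $H$-affine map $f\colon\Delta\to U$, the pullback $c^{-1}V\times_{c^{-1}U}\Delta$ represents the subfunctor of $\Delta$ consisting of sub-polyhedra $\Delta^\prime\subseteq\Delta$ with $f(\Delta^\prime)\subseteq V$, i.e.\ the sub-collage of $\Delta$ supported on the Euclidean-open preimage $f^{-1}(V)\subseteq\Delta(\R)$; this is precisely an open subobject in the polyhedral site. For a cover $\{U_i\hookrightarrow U\}$ the same pullback gives $\{f^{-1}(U_i)\}$, and compactness of $\Delta(\R)$ yields a finite subcover, each member of which is exhausted by its rational sub-polyhedra; concatenating these produces a finite refinement of $\Delta$ whose union exhausts $\Delta(\Q H)$, which is the covering condition of corollary \ref{AFF_COLL_THM}.

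The main obstacle is the technical lemma underlying the second and third parts: a Euclidean-open subset of a compact $H$-rational polyhedron that is locally cut out by affine inequalities (possibly with irrational coefficients, since $V\subseteq U$ need not be $H$-rational) is the directed union of its rational sub-polyhedra, and in particular its $\Q H$-rational points are dense in its real points. Once this elementary fact is granted, the two geometric arguments go through on each representable, and the globalisation is automatic from the pointwise nature of limits, subobjects, and coverings in the sheaf topos.
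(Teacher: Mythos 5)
Your proof is correct and follows essentially the same route as the paper: left exactness because $\Hom_H(\Delta,-)$ is representable hence preserves fibre products, open immersions by identifying the pullback along $\Delta\to c^{-1}U$ with the sub-collage supported on $f^{-1}(V)$, and coverings by compactness of $\Delta(\R)$ plus the density of $H$-rational sub-polyhedra (which gives the refinement needed for the criterion of corollary \ref{AFF_COLL_THM}). Two small points of comparison: the paper invokes paracompactness of $U$ to make the cover locally finite before restricting to each compact polyhedron, whereas you extract finiteness directly from compactness of $\Delta(\R)$ — both amount to the same thing; and you are right to flag that $c^{-1}(\mathrm{pt})$ is only the subterminal ``bounded polyhedra'' object, a subtlety the paper's one-line proof does not address — the honest reading is that $c$ is a geometric morphism out of the slice of $\Sh\mathbf{Poly}_H^N$ over that subterminal, which is what you describe.
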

\begin{proof}\emph{Lex.} An $H$-affine map $\Delta\rightarrow U$ is determined by its underlying map of sets $\Delta(H)\rightarrow U(H)$, and affineness is stable for fibre products. Therefore $c^{-1}$ is left exact.

\emph{Open immersions.} Clear from the definition.

\emph{Coverings.} Let $U=\bigcup_iU_i$ be an open covering. Since $U$ is paracompact, the cover may be assumed locally finite. It therefore restricts to a locally finite family on each polyhedron that is covering for $\R$-points, and hence a covering by lemma \ref{AFF_COVERING}.\end{proof}

It therefore extends to a geometric morphism
\[ c:\Sh\Poly_H\rightarrow \Sh\mathbf{Aff}_H,\qquad c^*:\mathbf{Aff}_H\rightarrow\mathrm{C}\mathbf{Poly}_H \]
whose pullback preserves locally representable objects. It also respects developing maps. By the criterion of proposition \ref{AFF_SUR}, it follows that the objects in the essential image of $c^*$ are actually \emph{overconvergent}. By construction, they also do not admit morphisms from any collage with boundary, and so are themselves boundaryless.

\begin{thm}The geometric morphism \[c:\Sh\Poly_H^{\not\partial/\mathrm{sur}} \rightarrow \Sh\mathbf{Aff}_H\] is a spatial equivalence of categories.\end{thm}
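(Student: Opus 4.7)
The preceding lemma established that $c^*$ is fully faithful, left exact, and preserves open immersions and coverings, and the remark following it records that the image of $c^*$ lies in the subcategory of overconvergent boundaryless collages. My plan is therefore to establish essential surjectivity of
\[ c^*:\mathbf{Aff}_H\longrightarrow \mathrm{C}\mathbf{Poly}_H^{\not\partial/\mathrm{sur}}, \]
and then to upgrade this equivalence of (locally representable objects of) sites to an equivalence of topoi, using that both sides are spatial sites and that $c^*$ respects the covering structure.

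To invert $c^*$, I would send an overconvergent boundaryless collage $\Delta$ to its topological realisation $B:=\Delta(\R)$. By proposition \ref{AFF_SUR}, the developing map $\delta:\Delta\to N$ is an overconvergent-local homeomorphism, and the resulting affine charts patch to equip $B$ with the structure of an $H$-affine manifold whose developing map is $\delta$. Paracompactness of $B$ follows by exhausting $\Delta$ with qcqs overconvergent open subcollages, whose $\R$-points form a locally finite compact exhaustion via theorem \ref{AFF_OVERTHM}. Taking $\R$-points of a morphism of collages supplies a natural comparison map $\phi:\Delta\to c^*(B)$ of sheaves on $\mathbf{Poly}_H^N$.

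To check that $\phi$ is an isomorphism, I would evaluate on an arbitrary bounded polyhedron $P$ and exhibit a bijection
\[ \Hom_{\mathrm{C}\mathbf{Poly}_H}(P,\Delta)\;\tilde\longrightarrow\;\Hom_H(P(\R),B). \]
Injectivity is clean: a morphism of collages is determined by its $\Q H$-rational points by lemma \ref{AFF_COVERING}, and these are recovered from the underlying affine map on $\R$-points. For surjectivity, given an $H$-affine $g:P(\R)\to B$, the compactness of $g(P(\R))$ allows a cover by finitely many affine charts $U_i\subseteq B$, each of which corresponds via corollary \ref{AFF_OVERCOR} to an overconvergent open subcollage of $\Delta$. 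I would subdivide $P$ into finitely many bounded sub-polyhedra $P_j$, each landing in a single chart $U_{i(j)}$, and then invoke the fact that $\delta$ restricted to the preimage of $U_{i(j)}$ is a homeomorphism onto an open of affine space to lift $g|_{P_j(\R)}$ uniquely to a collage morphism $P_j\to\Delta$. These local lifts agree on overlaps because they coincide on $\R$-points (again by lemma \ref{AFF_COVERING}), and so they glue to the required morphism $P\to\Delta$.

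The hard part will be this final lifting step, which is where both overconvergence --- needed to upgrade $\delta$ from a local immersion to a local homeomorphism --- and boundarylessness --- needed to rule out infinite strata that would block unique affine lifting --- enter essentially. Everything else is either formal or a direct application of theorem \ref{AFF_OVERTHM}. Once essential surjectivity of $c^*$ is in hand, the equivalence at the level of topoi follows since $c^*$ is a morphism of sites whose underlying functor of locally representable objects is an equivalence, and both ambient topoi are spatial.
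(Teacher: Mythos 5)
Your proposal is correct and is essentially the same route as the paper's, hinging on the same key input: proposition \ref{AFF_SUR}, which upgrades the developing map $\delta$ of an overconvergent boundaryless collage to an overconvergent-local homeomorphism onto open subsets of an $H$-affine space. The paper phrases the second step as a comparison-lemma argument -- the image of $\mathbf{Aff}_H$ is a generating site for the overconvergent topos because $\mathbf{Aff}_H$ generates the overconvergent topology of any $N$ -- while you instead build the would-be inverse $\Delta\mapsto\Delta(\R)$ explicitly and verify $\Delta\cong c^*(\Delta(\R))$ by evaluating on bounded polyhedra. Both are sound; yours proves the slightly stronger statement that $c^*$ is essentially surjective onto locally representable overconvergent boundaryless collages, at the cost of the local-lift-and-glue bookkeeping, where the paper's generation argument sidesteps having to name a global preimage.

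One caveat: the claim that $B=\Delta(\R)$ is automatically paracompact is not justified. Overconvergence gives local compactness (lemma \ref{SEP_SUR=>LOCALLY}), but not a locally finite exhaustion -- an uncountable disjoint union of bounded polyhedra, for instance, is overconvergent and boundaryless yet has non-paracompact realisation. This does not damage your argument, since the comparison lemma only needs the image of $\mathbf{Aff}_H$ to \emph{cover} every such $\Delta$, and $B$ is in any case covered by convex (hence paracompact) charts; but the sentence as written overstates. Either drop the paracompactness assertion, or replace essential surjectivity onto $\mathrm{C}\mathbf{Poly}_H^{\not\partial/\mathrm{sur}}$ with the weaker (and sufficient) claim that every object there admits an overconvergent cover by objects of the form $c^*(U)$ with $U$ a convex open in $N(\R)$, which is exactly what the paper invokes.
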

\begin{proof}We have already seen that $c^*$ is fully faithful. It remains to show that the image of $\mathbf{Aff}_H$ is a site for the overconvergent topos. This is a consequence of corollary \ref{AFF_SUR} and the fact that $\mathbf{Aff}_H$ generates the overconvergent topology of any $H$-affine space $N$.\end{proof}

By composing all our functors, we obtain the fully faithful functor
\[ \mathbf{Aff}_H\rightarrow\mathbf{Rig}_K^{\not\partial/\mathrm{sur}} \]
that is the title of this document. The objects in the essential image of this functor are paracompact, quasi-separated, overconvergent, and boundaryless (which for obvious reasons, I have chosen not to record in the superscript). 

It realises an affine manifold $B$ as a rigid analytic space $B^\mathrm{rig}$, together with a (discontinuous) map $c:B\rightarrow B^\mathrm{rig}$ that descends to a homeomorphism on the overconvergent site.

\begin{cor}\label{AFF_AFF}The restriction of $c^*$ is a topological equivalence
\[ \mathbf{Aff}_H \cong \mathbf{Rig}_{\F_1(\!(t^{-H})\!)}^{\not\partial/\mathrm{sur}} \]  
between the category of (not necessarily Hausdorff or paracompact) $H$-affine manifolds and the category of boundaryless, overconvergent, rigid analytic spaces. 

Moreover, the following are equivalent:
\begin{enumerate}\item $X$ has affine diagonal;
\item $X$ is quasi-separated;
\item $\Delta_X(\R)$ is Hausdorff.\end{enumerate}\end{cor}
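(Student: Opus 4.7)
My plan is to derive the main equivalence by assembling results already in hand. Specifically, I would compose: (a) the equivalence $\Delta : \mathbf{Rig}_{\F_1(\!(t^{-H})\!)}^{\mathrm{ltf/n/nb}} \cong \mathrm{C}\mathbf{Poly}_H^N$ of Corollary \ref{AFF_COLL_THM}; (b) the criterion of Corollary \ref{AFF_SEPARATED} pinning down overconvergent normal analytic spaces as those whose collage has local-homeomorphism developing map, together with the observation that the boundaryless condition restricts the collage to bounded polyhedra; and (c) the preceding theorem identifying $\Sh\Poly_H^{\not\partial/\mathrm{sur}}$ with $\Sh\mathbf{Aff}_H$. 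The composition yields the claimed equivalence for the paracompact case. The extension to not-necessarily-Hausdorff or paracompact affine manifolds comes for free from passing to locally representable objects on either side: the $c^{-1}$ construction defining the geometric morphism does not require Hausdorffness in its source, and the matching category on the analytic side is simply the full subcategory of locally representable objects of $\Sh\mathbf{Rig}_K^{\not\partial/\mathrm{sur}}$, which tautologically includes non-quasi-separated objects.

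For the three-way equivalence, my plan is the standard cycle (i)$\Rightarrow$(ii)$\Rightarrow$(iii)$\Rightarrow$(i). The implication (i)$\Rightarrow$(ii) is immediate, since affine morphisms are quasi-compact. For (iii)$\Rightarrow$(i), I would show that in a Hausdorff affine manifold $B$ the intersection of two bounded polyhedra $\Delta_1, \Delta_2$ is again a bounded polyhedron: $\Delta_2$ is closed in $B$ (compact sets are closed in Hausdorff spaces), so $\Delta_1 \cap \Delta_2$ is a closed subset of the compact $\Delta_1$, hence compact. In any affine chart meeting the intersection, both polyhedra restrict to convex polyhedra in $\R^n$, so locally the intersection is convex; compactness plus Hausdorffness then lets one pick a single affine chart containing the whole intersection, exhibiting $\Delta_1 \cap \Delta_2$ as a polyhedron. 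Thus pairwise intersections of affine opens of $X = B^\mathrm{rig}$ are affine, which is affine diagonal.

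The essential obstacle is (ii)$\Rightarrow$(iii), for which I would argue the contrapositive. Assume $B$ is not Hausdorff; since $B$ is locally Euclidean (hence T$_1$), there exist $p \neq q$ with no disjoint open neighborhoods but with $\{p\}, \{q\}$ closed. Choose bounded polyhedra $\Delta_1 \ni p$, $\Delta_2 \ni q$ with $q \notin \Delta_1$ and $p \notin \Delta_2$; shrinking if necessary so that away from $\{p, q\}$ both polyhedra live in an identified chart, one produces a sequence $x_n \in \Delta_1 \cap \Delta_2$ converging to $p$ in $\Delta_1$ and to $q$ in $\Delta_2$ but having no limit in $\Delta_1 \cap \Delta_2 \subseteq B$. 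Combinatorially, this is exactly the situation of the line with two origins: $\Delta_1 \cap \Delta_2$ pulls back to a non-closed subset of an affine chart (a finite union of half-open bounded pieces, punctured at the gluing locus), which cannot be expressed as a finite union of closed polyhedra. By the covering criterion of Corollary \ref{AFF_COLL_THM} this forces $\Delta_1 \cap \Delta_2$ to be non-quasi-compact in $X$; but this set is the pullback of the diagonal $X \to X \times X$ along $\Delta_1 \times \Delta_2$, so the diagonal is not quasi-compact and $X$ is not quasi-separated. This is the only step requiring any genuine combinatorial argument beyond the abstract nonsense of the equivalences.
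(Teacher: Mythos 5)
Your plan reproduces the paper's route for the main equivalence: composing Corollary \ref{AFF_COLL_THM}, the criterion of Corollary \ref{AFF_SEPARATED}, and the theorem directly preceding the statement. Your (iii)$\Rightarrow$(i) is likewise essentially the paper's argument. One warning, which applies equally to the paper's own wording: the conclusion that $\Delta_1\cap\Delta_2$ is a \emph{polyhedron} is too strong. If $B$ is not simply-connected, the overlap of two polyhedra can have several connected components even though each fits in a chart (e.g.\ two large boxes in a flat $\Z$-affine torus, whose intersection breaks into four squares near a common corner). What you actually get is a \emph{finite disjoint union} of polyhedra. This is harmless --- a finite disjoint union is still affine by Lemma \ref{RIG_TATEPROD} --- but the phrase ``exhibiting $\Delta_1\cap\Delta_2$ as a polyhedron'' needs to read ``as a finite disjoint union of polyhedra, hence affine.''

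Where you genuinely diverge from the paper is (ii)$\Rightarrow$(iii). The paper does not prove this inside the proof block because it is already available: Theorem \ref{SEP_SUR} says a quasi-separated purely analytic space has Hausdorff $X^\mathrm{sur}$, and Theorem \ref{AFF_OVERTHM} identifies $X^\mathrm{sur}$ with $\Delta_X(\R_\infty)$; since $X$ is boundaryless (hence purely analytic), the implication is immediate. Your contrapositive via the line-with-two-origins is a valid alternative and has the virtue of being self-contained and making the combinatorial obstruction visible, but the step ``shrinking if necessary so that away from $\{p,q\}$ both polyhedra live in an identified chart'' is hand-waved; the tighter formulation is simply that $\Delta_1\cap\Delta_2$ fails to be compact in $B$ (the sequence $x_n$ has no accumulation point there), whence by the covering criterion of Corollary \ref{AFF_COLL_THM} it cannot be covered by finitely many polyhedra, so the fibre product $\Delta_1\times_X\Delta_2$ is not quasi-compact. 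Both routes work; the paper's is a one-line citation, yours costs a paragraph but buys a direct picture of where quasi-separation fails.
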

\begin{proof}Suppose that $\Delta(\R)$ is Hausdorff, and let $\Delta_1,\Delta_2\subseteq\Delta$ be two polyhedra. By the Hausdorff property, $\Delta_i(\R)$ is closed in $\Delta(\R)$.

There are overconvergent neighbourhoods $U_i$ of $\Delta_i$ such that $\delta:U_i\hookrightarrow N(\R)$ is an open immersion. The intersection $\Delta_2\cap U_1$ is closed in $U_1$ and contains $\Delta_1\cap\Delta_2$. It follows that the latter can be realised as an intersection of two polyhedra inside an affine space, and is therefore a polyhedron. This proves that $\Delta$ has affine diagonal.\end{proof}

\begin{eg}[Groups]\label{AFF_GROUPS}The inclusion $\mathbf{Rig}_K^\mathrm{ltf/n/nb}\hookrightarrow\mathbf{Rig}_K$ being left exact, it follows that an affine manifold with the structure of a \emph{group} induces a group structure on its rigid analytic space. Many motivating examples of toric rigid analytic spaces arise in this way.

For example, the rigid space associated to the affine manifold $H\subseteq\R$ is a group object $\G_m/K$ of $\mathbf{Rig}_K$, the \emph{multiplicative group} over $K$. More generally, to the affine manifold $\Hom(\Lambda,H)$, where $\Lambda$ is any lattice, one puts $\Hom(\Lambda,\G_m)$ for the \emph{diagonalisable group with character group $\Lambda$}. Note that in contrast to the algebraic setting, these diagonalisable groups are not quasi-compact.\end{eg}

\subsection{On the Mumford degeneration}Continuing on from the previous example, let us investigate proper, commutative group objects of $\mathbf{Rig}_{\F_1(\!(t^{-H})\!)}$. Under the correspondence of corollary \ref{AFF_AFF}, these are nothing more than compact $H$-affine group manifolds. 

Let $B$ be an $H$-affine group manifold. The model affine space $N_B$ at the origin is naturally a vector space. We will assume that $B$ is connected and complete, that is, that the developing map $\widetilde B\rightarrow N_B$ is an isomorphism. (The latter is probably automatic for compact affine manifolds with integer slopes.) 

Note that here, as above (remark \ref{torsion}), we are also implicitly restricting attention to the case where $K_B^\times$ is torsion-free. In geometric terms, this means that a base change to a field of characteristic zero is connected. Let us call this property \emph{geometrically connected}.

In this case, the universal covering gives a uniformsation
\[  B\cong N_B/Y \]
with $Y=\pi_1(B,0)$ a cocompact, discrete subgroup of $N_B(H)$; that is, a lattice. In particular, $B$ is an affine torus with circumferences in $H$.

This uniformisation translates onto the rigid analytic side; the covering $N_B$ corresponds, as in example \ref{AFF_GROUPS}, to a torus $T=N_B\tens\G_m$ with character group $X^*(T)=\Lambda_{B/H}$, and $Y$ to a subgroup of the $\F_1(\!(t^{-H})\!)$-points of $T$. We get the following commutative square of equivalent categories:
 \[\xymatrix{  {\left\{\begin{matrix}\text{proper, geometrically connected,} \\ \text{commutative groups in }\mathbf{Rig}_{\F_1(\!(t^{-H})\!)} \end{matrix} \right\}} \ar@{-}[r]^-{\text{cor. }\ref{AFF_AFF}}  \ar@{-}[d] & 
\left\{H\text{-affine tori }\right\}  \ar@{-}[d] \\
 {\left\{\begin{matrix}\text{ pairs $(T,Y)$ with $T$ an algebraic torus over} \\ \text{ $\F_1(\!(t^{-H})\!)$ and $Y\subseteq T(\F_1(\!(t^{-H})\!))$ a lattice with} \\ \text{$\mathrm{rk}(Y)=\mathrm{rk}(T)$ and $\sh O(T)\rightarrow\sh O(Y)$ injective}\end{matrix}\right\}} \ar@{-}[r] &
{\left\{\begin{matrix}\text{ pairs $(N,Y)$ of lattices with} \\ \text{$Y\subseteq N\tens H$ inducing an} \\ \text{isomorphism $Y\tens H\cong N\tens H$}\end{matrix}\right\}}
}\]

The intuition behind this result is essentially the same as that of Mumford's seminal paper \cite{Mumford}, but expressed in a somewhat different language. Here I include a few paragraphs by way of translation; my notation follows that of \cite{FaltingsChai}.

Let $K$ be a non-Archimedean field, $T$ a split algebraic torus of rank $n$ over $K$. We also fix a free $\Z$-module $Y\simeq\Z^n$ of the same rank. In \cite{FaltingsChai}, the authors distinguish three additional sets of data defining the completely degenerate Abelian variety $B_K:=T/Y$:
\begin{enumerate}
\item a pairing $b:X^*(T)\tens Y\rightarrow K^\times$;
\item a homomorphism $\phi:Y\rightarrow X^*(T)$;
\item a `quadratic' function $a:Y\rightarrow K^\times$.\end{enumerate}
In more geometric terms, these data correspond to:
\begin{enumerate}\item the `period' lattice $b^\tau:Y\hookrightarrow T$;
\item a cocycle $\phi$ for a $\theta$-line bundle on $T/Y$;
\item a metric on $L$.\end{enumerate}
We have already seen the lattice $Y$ appearing in our $\F_1$-construction. Let us choose a section $H\rightarrow K^\times$ of the valuation, and suppose that $b$ takes values in the image of this section. Then the analytic space $B_K$ is obtained by base change from the $\F_1(\!(t^{-H})\!)$-analytic space associated to an affine torus $B=(X_*(T)\tens H)/Y$ along the induced map $\F_1(\!(t^{-H})\!)\rightarrow K$, and we can make the identification $X^*(T)\cong\Lambda_{B/H}$.

I would like to argue that the $\theta$-bundle is also defined at this level. Indeed, the very fact that the cocycle $\phi$ is takes values in the character group, rather than arbitrary Laurent polynomials, implies in particular that $L$ is a monomial line bundle. More precisely, $\phi$ defines an element of
\[ \Pic\left(B/{\F_1(\!(t^{-H})\!)}\right) \quad \cong \quad H^1(B,\Aff(B,H)) \quad \cong\quad\Hom(Y,H)\oplus \Hom(Y,\Lambda_{B/H}), \]
where we use the origin of $N$ to split the cotangent sequence $\Aff(N,H)\cong H\oplus \Lambda_{B/H}$. The fact that it even appears an element of the second factor $\Hom(Y,\Lambda_{B/H})\cong\Hom(Y,X^*(T))$ implies that it is \emph{metrisable}. I do not discuss the metric itself here.

I conclude this section by highlighting how the `geometric' perspective of this paper stands up to the full generality of \cite{FaltingsChai}:
\begin{itemize}\item Using the technology presented in this paper as written, it is only possible to get Abelian varieties with `monomial structure constants', that is, such that $Y$ acts on $T$ with co-ordinates powers of the uniformiser. It is not difficult to reintroduce general $K^\times$ structure constants into the mix, but we defer that to a later project.
\item The theta series cannot be defined directly over $\F_1(\!(t)\!)$ since they involve addition. In other words, the $\theta$-bundles have no sections until they are pulled back to a base where addition is defined.
\item One can also study families over a higher-dimensional base, but one will lose the interpretation in terms of affine manifolds.
\item The affine manifolds perspective does not, it seems, offer any insight into the Abelian part of the Raynaud extension.
\end{itemize}

\subsection{Rigid analytic spaces as twisted affine manifolds}

One can also cook up an object that represents the overconvergent site of a purely analytic rigid space even without a trivialisation of the normal bundle to the punctures. The following discussion is informal.

Let $X$ be paracompact, and pick a relatively normal model $X^+$. Then
\[ \Sigma_{X^+}(H)=X\left(\F_1(\!(t^{-H})\!)\right) \]
is a fan. Without the base field $K$ to anchor it, the set of $H$-points has a scaling degree of freedom. The topological realisation $\Sigma_{X^+}(\R)$ depends only on $X$, so we write simply $\Sigma_X(\R)$.

As manifolds,
\[ X^\mathrm{sur}\simeq \Sigma_{X^+}(\R)/\R^\times_{>0} \]
so $\Sigma_X(\R)$ can be considered as the bundle of positive rays in an \emph{oriented real line bundle} $\ell^\vee$ on $X^\mathrm{sur}$. The fibre of $\ell$ over any rational (resp. integral) point of $X^\mathrm{sur}$ comes with a set of rational (resp. integral) points respected by the structure group.

The locally constant sheaf $K_X^\times$ can no longer be considered as a subsheaf of $C^0(X^\mathrm{sur},\R)$. Rather, it is a sheaf of functions
\[ \ell^\vee\rightarrow \R\]
respecting the integral structures, that is, of \emph{affine sections} of the dual line bundle $\ell$. The zero section of $\ell$ is the global element $1\in\Gamma K_X^\times$, and a non-vanishing global affine section of $\ell$ corresponds to a choice of structural morphism $X\rightarrow\Spec\F_1(\!(t^{-\Q})\!)$.

\begin{thm}Let $X$ be a paracompact, boundaryless, purely analytic, locally Noetherian rigid analytic space over $\F_1$. Suppose that $X$ is formally overconvergent over $\F_1$.

Then $X^\mathrm{sur}$ is a manifold, and there is a canonical oriented real line bundle $\ell$ on $X^\mathrm{sur}$ and embedding $K_X^\times\hookrightarrow \Gamma(\ell/X)$ such that locally, these structures are isomorphic to those on an affine manifold with $\ell=\R$ constant. Moreover, $X$ can be recovered from the data $\ell/X^\mathrm{sur},K_X^\times\hookrightarrow\Gamma(\ell/X)$.\end{thm}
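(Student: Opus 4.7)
The plan is to reduce this statement to the already-established correspondence between affine manifolds and boundaryless overconvergent spaces over $\F_1(\!(t^{-H})\!)$ (corollary \ref{AFF_AFF}), and to account for the absence of a structural morphism to such a base field by a line-bundle twist. Since $X$ is paracompact it admits enough models; by the locally Noetherian and pure analyticity hypotheses we may pick a relatively normal, locally Noetherian formal model $X^+$. The formal overconvergence over $\F_1$ ensures, via the results of part I, that the punctured cone complex $\Sigma_{X^+}$ is a manifold-with-no-boundary; in particular it is $\R^\times_{>0}$-locally trivial under the cone scaling action, so
\[
\R^\times_{>0}\rightarrow\Sigma_{X^+}(\R)\rightarrow X^\mathrm{sur}
\]
is a principal $\R^\times_{>0}$-bundle whose associated (canonically oriented) real line bundle we take to be $\ell^\vee$. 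Independence of the choice of relatively normal $X^+$ is immediate from the description of such models as polyhedral subdivisions of the fan: subdivision does not change $\Sigma_{X^+}(\R)$ as a topological cone bundle.

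Next I would construct the embedding $K_X^\times\hookrightarrow\Gamma(\ell/X^\mathrm{sur})$. A local section $f$ of $K_X^\times$ evaluates on any point of $\Sigma_{X^+}$ to give a linear function on the corresponding ray, varying affinely between rays; this is exactly an affine section of the dual bundle $\ell$, i.e.\ a section of the affine-space torsor over $X^\mathrm{sur}$ modelled on $\ell$. The zero section is the unit $1\in\Gamma K_X^\times$, and the integer/rational structure on the fibres of $\ell^\vee$ is recovered as the locus where $K_X^\times$ takes values in $\Z\subseteq\R$ (resp.\ $\Q$). For local triviality, I would observe that every point of $X^\mathrm{sur}$ has a qcqs overconvergent affine neighbourhood $U$ whose model is a single cone complex $\Sigma_U$ immersed in an affine space of finite type over $\F_1$; picking any rational ray in $\Sigma_U$ splits $\R^\times_{>0}\rightarrow\Sigma_U(\R)\rightarrow U^\mathrm{sur}$ and hence trivialises $\ell|_U\cong U^\mathrm{sur}\times\R$. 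Under such a trivialisation the fibre becomes $\F_1(\!(t^{-H})\!)$ for an appropriate $H$, and $K_X^\times|_U\hookrightarrow\Gamma(\ell|_U)$ reduces to the sheaf of $H$-affine functions on an $H$-affine-manifold chart, matching the picture of corollary \ref{AFF_AFF}.

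The hard step will be \emph{recoverability} of $X$ from the data $(\ell/X^\mathrm{sur},\,K_X^\times\hookrightarrow\Gamma(\ell/X^\mathrm{sur}))$. The idea is to invert the construction: for each subgroup $H\subseteq\R$ define a sheaf of sets on $X^\mathrm{sur}$ by
\[
\Sigma_X(H):=\bigl\{\,v\in\ell^\vee\ :\ \langle v,f\rangle\in H\text{ for every local section }f\text{ of }K_X^\times\,\bigr\},
\]
and equip its oriented positive part with a cone structure from the orientation of $\ell^\vee$. Local triviality identifies this sheaf with a punctured cone complex in the sense of part I, and the lattice of affine subspaces inside $\Sigma_X(H)$ — together with the compatibility between scalings — recovers a relatively normal affine Noetherian model locally on $X^\mathrm{sur}$; glueing these and passing to Riemann--Zariski spaces reconstructs $X$. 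The subtlety is that the pair $(\ell,K_X^\times)$ only tracks units, so one must check that the integrality ($\sh O^+_X\subseteq\sh O_X$) is reconstructible: this follows because $K_X^{\times,+}$ is exactly the submonoid of sections of $K_X^\times$ that are non-positive on the chosen positive cone of $\ell^\vee$, and the orientation of $\ell^\vee$ canonically specifies this cone. The manifold property of $X^\mathrm{sur}$ (hence the lack of hidden specializations) together with boundarylessness of $X$ guarantee that these local reconstructions glue uniquely, completing the equivalence.
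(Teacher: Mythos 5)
The paper gives no formal proof of this theorem: the preceding discussion is explicitly labelled ``informal'', the theorem is stated without a \verb|proof| environment, and the remarks after it move directly to an example. Your argument therefore has nothing in the paper to compare against line-by-line; what you are really doing is fleshing out the paper's sketch into a proof, and the overall route --- construct $\ell^\vee$ as the line bundle associated to the principal $\R^\times_{>0}$-bundle $\Sigma_X(\R)\to X^\mathrm{sur}$, read $K_X^\times$ as a subsheaf of sections of $\ell$ respecting the integral structure, show local triviality by splitting the cone with a rational ray, then reconstruct the fan $\Sigma_X(H)$ and hence all relatively normal models from the pair $(\ell,K_X^\times)$ --- is faithfully the one the paper gestures at. The recoverability paragraph, in particular, is genuinely content you supply that the paper does not.

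Two places deserve more care. First, your argument takes for granted the homeomorphism $X^\mathrm{sur}\simeq\Sigma_{X^+}(\R)/\R^\times_{>0}$. This is an absolute (over $\F_1$) version of theorem~\ref{AFF_OVERTHM}, which is only stated and proved in the paper for spaces over a valuation $\F_1$-field $K=\F_1(\!(t^{-H})\!)$, where $\Delta_{X/H}$ is a slice of $\Sigma_{X^+}$. The paper itself asserts this identification without proof; since it is the foundation of the whole theorem, you should either cite it as an assumption inherited from the informal discussion or indicate how the proof of theorem~\ref{AFF_OVERTHM} (the adjunction $c^{-1}\dashv c_*$ on open set lattices, plus proposition~\ref{AFF_MODIFICATION}) transports to the quotient-by-scaling picture. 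Second, in the recoverability step the definition $\Sigma_X(H)=\{v\in\ell^\vee:\langle v,f\rangle\in H\;\forall f\}$ gives you only the $H$-points of the total space; you still have to say how the \emph{fan structure} (the cone decomposition giving a particular relatively normal model) is read off, and why varying over models recovers the full Riemann--Zariski limit. You gesture at this (``the lattice of affine subspaces inside $\Sigma_X(H)$ \dots recovers a \dots model locally''), but the correct statement is closer to: the canonical relatively normal affine model at $p\in X^\mathrm{sur}$ has a single cone, namely the positive part of $\ell_p^\vee$ paired against $K_{X,p}^{\times,+}$; all other models are the locally finite rational subdivisions; and the filtered limit over these, by~\S\ref{RIG_RZ}, is $X$. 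Boundarylessness is what lets you pass from $K_X^\times$ to $\sh O_X\setminus 0$ and from $K_X^{\times,+}$ to $\sh O_X^+\setminus 0$, and you identify that correctly. With those two clarifications spelled out the argument is sound and, as far as I can tell, more complete than anything the paper itself provides.
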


The line bundle $\ell$, including its affine structure, computes the topological type of the normal bundle to the boundary in a log smooth model of $X$. 

\begin{eg}[Hopf map]Let $S^3$ be the punctured formal completion of the origin in $\A^2$. Identifying the blow-up of the plane at the origin with $\sh O_{\P^1}(-1)$, we obtain a `Hopf fibration' 
\[ S^3\rightarrow \P^1  \]
 the fibre over any $k$-point, with $k$ an $\F_1$-field, being isomorphic to $\Spec k(\!(t)\!)$.

The punctured fan associated to the blow-up of $\hat \A^2$ is the lower quadrant in $\R^2$ divided into two cones by the negative diagonal, so $(S^3)^\mathrm{sur}$ is homeomorphic to a closed interval. Of course, both $\ell$ and $K_X^\times$ are topologically trivial. However, there is no \emph{affine} trivialisation of $\ell$: any element of $K_X^\times=\Z^2$ must annihilate some ray in $\R^2_{\leq0}$ and hence, under the embedding $K_X^\times\hookrightarrow\ell$, vanish at the corresponding point of $(S^3)^\mathrm{sur}$. 

In fact, a non-zero affine section of $\ell$ vanishes at exactly one point. This computes the Euler class, $1$, of the conormal bundle $\sh O(1)$ to $\P^1$. I leave it to the reader to imagine his own generalisations.\end{eg}


\section{De $\F_1(\!(t)\!)$ \`a $\C$}\label{C}
We already showed that there is a base change functor from the category of rigid analytic spaces over $\F_1(\!(t)\!)$ to any non-Archimedean field endowed with a topological nilpotent. Remarkably, one can also base change to \emph{Archimedean} fields: there is a family of continuous monoid homomorphisms
\[ \F_1(\!(t)\!)\rightarrow\C,\quad t\mapsto q \]
indexed by $q=e^\epsilon$ in the punctured open unit disc $\Delta^*\subset\C^\times$. The monoid $\F_1(\!(t)\!)$ does not distinguish between `Archimedean' and `non-Archimedean' topologies.

Although the formalisms are not immediately compatible, this can nonetheless be globalised to obtain a base change functor into the category of complex analytic spaces, in generalisation of the construction
\[ B\mapsto TB/\epsilon\Lambda^\vee  \]
of torus fibrations described in the introduction to part I.

Let $S$ be a set with a $\Z$-indexed increasing filtration $F$, and define a norm
\[ |\!|(c_f)_{f\in S}|\!| :=\sum_{k\in\Z}\sum_{f\in F_kS\setminus F_{k-1}S}|q^kc_f| \]
on the $\C$-vector space $\bigoplus_S\C$. Its completion is the Banach space
\[ \ell^1_q(S,F):= \left\{(c_f)_{f\in S}\in\C^S\left| |\!|(c_f)|\!|  <\infty\right.\right\}  \]
of absolutely $q$-summable $S$-indexed sequences.

If $A$ is a Banach $\F_1(\!(t)\!)$-algebra, then $A\hat\tens_q\C:=\ell^1_q(A,\mathrm{ord}_t)$ is a Banach $\C$-algebra with respect to the projective tensor product. 

\begin{lemma}Suppose $A$ is reduced and of finite type. Then $A\hat\tens_q\C$ is the ring of holomorphic functions on $\Spec A(\C)$.\end{lemma}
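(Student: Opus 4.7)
\emph{Proof proposal.} The plan is a three-step reduction: first to the case $A = \F_1(\!(t)\!)\{\Delta\}$ using the polyhedron--collage equivalence; then to an explicit identification of $\Spec A(\C)$ as a Reinhardt region in complex affine space; and finally to a match of function rings via Laurent expansion.

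\emph{Reduction.} A reduced, finite-type Banach $\F_1(\!(t)\!)$-algebra embeds integrally into its relative normalisation, which by the affine case of Corollary \ref{AFF_COLL_THM} (applied to the finitely many irreducible components) is a product $\prod_i \F_1(\!(t)\!)\{\Delta_i\}$ of algebras associated to strongly convex embedded polyhedra. Both the functor $(-)\hat\tens_q\C$ and the formation of the ring of holomorphic functions on $\Spec(-)(\C)$ are compatible with finite products and with admissible integral extensions (after base change these become finite integral extensions of Banach $\C$-algebras, which match on the analytic side). So it suffices to treat $A = \F_1(\!(t)\!)\{\Delta\}$ for a single embedded strongly convex polyhedron $\Delta\subseteq N$.

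\emph{The analytic space.} By the description of $\Delta$-points in \S\ref{POLY_NONPOS}, a $\C$-point of $\Spec A$ compatible with $t\mapsto q$ is the same data as a homomorphism of pairs $(\sh O\{\Delta\};\sh O^+\{\Delta\})\to(\C;\C^\circ)$ sending $t$ to $q$. Splitting such a homomorphism into its modulus and phase components, the modulus gives an $\R$-point of the rescaled polyhedron $\log|q|\cdot\Delta\subseteq N(\R)$, while the phase gives a character $\Lambda_{N/\Z}\to U(1)$. Hence $\Spec A(\C)$ is naturally identified with the Reinhardt region
\[ \Delta_q \;=\; \bigl\{z\in\C^{\Lambda_{N/\Z}} \,:\, \log|z|\in\log|q|\cdot\Delta\bigr\}, \]
embedded in a complex affine space via a set of generators of $\Aff^+_\Delta(N,\Z)$.

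\emph{Matching the function rings.} On $\Delta_q$ one has $|z^F|=|q|^{F(v)}$ for $v\in\Delta$, so the $t$-adic filtration $\mathrm{ord}_t(z^F)=\sup_\Delta F$ of Example \ref{POLY_VAL} agrees with the $q$-size of the monomial $z^F$. The defining summability of $\ell^1_q(A,\mathrm{ord}_t)$ then reads $\sum_F |q|^{\sup_\Delta F}|c_F|<\infty$, which is precisely the condition that the Laurent series $\sum_F c_F z^F$ converges absolutely and uniformly on $\Delta_q$. Classical Reinhardt theory, applied to the strongly convex polyhedral region $\Delta_q$, gives the inverse: every holomorphic function on $\Delta_q$ has a unique such Laurent expansion with $\ell^1_q$-summable coefficients. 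This identifies $A\hat\tens_q\C$ with the ring of holomorphic functions on $\Spec A(\C)$.

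\emph{Main obstacle.} The substantive content is in the third step, where one must produce genuinely $\ell^1$-summable Laurent coefficients from a holomorphic function, rather than the weaker square- or sup-summability. The standard argument uses that $\Delta$ has only finitely many vertices (\S\ref{POLY_FACE}) to give a uniform comparison between $\ell^1$- and sup-norms of monomials across all of $\Aff_\Delta(N,\Z)$; one then invokes Cauchy's estimates over a family of polyradii exhausting $\Delta_q$. If $\Delta$ is unbounded, a further check is needed along the faces at infinity (\S\ref{POLY_BOUNDARY}) to see that no holomorphic function is lost along the non-compact directions, and that the Banach norm is computed as the appropriate projective limit over the bounded face stratification.
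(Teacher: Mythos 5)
Your proposal takes a genuinely different route from the paper's proof, but it also leaves a genuine gap that you yourself flag at the end.

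The paper's proof is direct: choose a surjection $\F_1[\![t]\!]\{x_i\}_{i=1}^k\twoheadrightarrow A^+$, which embeds $\Spec A^?(\C)$ into affine space so that the $t$-adic filtration on monomials corresponds to their sup-norm on the (scaled) unit polydisc; by the maximum principle that sup-norm is computed on the unit torus, and then, since $A$ is reduced, $A^?\tens_q\C$ is the ring of polynomial functions on $\Spec A^?(\C)$, whose completion on the compact set $\Spec A(\C)$ is identified with the holomorphic functions. There is no reduction to the polyhedral case, no Reinhardt theory, and no Laurent expansion: the argument runs entirely through the sup-norm on a compact set and density of polynomials, which is where the "finitely many vertices" issue you anticipate simply does not arise.

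Your first step is both more elaborate than necessary and insufficiently justified. Reducing to $A = \F_1(\!(t)\!)\{\Delta\}$ via the normalisation $A\hookrightarrow\prod_i\F_1(\!(t)\!)\{\Delta_i\}$ requires you to know that both $(-)\hat\tens_q\C$ and the formation of holomorphic function rings commute with admissible integral extensions. That is a real assertion: $\Spec A(\C)$ need not be normal even when $A$ is reduced, so $\sh O(\Spec A(\C))$ is not automatically the ring of functions on $\Spec A^\nu(\C)$ pulled back from the base, and matching this with $A\hat\tens_q\C\subseteq A^\nu\hat\tens_q\C$ requires an argument you don't give. The paper bypasses this entirely by working directly with the surjection from a free algebra.

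The more serious issue is the one you name as the "main obstacle" and then do not close. Producing $\ell^1_q$-summable Laurent coefficients from a function merely holomorphic on the closed Reinhardt region $\Delta_q$ is not a consequence of Cauchy estimates plus "finitely many vertices": Cauchy estimates give $|q|^{\sup_\Delta F}|c_F|\leq\|f\|_{L^\infty}$, which is a uniform sup-bound on coefficients, not absolute summability. On the closed annulus, for instance, the Wiener algebra (functions with $\ell^1$-summable Laurent coefficients) is a proper subalgebra of the algebra of functions holomorphic in a neighbourhood, and no finiteness of the combinatorics fixes this. Your sketch, as written, therefore proves the easy inclusion $A\hat\tens_q\C\hookrightarrow\sh O(\Spec A(\C))$ and identifies but does not prove the surjectivity. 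Whatever resolution is intended has to make precise what "holomorphic functions on the compact set $\Spec A(\C)$" means (the paper settles this by taking the uniform closure of polynomials) and must then reconcile that with the projective-tensor $\ell^1_q$-norm; neither your proposal nor, for that matter, the paper's own terse sentence spells this identification out, but the paper at least organises the proof so that only one such identification is needed, whereas your route adds the additional burden of descent along normalisation.
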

\begin{proof}Let us choose a surjection $\F_1[\![t]\!]\{x_i\}_{i=1}^k\twoheadrightarrow A^+$. This embeds the affine algebraic variety $\Spec A^?(\C)$ into an affine space $\A^k_\C$ in such a way that the $t$-adic norm on $A^+$ is the $\mathrm{L}^\infty$-norm on the unit polydisc of $\A^k_\C$. By the maximum principle, this is the same as the $\mathrm{L}^\infty$-norm on the unit torus. This is calculated by the formula for $|\!|-|\!|$ introduced above.

Since $A$ is reduced, the discrete $\C$-algebra $A^?\tens_q\C$ is exactly the set of polynomial functions on $\Spec A^?(\C)$. Thus $A\hat\tens_q\C$ is the completion of the space of polynomial functions for the $\mathrm{L}^\infty$-norm on the intersection $\Spec A(\C)$ of $\Spec A^?(\C)$ with the closed unit polydisc. Since this space is compact, the induced topology is the topology of uniform convergence, and hence the completion is the space of holomorphic functions.\end{proof}

Suppose that $\Spec A\subseteq\Spec B\subseteq V$ are affine open immersions, and that $\Spec B$ is an overconvergent neighbourhood of $\Spec A/V$. By finiteness of global sections over blow-ups, $B\hat\tens_q\C\rightarrow A\hat\tens_q\C$ is a nuclear operator. It follows that any non-constant holomorphic function from $\Spec B(\C)$ into the closed unit disc maps $\Spec A(\C)$ into the open unit disc, that is, the latter is contained in the topological interior of the former.

If $U\subseteq\Spec A$ is an overconvergent open subset, then $U(\C)$ is a complex analytic space without boundary. Indeed, by corollary \ref{SEP_SUR_OPEN}, overconvergence means that every compact subset is in the topological interior of a larger compact subset.

The condition of local convexity (def. \ref{SEP_COMPACT}) being overconvergent-local, we can define the site $\mathbf{Rig}_{\F_1(\!(t)\!)}^\mathrm{sur/lc}$ of locally convex, overconvergent analytic spaces over $\F_1(\!(t)\!)$. It is generated by overconvergent spaces $U$ admitting an open immersion into an affine object $\Spec A$.\footnote{For $\F_1(\!(t^{-H})\!)$, one can show with a little effort that in fact all overconvergent spaces are locally convex, but we will not make that digression here.} To each such space, we have associated a complex analytic space $U(\C)$. Following general principles, this extends to a spatial base change
\[ (-)\times_q\Spec\C:\mathbf{Rig}_{\F_1(\!(t)\!)}^\mathrm{sur/lc}\rightarrow\mathbf{An}_\C \]
to the category of (possibly non-reduced) complex analytic spaces. One therefore has in general a map
\[ \mu:X\times_q\Spec\C\rightarrow X^\mathrm{sur} \]
of spaces equipped with sheaves of topological monoids that specialises, in the case that $X$ is boundaryless, to the construction of analytic torus fibrations described in the introduction.

\appendix
\section{On the typing of points of rigid analytic spaces}\label{APPEND}
Berkovich famously classified points on analytic curves $C$ over a non-Archimedean field in to four `types'. With a small modification, it is easy to extend this to Huber's framework and thus understand all points of the Riemann-Zariski space $\mathrm{RZ}(C)$ (\S\ref{RIG_RZ}). However, the generalisation to higher dimensions is somewhat more complicated.

For analytic spaces of finite type over $\F_1(\!(t)\!)$, the situation is sufficiently simple that we can give a complete solution. The pathological type IV points do not appear.

\subsection{Types I and III components}

Let $\Delta$ be a rational polyhedron. We have constructed in \S\ref{AFF_OVER} a continuous map
\[ b:X_\Delta\rightarrow\Delta(\R_\infty) \]
where $X_\Delta$ is the rigid analytic space over $\F_1(\!(t)\!)$ associated to $\Delta$, with a discontinuous section $c$.

Let $x\in X_\Delta$. The following invariants are visible at the level of its image $bx$:

\begin{defns}
The \emph{height}, or \emph{type I codimension} or \emph{component}, of $x$ is the codimension of the (smallest) infinite face $\Delta_x$ of $\Delta$ containing $x$.

The \emph{rational rank}, or \emph{type III component}, is the dimension of the smallest rational subspace of $\Delta$ that contains $x$. Equivalently, it is the rank over $\Q$ of the linear map
\[ \Aff(\Delta_x,\R)\tens_\Z\Q \rightarrow \R \]
induced by $bx$.

If the height, resp. rational rank of $x$ equals the dimension of $\Delta$, we say that $x$ is \emph{purely of type I} (resp. \emph{III} or \emph{irrational}). Note that $\Delta$ can have at most one purely type I point: its infinite vertex.
\end{defns}

Intuitively, if $x$ has type I (resp. III) component $d_\mathrm{I}$ (resp. $d_\mathrm{III}$), it means that under a suitable co-ordinate system we may write
\[ x = (-\infty,\cdots,-\infty|q_1,\cdots,q_\ell|r_1,\cdots,r_{d_\mathrm{III}}) \]
with the first $d_\mathrm{I}$ co-ordinates equal to $-\infty$, the last $d_\mathrm{III}$ co-ordinates $r_i$ relatively irrational, and each $q_i\in\sum_{j=1}^{d_\mathrm{III}}r_j\Q$.

\subsection{Type II component}

Let $x\in\Delta$. We may eliminate the types I and III components of $x$ by replacing $\Delta$ with the quotient of $\Delta_x$ by the rational span of $x$; the image $x_\mathrm{II}$ of $x$ in this quotient is called the \emph{type II component} of $x$. If $x=x_\mathrm{II}$ we say it is or \emph{purely of type II}. For describing the image $bx$ in $\Delta(\R_\infty)$, we may also replace the morpheme `type II' with \emph{rational}.

In this section we will enumerate the additional data required to lift a point $y\in\Delta(\R_\infty)$ to $\Delta$; since these data will depend only on the component $y_\mathrm{II}$, it suffices to suppose that $y$ is purely of type II.

\begin{defn}Let $y\in\Delta(\R)$ be purely of type II. An \emph{oriented $d_\mathrm{II}$-flag at $y$} is a complete flag $\rho_\bullet$ of half-spaces of the conormal cone $\Lambda^\vee_{\Delta,y}$ of dimensions up to $d_\mathrm{II}\in\N$; that is, $\rho_1$ is a ray (pointing into $\Delta$) and $\rho_{i+1}$ is a half-space with boundary the hyperplane $\pm\rho_i$.\end{defn}

It makes sense to ask if a jet in $N(\R)$ at $y$ - in the sense of differential geometry - lifts a specified flag $\rho$. A flag $\rho_\bullet$ is contained in a given polyhedron $\Delta$ if and only if there is a jet lifting $\rho_\bullet$ that points into $\Delta(\R)$.

An oriented flag is equivalent to the data of a flag of half-spaces in the dual space $\Lambda_{\Delta,y}$ of \emph{codimensions} up to $d_\mathrm{II}$. Such a flag can be packaged in terms of the associated total order on $\Lambda/\rho^\perp_{d_\mathrm{II}}$ whose convex composition factors are $\pm\rho_i^\perp/\rho_{i+1}^\perp\cong\Z$ with the obvious ordering.

\begin{prop}The set $\sh J_y\Delta$ of oriented flags of $\Delta$ at $y$ is the set of surjective group homomorphisms
\[ v:\mathrm{Aff}_\Delta(N,\Z)\rightarrow\Gamma \]
into totally ordered groups $\Gamma$ such that $v(F)\leq 0$ when $F\leq 0$ in a neighbourhood of $y$ in $\Delta(\R)$.\end{prop}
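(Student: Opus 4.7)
The plan is to establish the bijection via mutually inverse constructions $\Phi$ (flag $\to$ valuation) and $\Psi$ (valuation $\to$ flag), using the theory of convex subgroups of totally ordered abelian groups.

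In the forward direction $\Phi$, given an oriented flag $\rho_\bullet$ of length $d_\mathrm{II}$ at $y$, let $\ell_i \subseteq \Lambda_{N/\Z}^\vee \tens_\Z \R$ be the real linear span of $\rho_i$ and $\ell_i^\perp \subseteq \Lambda_{N/\Z}$ its annihilator. I would define
\[ v(F) := \bigl(F(y),\ dF \bmod \ell_{d_\mathrm{II}}^\perp\bigr) \in \Q \oplus_{\mathrm{lex}} \bigl(\Lambda_{N/\Z}/\ell_{d_\mathrm{II}}^\perp\bigr), \]
with $\Q$ as the dominant (top) factor, the second factor carrying the flag total order described in the paragraph preceding the proposition, and $\Gamma$ the image. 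Positivity then splits into two cases: if $F \leq 0$ near $y$ and $F(y) < 0$, then $v(F) < 0$ outright; while if $F(y) = 0$, then $dF$ pairs non-positively with the tangent cone $\Lambda^\vee_{\Delta,y}$ and in particular with each $\rho_i$, and the resulting sign inequalities $dF \cdot \rho_i \leq 0$ force $dF \bmod \ell_{d_\mathrm{II}}^\perp \leq 0$ in the flag order.

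For the inverse $\Psi$, I would first use positivity applied to constants and to affine perturbations $\pm 1 + \xi$ (for any integral linear $\xi$) to deduce $v(1) > 0$, on pain of $v \equiv 0$ contradicting surjectivity onto a nontrivial $\Gamma$. Let $\Gamma^{\mathrm{inf}} \subset \Gamma$ be the maximal convex subgroup not containing $v(1)$; the quotient $\Gamma/\Gamma^{\mathrm{inf}}$ is then archimedean and embeds into $\R$ by H\"older's theorem. The same perturbation argument, applied now to $F = \xi \cdot x + c$ with $c \in \Z$ large enough to control the sign of $F(y)$, pins down the composite $\mathrm{Aff}_\Delta(N,\Z) \to \Gamma/\Gamma^{\mathrm{inf}} \hookrightarrow \R$ as a positive scalar multiple of $\mathrm{ev}_y$. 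Consequently $v$ sends $\ker(\mathrm{ev}_y)$ onto $\Gamma^{\mathrm{inf}}$, yielding after passage to saturation in $\Lambda_{N/\Z}$ an induced homomorphism $\bar v$ with image $\Gamma^{\mathrm{inf}}$. The chain of convex subgroups of $\Gamma^{\mathrm{inf}}$ pulls back to a filtration $\Lambda_{N/\Z} = \ell_0^\perp \supsetneq \cdots \supsetneq \ell_{d_\mathrm{II}}^\perp$, and the flag is recovered by taking $\rho_i$ to be the half-space dual to $\ell_i^\perp/\ell_{i-1}^\perp$, oriented by the inherited $\Z$-ordering.

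The compositions $\Phi \circ \Psi$ and $\Psi \circ \Phi$ are then a matter of unwinding, conditional on the geometric verification that each $\rho_i$ produced by $\Psi$ actually lies in the tangent cone $\Lambda^\vee_{\Delta,y}$: if a generator of $\rho_i$ were not in that cone, one could exhibit $F \in \mathrm{Aff}_\Delta(N,\Z)$ with $F \leq 0$ near $y$, $F(y) = 0$, and $dF \cdot \rho_i > 0$, contradicting the positivity of $v$. I expect this tangent-cone translation, together with the careful bookkeeping of index-finite sublattices of $\Lambda_{N/\Z}$ introduced by passing through $\ker(\mathrm{ev}_y)$, to be the main obstacle; the remaining algebra reduces to the classical Hahn embedding theorem, under which every finite-rank totally ordered abelian group is a lex extension of archimedean composition factors embedded in $\R$, ensuring that $\Gamma$ is fully reconstructed from the filtration extracted above.
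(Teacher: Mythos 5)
The paper gives no proof of this proposition, so I am evaluating your argument on its own terms rather than against the author's.

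Your forward map $\Phi$ and the identification of $\Gamma/\Gamma^{\mathrm{inf}}$ with a scalar multiple of $\mathrm{ev}_y$ via the squeeze argument are both fine. The genuine gap is in $\Psi$, and it is not the lattice-bookkeeping you flag as ``the main obstacle'' --- it is the step where you pull back the chain of convex subgroups of $\Gamma^{\mathrm{inf}}$ and then declare each $\rho_i$ to be ``the half-space dual to $\ell_{i-1}^\perp/\ell_i^\perp$, oriented by the inherited $\Z$-ordering.'' That declaration presupposes $\ell_{i-1}^\perp/\ell_i^\perp\cong\Z$, i.e.\ that every archimedean step of $\Gamma^{\mathrm{inf}}$ has rational rank one. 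Nothing in the hypotheses forces this. Take $\Delta$ a $2$-dimensional bounded polyhedron with $y=0$ interior, $\Lambda_{N/\Z}=\Z^2$, and
\[
v\colon \Z\oplus\Z^2\longrightarrow \Z\oplus_{\mathrm{lex}}\bigl(\Z+\Z\sqrt{2}\bigr),
\qquad (c,\xi)\longmapsto \bigl(c,\ \xi_1+\xi_2\sqrt{2}\bigr).
\]
This is surjective onto its image, and the positivity condition holds because $F\leq 0$ near an interior point forces $F(y)<0$ or $F\equiv 0$. The corresponding filter of polyhedra (those containing an initial segment of the irrational ray $\R_{\geq0}(1,\sqrt{2})$) is prime and lies in $b^{-1}(0)$, so the proposition must account for it. But here $\Gamma^{\mathrm{inf}}=\Z+\Z\sqrt{2}$ is already archimedean with a single convex subgroup $0$, so $d_{\mathrm{II}}=1$, yet $\ell_1^\perp=\ker\bar v=0$ has corank $2$ in $\Lambda$, not $1$. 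Your recipe would ask for a ``half-space dual to'' $\Z^2$, and there is no canonical ray. The flag element $\rho_1$ in this case is the \emph{irrational} ray $\R_{\geq0}(1,\sqrt2)\subset\Lambda^\vee\tens\R$, which is invisible to the filtration $(\ell_i^\perp)$ alone.

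What actually determines the ray at step $i$ is the H\"older embedding $\Gamma_{i-1}/\Gamma_i\hookrightarrow\R$ (unique up to positive scalar): the composite $\ell_{i-1}^\perp\to\Gamma_{i-1}/\Gamma_i\hookrightarrow\R$ is a real covector whose positive dual direction gives the ray $\rho_i \bmod \ell_{i-1}$. Your appeal to Hahn's theorem does not rescue this, because Hahn only tells you that the archimedean composition factors embed in $\R$; it does not make them isomorphic to $\Z$, and the embedding data (the slope, so to speak) is precisely what the filtration $(\ell_i^\perp)$ discards. You should rewrite $\Psi$ to record, at each step, the real covector rather than a sublattice-with-sign, and then check $\Phi\circ\Psi=\mathrm{id}$ using that covector. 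In partial defence, the paper's own sentence claiming the convex composition factors are ``$\pm\rho_i^\perp/\rho_{i+1}^\perp\cong\Z$'' already carries the same imprecision --- that identity holds only when every $\rho_i$ is rational --- so you may have been led astray by the source. But the proposition itself is stated for arbitrary surjections onto totally ordered groups, and these include the irrational case, so the gap has to be closed for the proof to stand.

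A second, smaller point: the geometric check that $\rho_i$ lands in $\Lambda^\vee_{\Delta,y}$ needs care when $\rho_i$ is irrational and sits on the boundary of the (rational) tangent cone --- strict rational separation is not automatic there, and the paper's notion of a flag being ``contained in $\Delta$'' is jet-theoretic rather than literal, so the statement you want to contradict is slightly weaker than ``a generator lies outside the cone.'' This is worth spelling out, though it is a refinement rather than a hole.
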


Thus oriented flags can be thought of as higher rank valuations of $\sh O\{\Delta\}$.

The set  of oriented flags at $y_\mathrm{II}$, where $d_\mathrm{II}$ is allowed to vary, carries a natural partial order by containment of flags. The unique zero-flag is the minimum element of $\sh J_y\Delta$. We will see that this partial order is precisely the specialisation order of $b^{-1}y$.

\subsection{The set of points}

The definition of the point-set topology of $X_\Delta$ is designed so that the conclusion of Stone's theorem holds. As such, its set of points may be identified with the set of \emph{prime filters} of quasi-compact open subsets of $X_\Delta$. We may identify this set in finitary terms.

A filter $\varphi$ being prime means that if a set $U\in\varphi$ can be decomposed as the union $U_1\cup U_2$ of two subsets, then either $U_1$ or $U_2\in\varphi$. Since every quasi-compact open subset of $X_\Delta$ is the union of finitely many polyhedra, it follows:

\begin{lemma}Every prime filter of $\sh U_{/X_\Delta}^\mathrm{qc}$ is generated by polyhedra.\end{lemma}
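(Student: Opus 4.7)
The plan is a short direct application of primeness. Let $\varphi$ be a prime filter on $\sh U_{/X_\Delta}^\mathrm{qc}$, and fix any $U \in \varphi$. By the remark immediately preceding the lemma, $U$ can be written as a finite union $U = \Delta_1 \cup \cdots \cup \Delta_n$ of sub-polyhedra of $\Delta$; each $\Delta_i$ is itself affine (hence quasi-compact) open in $X_\Delta$ by the description of affine open immersions in \S\ref{AFF_COLL}, so it genuinely belongs to the ambient lattice $\sh U_{/X_\Delta}^\mathrm{qc}$ on which $\varphi$ is defined.

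Next I would induct on $n$. The base case $n=1$ is immediate, since $U$ is itself a polyhedron. For the inductive step, primeness applied to the decomposition $U = \Delta_1 \cup (\Delta_2 \cup \cdots \cup \Delta_n)$ forces one of the two open subsets on the right to lie in $\varphi$; applying the inductive hypothesis to whichever summand is in $\varphi$ produces some $\Delta_i \in \varphi$ with $\Delta_i \subseteq U$.

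This shows that every $U \in \varphi$ contains a polyhedron that is itself in $\varphi$, which is exactly the assertion that the subcollection of polyhedra in $\varphi$ generates $\varphi$ as a filter. There is no real obstacle: the argument is the standard primality-to-irreducibility step, made available entirely by the combinatorial input that every quasi-compact open of $X_\Delta$ admits a finite cover by sub-polyhedra.
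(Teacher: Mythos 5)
Your argument is correct and is exactly the one the paper leaves implicit: the sentence preceding the lemma states primeness in the binary-union form and that every quasi-compact open of $X_\Delta$ is a finite union of polyhedra, and the lemma is stated to "follow" — the implicit step being precisely your induction on the number of polyhedra. No difference in approach.
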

Moreover, since one can check at the level of $\Delta(\R_\infty)$ whether a collection of polyhedra covers a given subset:
\begin{lemma}If $\varphi\in X_\Delta$, the filter of polyhedra containing $\varphi$ is prime.\end{lemma}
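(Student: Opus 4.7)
The plan is to verify directly that the collection $\varphi^\square := \{\Delta' \text{ sub-polyhedron of } \Delta : X_{\Delta'} \in \varphi\}$ satisfies the three defining axioms of a prime filter — upward closure, closure under finite intersection, and primeness — on the poset of sub-polyhedra of $\Delta$, by translating each back to the corresponding property of the prime filter $\varphi$ on qc open subsets of $X_\Delta$. The argument is essentially mechanical; the single piece of real content is the matching of combinatorial polyhedral decompositions with topological open covers.

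First I would recall that every sub-polyhedron $\Delta' \subseteq \Delta$ defines an affine, and in particular quasi-compact, open subset $X_{\Delta'} \subseteq X_\Delta$ (cf.\ the remarks on open sets following corollary \ref{AFF_COLL_THM}), so that $\varphi^\square$ is a well-defined subset of $\varphi$ regarded as a collection of polyhedra. Upward closure is then immediate: an inclusion $\Delta_1 \subseteq \Delta_2$ of sub-polyhedra gives an inclusion $X_{\Delta_1} \subseteq X_{\Delta_2}$ of qc open subsets, and $\varphi$ is upward closed. Closure under intersection follows because the intersection of two rational sub-polyhedra is again a rational sub-polyhedron — defined by concatenating the two lists of inequalities — whose associated affine open is the intersection of the original two, and $\varphi$ is closed under finite intersections.

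For primeness, suppose $\Delta' \in \varphi^\square$ decomposes as a union $\Delta' = \Delta_1 \cup \Delta_2$ of sub-polyhedra. The key step is to invoke the covering criterion of corollary \ref{AFF_COLL_THM}: the set equality $\Delta'(\Q H) = \Delta_1(\Q H) \cup \Delta_2(\Q H)$ ensures that $\{X_{\Delta_i}\}_{i=1,2}$ is a covering of $X_{\Delta'}$ in $\Sh\mathbf{Rig}$, hence $X_{\Delta'} = X_{\Delta_1} \cup X_{\Delta_2}$ as an open cover of the Riemann--Zariski space. Primeness of $\varphi$ then forces $X_{\Delta_1} \in \varphi$ or $X_{\Delta_2} \in \varphi$, i.e.\ $\Delta_1 \in \varphi^\square$ or $\Delta_2 \in \varphi^\square$. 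This final translation — from the combinatorial union to a genuine open cover of the Riemann--Zariski space — is the only non-formal step; everything else is inherited from $\varphi$ by direct transcription.
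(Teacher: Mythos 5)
Your proof is correct and follows exactly the route the paper indicates: the paper offers only the one-line hint that coverings by polyhedra are detected on $\Delta(\R_\infty)$ (equivalently on $\Q H$-points via lemma \ref{AFF_COVERING} or the criterion in corollary \ref{AFF_COLL_THM}), and your primeness step is a correct unpacking of that hint. One small point worth making explicit: for closure under meets you appeal to the fact that $\Delta_1\cap\Delta_2$ is again a sub-polyhedron, which by the paper's convention ($\emptyset$ is not a polyhedron) requires the intersection to be nonempty — this holds because $\varphi$, being a point, is a \emph{proper} prime filter, so $X_{\Delta_1}\cap X_{\Delta_2}=X_{\Delta_1\cap\Delta_2}\in\varphi$ forces $\Delta_1\cap\Delta_2\neq\emptyset$.
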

We have therefore reduced the problem to describing the set of prime filters of polyhedra in the Hausdorff quotient $\Delta(\R_\infty)$.

Let me introduce the temporary notation $\sh J\Delta$ for the set of all pairs $(y,(y_\mathrm{II},\rho_\bullet))$ of a point $y\in\Delta(\R_\infty)$ and an oriented flag at the rational part $y_\mathrm{II}$. The union of two sub-polyhedra contains such a pair if and only if one of the two constituents does. This produces a map from $\sh J\Delta$ to the set of prime filters of polyhedra.

\begin{prop}\label{blarg}The induced map $\sh J\Delta\rightarrow X_\Delta$ is bijective.\end{prop}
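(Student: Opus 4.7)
The plan is to split the verification into three independent tasks: showing that the assignment $(y,\rho_\bullet)\mapsto\varphi_{y,\rho}$ really does produce a prime filter of polyhedra, showing injectivity by separating pairs with small polyhedra, and showing surjectivity by extracting the pair from a prime filter.

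First, I would pin down the map. Given $(y,\rho_\bullet)\in\sh J\Delta$, let $\varphi_{y,\rho}$ be the collection of sub-polyhedra $\Delta^\prime\subseteq\Delta$ containing $y$ and such that $\rho_\bullet$, seen as a flag of half-spaces in $\Lambda^\vee_{\Delta,y_{\mathrm{II}}}$, has image contained in $\Lambda^\vee_{\Delta^\prime,y_{\mathrm{II}}}$; equivalently, such that the valuation $v_{y,\rho}:\Aff_\Delta(N,\Z)\rightarrow\Gamma$ of the preceding proposition is non-positive on $\Aff^+_{\Delta^\prime}(N,\Z)$. It is a filter because the valuation condition and point containment both survive finite intersection. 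To see primality, suppose $\Delta^\prime\in\varphi_{y,\rho}$ decomposes as a union of two sub-polyhedra $\Delta_1\cup\Delta_2$. Such a decomposition is cut out, locally near $y_{\mathrm{II}}$, by finitely many rational affine inequalities; each such inequality has a well-defined sign under $v_{y,\rho}$, and the resulting choice of sign on every bisecting hyperplane determines which of $\Delta_1,\Delta_2$ receives the flag. Together with the observation that $y$, as a single point, already lies in one of the two constituents, this gives the primality.

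Injectivity is then combinatorial. If $(y,\rho_\bullet)\neq(y^\prime,\rho^\prime_\bullet)$ and $y\neq y^\prime$ in $\Delta(\R_\infty)$, the compact Hausdorff space $\Delta(\R_\infty)$ (S\ref{POLY_BOUNDARY}) admits rational polyhedra separating them. If instead $y=y^\prime$ but the flags differ, the associated valuations $v_{y,\rho},v_{y,\rho^\prime}$ disagree on some $F\in\Aff_\Delta(N,\Z)$ in opposite signs; then a suitably narrow polyhedron cut out near $y$ by an inequality of the form $F\leq 0$ (enlarged by irrelevant generators to lie in $\Delta$) lands in exactly one of the two filters.

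Surjectivity is where the work lies, and I expect this to be the main obstacle. Given a prime filter $\varphi$, by the quasi-compactness of $\Delta(\R_\infty)$ and the primality we first extract a unique point $y\in\Delta(\R_\infty)$ as the unique element of $\bigcap_{\Delta^\prime\in\varphi}\Delta^\prime(\R_\infty)$: non-emptiness is compactness, uniqueness is primality applied to any rational polyhedral separation of two candidate limits. Replacing $\Delta$ with the smallest infinite face containing $y$ and quotienting by the rational span of $y$, we reduce to $y$ purely of type II and must build the flag $\rho_\bullet$. I would do this inductively: at the $i$-th step, $\varphi$ has chosen a side of every rational hyperplane in $\Lambda^\vee_{\Delta,y_{\mathrm{II}}}/\rho_i^\perp$ passing through $y$ (by primality applied to the bisection of a small neighbourhood), and these choices are consistent in the sense that they assemble into a half-space $\rho_{i+1}$ with boundary $\pm\rho_i$. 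The maximality of this inductive process --- i.e., that the flag is complete up to the dimension $d_{\mathrm{II}}$ --- follows from the fact that the sub-polyhedra of $\Delta$ through $y$ generate (via intersections and bisections) all rational half-spaces in a neighbourhood. Once the pair $(y,\rho_\bullet)$ is constructed, equality $\varphi=\varphi_{y,\rho}$ is forced on both sides by what has already been verified. The delicate point will be showing that the inductive choice of half-space at each step is genuinely forced by $\varphi$ rather than requiring additional data, which amounts to verifying that primality is strong enough to distinguish all rational directions at $y_{\mathrm{II}}$.
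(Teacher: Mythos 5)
Your proof is correct and uses the same underlying strategy as the paper: the point is extracted as $\bigcap_{U\in\varphi}U$ via compactness and primality, the flag is built as a forced choice of half-spaces, and injectivity comes from the separation of distinct flags by polyhedra. The paper's own proof is extremely compressed --- it asserts the existence of a length-zero flag, defers uniqueness to a one-line separation lemma, and leaves the inductive flag construction together with the verification that $\varphi = \varphi_{y,\rho}$ entirely implicit --- so your explicit treatment of surjectivity, which you correctly identify as the substance of the proposition, spells out exactly the step the paper waves through with an ``of course''.
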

\begin{proof}This is the statement that every prime filter contains a unique oriented flag. Of course, every prime filter contains at least the length zero flag $\bigcap_{U\in\varphi}U$; the unicity reduces to the following elementary fact:

\begin{lemma}Elements of $\sh J\Delta$ are separated by polyhedra.\end{lemma}
In other words, for any two distinct flags we can find a pair of filters each containing just one of the points.\end{proof}

\begin{defn}Let $x\in X_\Delta$. The types I and III components $d_\mathrm{I},d_\mathrm{III}$ of $x$ are defined according to those of $bx$. The \emph{type II codimension} or \emph{residue height} of $x$ is the length $d_\mathrm{II}$ of the flag $\rho_\bullet x\in\sh J_{bx}\Delta$ at $bx_\mathrm{II}$ corresponding to $x$ under $X_\Delta\rightarrow\sh J\Delta$.

We say in this case that $x$ is of type $(d_\mathrm{I}|d_\mathrm{II}|d_\mathrm{III})$.\end{defn}

\subsection{Structure of the fibres of the Hausdorff quotient}

Proposition \ref{blarg} identifies the closed subspace $b^{-1}y\subseteq\mathrm{sk}\Delta$, as a set, with $\sh J_y\Delta$. Moreover, the partial order of the set of filters by inclusion restricts, under this correspondence, to the partial order of weak jets by inclusion. In other words, this is also the specialisation order of $b^{-1}y$, as promised.

The irreducible closed subsets of $\sh J_y\Delta$ are therefore of the form
\[ V_\rho:=\{ \rho^\prime | \rho\subseteq\rho^\prime \} \]
with $\rho\in\sh J_y\Delta$.


The embedding $b^{-1}y\hookrightarrow X_\Delta$ can be upgraded to an affine morphism of analytic spaces by equipping the left hand side with the pullback of the structure sheaf. With this structure, at least when $y$ is purely of type II, $\sh J_y\Delta$ is the spectrum of the pair $(A;A^+)$ whose ring of integers is the $\F_1$-algebra associated to
\[ \mathrm{Aff}^+_{\Delta,y}(N,\Z) :=  \{f\in\mathrm{Aff}_\Delta(N,\Z)| f\leq 0\text{ in a neighbourhood of }y\}. \]
Note that this is not, of course, a Banach algebra topology.

\bibliographystyle{alpha}
\bibliography{polyb}

\begin{thebibliography}{Mum72}

\bibitem[Abb10]{Abbes}
A.~Abbes.
\newblock {\em \'El\'ements de g\'eom\'etrie rigide}.
\newblock Birkh{\"a}user, 2010.

\bibitem[Ber93]{Berketale}
V.~Berkovich.
\newblock {\'E}tale cohomology for non-\text{A}rchimedean analytic spaces.
\newblock {\em {Publications math\'ematiques de l'IH\'ES}}, 78, 1993.
\newblock \href{http://www.numdam.org/item?id=PMIHES_1993__78__5_0}{Numdam}.

\bibitem[Del92]{Deligne}
P.~Deligne, 1992.
\newblock letter to {M. Raynaud}.

\bibitem[EKL07]{Kap}
M.~Einsiedler, M.~Kapranov, and D.~Lind.
\newblock Non-{A}rchimedean amoebas and tropical varieties.
\newblock {\em Journal f{\"u}r die reine und angewandte Mathematik}, 601, 2007.
\newblock arXiv \href{http://arxiv.org/abs/math/0408311}{0408311}.

\bibitem[FC90]{FaltingsChai}
G.~Faltings and C.~Chai.
\newblock {\em Degeneration of {A}belian varieties}.
\newblock Springer, 1990.

\bibitem[FG83]{Goldman2}
D.~Fried and W.~M. Goldman.
\newblock Three-dimensional affine crystallographic groups.
\newblock {\em Advances in Mathematics}, 47, 1983.
\newblock \href{http://www2.math.umd.edu/~wmg/publications.html}{online}.

\bibitem[FK13]{FujiKato}
K.~Fujiwara and F.~Kato.
\newblock Foundations of rigid geometry {I}.
\newblock arXiv \href{http://arxiv.org/abs/1308.4734}{1308.4734}, 2013.

\bibitem[GH84]{Goldman1}
W.~Goldman and M.~W. Hirsch.
\newblock The radiance obstruction and parallel forms on affine manifolds.
\newblock {\em Trans. Am. Math. Soc.}, 286, 1984.
\newblock
  \href{http://www.ams.org/journals/tran/1984-286-02/S0002-9947-1984-0760977-7%
/}{online}.

\bibitem[Gro60]{EGA}
A.~Grothendieck.
\newblock {\em \'El\'ements de g\'eometrie alg\'ebrique}.
\newblock IH{\'E}S, 1960.

\bibitem[GS03]{GrSi1}
M.~Gross and B.~Siebert.
\newblock Mirror symmetry via logarithmic degeneration data, \text{I}.
\newblock {\em \url{http://arxiv.org/abs/math/0309070}}, 2003.

\bibitem[Hit97]{Hitchin}
N.~Hitchin.
\newblock The moduli space of special \text{L}agrangian submanifolds.
\newblock {\em \url{http://arxiv.org/abs/dg-ga/9711002}}, 1997.

\bibitem[Hub96]{Hubook}
R.~Huber.
\newblock {\em {\'E}tale cohomology of rigid analytic varieties and adic
  spaces}.
\newblock Aspects of mathematics. Springer Vieweg, 1996.

\bibitem[KS06]{KoSo2}
M.~Kontsevich and Y.~Soibelman.
\newblock Affine structures and non-{A}rchimedean analytic spaces.
\newblock In {\em The unity of mathematics}. Springer, 2006.
\newblock arXiv \href{http://arxiv.org/abs/math/0406564}{0406564}.

\bibitem[Mac15]{part1}
A.~W. Macpherson.
\newblock Affine manifolds are rigid analytic spaces in characteristic one,
  {I}: formal toric geometry.
\newblock 2015.

\bibitem[Mum72]{Mumford}
D.~Mumford.
\newblock An analytic construction of degenerating abelian varieties over
  complete rings.
\newblock {\em Compositio Mathematica}, 24(3), 1972.

\bibitem[Ray74]{Raynaud}
M.~Raynaud.
\newblock G\'eom\'etrie analytique rigide d'apr\`es {T}ate, {K}iehl.
\newblock {\em M\'emoires de la S. M. F.}, 39-40, 1974.
\newblock
  \href{http://www.numdam.org/numdam-bin/fitem?id=MSMF_1974__39-40__319_0}{num%
dam}.

\bibitem[SYZ96]{SYZ}
A.~Strominger, S.-T. Yau, and E.~Zaslow.
\newblock Mirror symmetry is \text{T}-duality.
\newblock {\em \url{http://arxiv.org/abs/hep-th/9606040}}, 1996.

\end{thebibliography}

\end{document}